\numberwithin{equation}{section}
\numberwithin{figure}{section}
\theoremstyle{plain}
\newtheorem{thm}{Theorem}
  \theoremstyle{plain}
  \numberwithin{thm}{section}
  \newtheorem{cor}[thm]{Corollary}
  \theoremstyle{plain}
  \newtheorem{lem}[thm]{Lemma}
  \theoremstyle{remark}
  \newtheorem{rem}[thm]{Remark}
    \theoremstyle{remark}
   \theoremstyle{plain}
  \newtheorem{prop}[thm]{Proposition}
  \def\Ddots{\mathinner{\mkern1mu\raise\p@
\vbox{\kern7\p@\hbox{.}}\mkern2mu
\raise4\p@\hbox{.}\mkern2mu\raise7\p@\hbox{.}\mkern1mu}}
\newtheorem{definition}{Definition}
\numberwithin{definition}{section}
   \theoremstyle{remark}
\newtheorem*{rem*}{Remark}
\newcounter{anhang}
\newcommand{\norm}[1]{\left\| #1 \right\|}
\newcommand{\mklm}[1]{\left\{ #1 \right\}}
\newcommand{\eklm}[1]{\left\langle #1 \right\rangle}
\renewcommand{\d}{\,d}
\newcommand{\N}{{\mathbb N}}
\newcommand{\Z}{{\mathbb Z}}
\newcommand{\C}{{\mathbb C}}
\newcommand{\R}{{\mathbb R}}
\newcommand{\D}{{\mathcal D}}
\newcommand{\E}{{\mathcal E}}
\newcommand{\F}{{\mathcal F}}
\newcommand{\I}{{\mathcal I}}
\newcommand{\J}{{ \mathcal J}}
\newcommand{\M}{{\mathscr M}}
\renewcommand{\O}{{\mathcal O}}
\newcommand{\W}{{\mathcal W}}
\renewcommand{\epsilon}{\varepsilon}
\newcommand{\zetaF}{\zeta}
\newcommand{\bdm}{\begin{displaymath}}
\newcommand{\edm}{\end{displaymath}}
\newcommand{\bq}{\begin{equation}}
\newcommand{\eq}{\end{equation}}
\newcommand{\bqn}{\begin{equation*}}
\newcommand{\eqn}{\end{equation*}}
\newcommand{\Cinft}{{\rm C^{\infty}}}
\newcommand{\CT}{{\rm C^{\infty}_c}}
\renewcommand{\S}{{\mathcal S}}
\newcommand{\SO}{\mathrm{SO}}
\newcommand{\g}{{\bf \mathfrak g}}
\newcommand{\h}{{\bf \mathfrak h}}
\renewcommand{\t}{{\bf \mathfrak t}}
\newcommand{\eps}{\varepsilon}
\newcommand{\Ad}{\mathrm{Ad}\,}
\renewcommand{\Im}{\mathrm{Im}\,}
\renewcommand{\Re}{\mathrm{Re}\,}
\newcommand{\vol}{\mathrm{vol}\,}
\newcommand{\Crit}{\mathrm{Crit}}
\DeclareMathOperator{\supp}{supp\,}
\DeclareMathOperator{\gd}{\partial}
\begin{document}
\title[Asymptotic expansion of generalized Witten integrals]{Asymptotic expansion of generalized Witten integrals\\ for Hamiltonian circle actions}

\author[B. K\"uster]{Benjamin K\"uster}
\email{bkuester@math.upb.de}
\address{Universit\"at Paderborn, Institut f\"ur Mathematik, Warburger Str.\ 100, 33098 Paderborn, Germany}

\author[P. Ramacher]{Pablo Ramacher}
\email{ramacher@mathematik.uni-marburg.de}
\address{Philipps-Universit\"at Marburg, Fachbereich Mathematik und Informatik, Hans-Meer\-wein-Str., 35032 Marburg, Germany}

\date{\today}

\keywords{Oscillatory integrals, singular symplectic reduction, Witten integral, equivariant cohomology}
\begin{abstract}
We derive a complete asymptotic expansion of generalized Witten integrals for Hamiltonian circle actions on arbitrary symplectic manifolds, characterizing the coefficients in the expansion as integrals over the symplectic strata of the corresponding Marsden-Weinstein reduced space and distributions on the Lie algebra.  The obtained coefficients involve singular contributions of the lower-dimensional strata related to numerical invariants of the fixed-point set.
\end{abstract}

\maketitle

\tableofcontents

\section{Introduction}

 Let $(M,\omega)$ be a $2n$-dimensional connected  symplectic manifold with a non-trivial  Hamiltonian action of a compact connected Lie group $G$ and momentum map $\J:M \rightarrow \g^\ast$, \label{Jintrod} where $\g^\ast$ denotes the dual of the Lie algebra $\g$ of $G$.  If $\zeta \in \g^\ast$ is a regular value of $\J$,  the corresponding \emph{Marsden-Weinstein symplectic quotient} or \emph{reduced space}
 $
 \M^\zeta:= \J^{-1}(\mklm{\zeta})/G
 $ 
 is a symplectic orbifold,  and if $\zeta$ is not a regular value,  $\M^\zeta$ is a stratified space which can have serious  singularities. The  geometry and topology  of $\M^\zeta$  have been extensively studied in the last decades \cite{lerman-sjamaar,kirwan84,witten92,kalkman95,lerman-tolman00,JKKW03} mostly for compact $M$, a major tool being  the Witten integral  and its asymptotic expansion, which carries important geometric and topological information.  
 
In this paper, we study Witten-type integrals in the case where $G=T:=\SO(2)\cong S^1$ is the circle group and $\zeta \in \t^\ast:=\g^\ast$ is not necessarily a regular value.  While we do not assume $M$ to be compact, we  consider compactly supported integrands. More precisely, we derive  a complete asymptotic expansion of \emph{generalized Witten integrals}  of the form
  \bq
\label{eq:2}
I_{a,\sigma}^\zeta(\eps):=\int_{\t}  \int_{ M } e^{i(\J(p)-\zeta)(x)/\eps} a(p) \d M(p)\sigma(x) \d x
\eq 
 for arbitrary $\zeta \in \t^\ast$ in integer powers of $\eps>0$, where  $\t:=\mathfrak{so}(2)$ is the Lie algebra of $T$ for which we fix an identification $\t\cong \R$, $a \in \CT(M)$ a compactly supported amplitude, $\sigma\in \S(\t)$ a Schwartz function on $\t$, $dM:=\omega^n/n!$ the symplectic volume form on $M$, and $dx$ the Lebesgue measure on $\t\cong \R$.  
 
 We regard 
 \bq
 I_{a,\sigma}^\zeta(\eps)=I^\zeta(\eps) ( a \otimes \sigma)\label{eq:Iasigmaeps}
 \eq
  as the evaluation of a distribution   $I^\zeta(\eps) \in (\D(M)\otimes \S(\t))'$ at the test function $a\otimes\sigma\in \D(M)\otimes \S(\t)$.  By \emph{complete asymptotic expansion} we mean an expansion of $I^\zeta(\eps)$ in  $(\D(M)\otimes \S(\t))'$ of the form
\bq
I^\zeta(\eps)\;\sim \;\eps^{j_0(\zeta)}\sum_{j=0}^{\infty} \eps^j A^\zeta_j,\qquad A^\zeta_j\in \D'(M)\otimes\S'(\t),\quad j_0(\zeta)\in \Z,\label{eq:exp000}
\eq
that is, for each $J_0 \in \N_0$ and each compact set $K\subset M$ there is an $N_{J_0,K,\zeta}\in \N_0$ and a family of differential operators $\{D_{J_0,K,\zeta}^l\}_{0\leq l \leq N_{J_0,K,\zeta}}$ on $M$ such that for all $\sigma\in \S(\t)$ and all $a\in \CT(M)$ with $\supp a\subset K$ one has
\bqn
\Big|I_{a,\sigma}^\zeta(\eps)-\eps^{j_0(\zeta)}\sum_{j=0}^{J_0} \eps^j A^\zeta_j(a\otimes\sigma)\Big|\leq \sum_{k,l=0}^{N_{J_0,K,\zeta}}\norm{D_{J_0,K,\zeta}^la}_\infty \big\Vert\sigma^{(k)}\big\Vert_\infty\eps^{j_0(\zeta)+J_0+1}.
\eqn
Here $\D(M)$ denotes the space of test functions on $M$, given by $\CT(M)$ with the test function topology; its dual $\D'(M)$ is the space of distributions\footnote{In this paper, we shall identify distributions with distribution densities on $M$ via the symplectic volume form $dM$, which defines a strictly positive smooth density on $M$.}   
 on $M$, and $\D'(M)\otimes\S'(\t)$ embeds into $(\D(M)\otimes \S(\t))'$. We are specially interested in the dependence of the coefficient distributions $A^\zeta_j$ and  the leading order
$$
\ell(\zeta):=j_0(\zeta)+\inf \big \{\, j\in \N_0 \mid  A_j^\zeta\neq 0\big \}
$$  on the parameter $\zeta\in \t^\ast$, which may be a  regular or singular value of $\J$. 

Distributions of the form $I^\zeta(\eps)$ arise in the study of the  Fourier transform  of \emph{Duistermaat-Heckman-type distributions}. The latter are tempered distributions  $L_\varrho\in \S'(\t)$ associated with a compactly supported equivariant differential form $\varrho$ on $M$, that is a polynomial map $\varrho:\t \to \Omega_c(M)^T$, by
\bq
L_\varrho(x):=\int_{M} e^{i(J(x)-\omega)} \varrho(x),\quad x\in \t,\qquad J(x)(p):=\J(p)(x),\quad p\in M.\label{eq:duistermaatheckman}
\eq
The connection between the integrals \eqref{eq:2} and the distributions  \eqref{eq:duistermaatheckman} is explained in detail in Section \ref{subsec:2.2}, where we also describe how the original integral studied by Witten \cite{witten92} arises as a special case of the generalized Witten integral \eqref{eq:2}. If $M$ is compact and $\varrho=1$, $L_\varrho$ corresponds to the inverse Fourier transform of the pushforward $\J_\ast(dM)$ of the symplectic volume form along $\J$. As was discovered by Duistermaat and Heckman  \cite{duistermaat-heckman82}, $\J_\ast(dM)$ is a piecewise polynomial measure on $\t^\ast$, or equivalently, $L_\varrho$ is exactly given by the leading term in the stationary phase approximation. This can be seen as a special instance of the localization formula of Berline-Vergne \cite{berline-vergne83, berline-getzler-vergne} and Atiyah-Bott \cite{atiyah-bott84}, one of the central  principles in equivariant cohomology.    For general $M$ and $\varrho$,  one would  expect  that the coefficients $A^\zeta_j$ in the expansion of $I^\zeta(\eps)$ are given by piecewise polynomial measures on $\t^\ast$ as well, and our results show that this is indeed the case.  Furthermore, as will be discussed below, these measures have a geometric meaning  in terms of the symplectic data of  $\M^\zeta$.

 If $\zeta \in \t^\ast$ is  a regular value of $\J$, the phase function $\psi^\zeta(p,x):=\J(p)(x)-\zeta(x)$  in \eqref{eq:2} is a \emph{Morse-Bott function} and the stationary phase theorem yields an expansion \eqref{eq:exp000}  of $I^\zeta(\eps)$ with $j_0(\zeta)=\ell(\zeta)=1$ and
 \bq
 A^\zeta_{0}(a\otimes\sigma)=2\pi \sigma(0)\int_{\mathscr M^\zeta}\eklm{a}_T\d \mathscr M^\zeta,\qquad A^\zeta_{j}(a\otimes\sigma)=\sigma^{(j)}(0)\int_{\mathscr M^\zeta}\big<D^\zeta_{j} a\big>_T\d \mathscr M^\zeta,\label{eq:regcoeff}
 \eq
 where $D^\zeta_{j}$ is a differential operator of order $j$ defined near   $\J^{-1}(\{\zeta\})$, $D^\zeta_{0}$ is just multiplication by $2\pi$, $\d \mathscr M^\zeta$ is the symplectic volume form on the orbifold $\M^\zeta=\J^{-1}(\{\zeta\})/T$, 
 and $\eklm{f}_T(T\cdot p):=\int_Tf(g\cdot p)\d g$ denotes the function on $M/T$ defined by integrating  $f\in \CT(M)$ over an orbit $T\cdot p\subset M$ using the Haar measure $dg$ on $T$ fixed by our identification $\t\cong \R$. Furthermore, $D^\zeta_{j}$ is  transversal to $\J^{-1}(\{\zeta\})$, and  the coefficients in \eqref{eq:regcoeff} depend smoothly on $\zeta$ in the sense that if $\mathscr V\subset \t^\ast$ is an open set consisting entirely of regular values of $\J$, then the function $\mathscr V\owns \zeta \to A^\zeta_{j}(a\otimes\sigma)\in \C$ is smooth for each $j$, $a$, and $\sigma$, see Proposition \ref{prop:asymptreg} for more details.
 
When $\zeta$ is a singular value of $\J$, serious difficulties arise in the study of the integrals \eqref{eq:2}, since then the stationary phase principle cannot be applied. Moreover, the behavior of the coefficients in the regular expansion \eqref{eq:regcoeff} as $\zeta$ approaches a singular value is  unclear a priori. In this paper, we address both of these  problems.  
In order to state our results, consider for an arbitrary $\zeta\in\t^\ast$ the 
stratification of the symplectic quotient $\M^\zeta=\J^{-1}(\{\zeta\})/T$ by infinitesimal orbit types
 \bq
\M^\zeta=  \M^\zeta_\mathrm{top} \sqcup  \M^\zeta_\mathrm{sing}, \qquad \M^\zeta_\aleph:=(\J^{-1}(\mklm{\zeta}) \cap M_{(\h_\aleph)})/T,\label{eq:mstrat}
\eq
 where $M_{(h_\aleph)}$ denotes the stratum of $M$ of infinitesimal orbit type $(\h_\aleph)$ with $\h_\mathrm{top}=\mklm{0}$ and  $\h_\mathrm{sing}=\t$.  $\M^\zeta_\mathrm{top}$  is  an orbifold called the \emph{top stratum}. It is either dense in $\M^\zeta$ or empty, which happens iff $T$ acts trivially on $\J^{-1}(\mklm{\zeta}$). The orbifold $\M^\zeta_\mathrm{top}$ inherits a symplectic form  $\omega^\zeta_\mathrm{top}$  uniquely characterized by $i^\ast \omega=\pi^\ast \omega^\zeta_\mathrm{top}$, where $i:\J^{-1}(\mklm{\zeta}) \cap M_{(\h_\mathrm{top})}\to M$ is the inclusion and $\pi:\J^{-1}(\mklm{\zeta}) \cap M_{(\h_\mathrm{top})} \rightarrow  \M^\zeta_\mathrm{top}$ the orbit projection. Writing $M^T$ for the space of fixed-points of the $T$-action on $M$ and $\F$ for the set of all connected components of $M^T$, \label{Fintrod}
 each $F\in \F$ is a symplectic submanifold of $(M,\omega)$ on which $\J$ is constant, and the singular values of $\J$ are    $\mklm{\J(F): F \in \F}\subset \t^\ast$. The space $\M^\zeta_\mathrm{sing}$ can be identified with  the union of all $F\in \F$ with $\J(F)=\zeta$. 
  Each $F\in \F$ provides certain numerical invariants of the Hamiltonian $T$-space $(M,\omega,\J)$.  The simplest is the codimension of $F$ in $M$, an even number denoted by  $\mathrm{codim}\,F$, which is non-zero thanks to our assumptions that the $T$-action on $M$ is non-trivial and that $M$ is connected. Moreover, the behavior of $\J$ near $F$ intrinsically determines two non-negative even integers $n^\pm_F$ fulfilling $n^+_F+n^-_F=\mathrm{codim}\,F$. Technically, $n^+_F$ and $n^-_F$ arise as the positive and negative indices of inertia of some non-degenerate quadratic form $Q_F$ on $\R^{\mathrm{codim}\,F}$ assigned to $F$, see Section \ref{subsec:4.1}.  We therefore call $F$ \emph{definite} with sign $s_F\in \{+,-\}$ if $n^{s_F}_F=\mathrm{codim}\,F$ and \emph{indefinite} otherwise. With these preparations, we can state our first main result, proved in Section \ref{sec:proof}. 
\begin{thm}\label{thm:main1}
For each $\zeta\in \t^\ast\cong \R$, the generalized Witten integral \eqref{eq:2} has an asymptotic expansion
$$I^\zeta(\eps)\;\sim\;\eps\sum_{j=0}^{\infty} \eps^j A^\zeta_j
$$
in $(\D(M)\otimes\S(\t))'$ with coefficient distributions  of the form
 $$
 A^\zeta_j=(A^\zeta_j)_\mathrm{top} + (A^\zeta_j)_\mathrm{sing}
 $$
 given by
 \bq
 (A^\zeta_j)_\mathrm{top}(a\otimes\sigma)=\sigma^{(j)}(0)\int_{\mathscr M^\zeta_{\mathrm{top}}}\big<D^\zeta_{j} a\big>_T\d \mathscr M^\zeta_{\mathrm{top}},\qquad 
 (A^\zeta_j)_\mathrm{sing}(a\otimes\sigma)=\sum_{\substack{F\in \F:\J(F)=\zeta,\\F\cap \supp a\neq \emptyset}} A_{j,F}(a\otimes\sigma),\label{eq:distrib1}
 \eq
 where $A_{j,F}=0$ unless $j\geq \frac{1}{2}\mathrm{codim}\, F-1$, in which case one has 
 \bqn
 A_{j,F}(a\otimes\sigma)=\begin{dcases} 
 \sigma^{[j]}_{\mp}(0)\int_{F}D_{j,F} a\d F, & F\text{ definite},\; s_F=\pm,\\
  \sigma^{[j]}_{+}(0)\int_{F}D^+_{j,F} a\d F+\sigma^{[j]}_{-}(0)\int_{F}D^-_{j,F} a\d F, &F\text{ indefinite}.
 \end{dcases}
 \eqn
 The objects occurring here are as follows:
\begin{itemize}[leftmargin=*]
 \item  $\S(\t)\owns\sigma\mapsto \sigma^{[j]}_{\pm}(0)\in \C$ are tempered distributions defined by
 \bq
 \sigma^{[j]}_{\pm}(0):=\frac {i^j}{2\pi} \eklm{ \xi^j_\pm, \hat \sigma}:=\frac{(\pm i)^j}{2\pi}\int_0^\infty\hat\sigma(\pm \xi)\xi^{j}\d \xi,\qquad j\in \N_0,\label{eq:distrib0}
 \eq 
 where we use our identification $\R\cong \t^\ast$ and  $\hat \sigma$ is the Fourier transform of $\sigma$, normalized such that $$\sigma^{[j]}_+(0)+\sigma^{[j]}_-(0)=\sigma^{(j)}(0);$$
\item $D^\zeta_{j}$ is a differential operator of order $j$ defined on a neighborhood of $\J^{-1}(\{\zeta\})\cap M_{(\h_\mathrm{top})}$ in $M_{(\h_\mathrm{top})}$, transversal to $\J^{-1}(\{\zeta\})\cap M_{(\h_\mathrm{top})}$, and for $j=0$ one has $D^\zeta_{0}=2\pi$;
\smallskip
 \item  $D_{j,F}$ (in the definite case) and $D^\pm_{j,F}$ (in the indefinite case) are $\zeta$-independent differential operators of order $2j+2-\mathrm{codim}\, F$ defined on a neighborhood of $F$ in $M$, and for the lowest index $j=\frac{1}{2}\mathrm{codim}\, F-1$ these operators equal the following constants: 
\bq\begin{split}
 D_{\mathrm{codim}\, F/2-1,F}&= 2^{\mathrm{codim}\, F/2-1} C_F,\qquad C_F=(2\pi)^2\frac{(\pi i)^{\mathrm{codim}\, F/2-1}}{|\lambda_1^F\cdots \lambda_{\mathrm{codim}\, F/2}^F|(\mathrm{codim}\, F/2-1)!},\\
 D^\pm_{\mathrm{codim}\, F/2-1,F}&= N^\pm_F C_F,\qquad\qquad\quad\; N^\pm_F\in \Z\setminus\{0\}, \label{eq:constantsDF}\end{split}
 \eq
 where  $\lambda_1^F,\ldots,\lambda_{\mathrm{codim}\, F/2}^F\in \Z\setminus \{0\}$ are the weights of the fiber-wise $T$-action on the symplectic normal bundle of $F$ in $M$, see Section \ref{subsec:4.1}, and the non-zero integers $N_F^\pm$ are explicitly determined by the invariants $n^+_F$ and  $n^-_F$, see \eqref{eq:Nvalues};
  \item $\d \M^\zeta_\mathrm{top}:=(\omega^\zeta_{\mathrm{top}})^{n-1}/(n-1)!$ and $\d F:= \omega^{\dim F/2}/(\dim F/2)!$ are the symplectic volume forms.
 \end{itemize}
 In particular, the leading order of the asymptotic expansion is given by
 \[
\ell(\zeta)=\begin{cases}1,\qquad & \M^\zeta_\mathrm{top}\neq \emptyset,\\
\inf\{\mathrm{codim}\, F/2:F\in \F,\;\J(F)=\zeta\}, & \M^\zeta_\mathrm{top}= \emptyset.
\end{cases} 
 \]
 Furthermore, the operators $D^\zeta_j$, $D_{j,F}$, and $D^\pm_{j,F}$ are natural in the following sense: if $(M',\omega',\J')$ is another Hamiltonian $T$-space and $\Phi:M\to M'$ an isomorphism of Hamiltonian $T$-spaces, then the above statements hold for $(M',\omega',\J')$ with  the operators
\[
(D^\zeta_j)':=\Phi_\ast D^\zeta_j, \qquad (D_{j,F'})':=\Phi_\ast D_{j,\Phi^{-1}(F')},  \qquad (D^\pm_{j,F'})':=\Phi_\ast D^\pm_{j,\Phi^{-1}(F')},\qquad F'\in \F',
\]
where we use the notation $\Phi_\ast D(f):=D(f\circ \Phi)$ for a differential operator $D$ defined on an open subset $U\subset M$ and $f\in \Cinft(\Phi(U))$, and $\F'=\{\Phi(F):F\in \F\}$ is the set of connected components of $M'^T$.

 \end{thm}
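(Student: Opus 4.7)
The plan is to localize $I^\zeta_{a,\sigma}(\eps)$ around the critical set of the phase $\psi^\zeta(p,x)=(\J(p)-\zeta)(x)$ on $M\times\t$, which is the set of $(p,x)$ with $\J(p)=\zeta$ and $x\in\t_p$. For a circle action this set decomposes into a \emph{regular} stratum $(\J^{-1}(\mklm{\zeta})\setminus M^T)\times\mklm{0}$ of codimension $2$ in $M\times\t$, and a \emph{singular} stratum $F\times\t$ for each $F\in\F$ with $\J(F)=\zeta$. I would choose a $T$-invariant partition of unity $1=\phi_{\mathrm{reg}}+\sum_{F:\J(F)=\zeta}\phi_F$ near $\supp a$, with each $\phi_F$ supported in a $T$-invariant tubular neighborhood of $F$ and $\phi_{\mathrm{reg}}$ vanishing near every such $F$, and treat each summand of $I^\zeta_{a,\sigma}(\eps)$ separately.

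For the regular piece $I^\zeta_{\phi_{\mathrm{reg}}a,\sigma}(\eps)$, repeated integration by parts in $x$ combined with the Schwartz decay of $\sigma$ shows that points with $\J(p)\neq\zeta$ contribute $O(\eps^\infty)$. Near a point $p\in\J^{-1}(\mklm\zeta)\setminus M^T$ the action is locally free, so one can pass to $T$-invariant local coordinates in which $y_1\in\R$ is the moment-map coordinate, reducing the phase to $\psi^\zeta=xy_1$. This is a clean non-degenerate critical situation of signature zero in $(x,y_1)$, and the standard Morse-Bott stationary-phase expansion (with the remaining slice and orbit coordinates as parameters) produces the expansion with coefficients $(A^\zeta_j)_{\mathrm{top}}$, the leading term being $2\pi\sigma(0)\int_{\M^\zeta_{\mathrm{top}}}\eklm{a}_T\,d\M^\zeta_{\mathrm{top}}$, in agreement with the regular-value formula \eqref{eq:regcoeff}.

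For each singular piece $I^\zeta_{\phi_F a,\sigma}(\eps)$, the equivariant symplectic normal form of Marle--Guillemin--Sternberg identifies a $T$-invariant neighborhood of $F$ with a neighborhood of the zero section in the symplectic normal bundle $\nu\to F$, which splits as $\bigoplus_{j=1}^{k}L_j$ with $L_j$ the $\lambda^F_j$-weight complex line bundle and $k=\tfrac12\mathrm{codim}\,F$, and in which the moment map becomes $\J(f,v)=\zeta+\tfrac12\sum_j\lambda^F_j\|v_j\|^2$. After a further partition of unity on $F$ trivializing $\nu$ locally as $U\times\C^k$ and the rescaling $v=\sqrt{\eps}\,w$, the integral takes the form
\[
\eps^{k}\!\int_\t\!\int_U\!\int_{\C^k}e^{i(x/2)\sum_j\lambda^F_j|w_j|^2}\tilde a(f,\sqrt{\eps}\,w,x)\,dw\,df\,\sigma(x)\,dx
\]
modulo smooth amplitude factors. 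Taylor-expanding $\tilde a$ in $\sqrt{\eps}\,w$ and exploiting rotational symmetry of the $w$-Gaussian in each complex coordinate, only monomials $\prod_j|w_j|^{2m_j}$ survive, and the $\C^k$-integrals reduce to explicit Fresnel expressions of the form $\prod_j 2\pi\,i\,\mathrm{sgn}(x\lambda^F_j)/|x\lambda^F_j|^{m_j+1}$. This produces the leading order $\eps^{k}=\eps^{\mathrm{codim}\,F/2}$ and, more generally, contributions $\eps^{k+m}$ at each $m\geq 0$, explaining the lower bound $j\geq\tfrac12\mathrm{codim}\,F-1$ in the statement.

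The outstanding $x$-integrals $\int_\t\sigma(x)\,x^{-l}\,dx$ are meaningful only distributionally and are interpreted via the $\pm i0$ prescription determined by the signs of the $\lambda^F_j$. Writing $(x\mp i0)^{-l}$ in terms of its Fourier transform and splitting $\hat\sigma$ into its restrictions to positive and negative frequencies identifies these regularized pairings with the distributions $\sigma^{[j]}_\pm(0)$ of \eqref{eq:distrib0}; the global sign factor $i^k\prod_j\mathrm{sgn}\,\lambda^F_j$ then forces a definite $F$ with sign $s_F=\pm$ to contribute only to $\sigma^{[j]}_\mp(0)$, whereas an indefinite $F$ contributes to both $\sigma^{[j]}_\pm(0)$ with integer multipliers $N^\pm_F$ determined by the inertia indices $n^\pm_F$, and collecting constants yields the explicit form \eqref{eq:constantsDF} at the lowest index. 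Naturality of $D^\zeta_j,\,D_{j,F},\,D^\pm_{j,F}$ is automatic since they are built entirely from the $T$-equivariant symplectic normal form. The main obstacle will be the rigorous $\sqrt{\eps}$-scaling argument (ensuring uniform control of remainders in $\eps$ and in the seminorms of $a$ and $\sigma$) together with the bookkeeping of phases, signs and $\pm i0$ regularizations in the singular-stratum contribution; once this is handled, the rest reduces to routine Morse-Bott stationary phase and Taylor expansion.
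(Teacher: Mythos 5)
Your plan diverges from the paper's at the crucial step of handling the singular stratum. The paper's treatment of the $\phi_F$-localized pieces is based on a \emph{destratification} process: it passes to polar coordinates, applies Whitney's extension theorem to write the even spherical mean $S_F(r,s)=\mathscr{S}_F(r^2,s^2)$ with $\mathscr{S}_F$ smooth, substitutes $T=r^2$, $U=s^2$, then $u=U-T$, $t=T+U$ to \emph{linearize} the phase, performs the Fourier transform in the Lie algebra variable, splits the resulting $u$-integral at zero, and finally Taylor-expands the inner $t$-integral in $\eps$. Your plan replaces this with the more classical $\sqrt{\eps}$-rescaling $v=\sqrt{\eps}\,w$, Taylor expansion of the amplitude at $w=0$, evaluation of the resulting Fresnel integrals over $\C^k$, and a $\pm i0$-regularization of the outstanding $x$-integrals.

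There is a genuine gap in your approach. For an \emph{indefinite} $F$ (so $\mathrm{codim}\, F > 2$), the localized piece $I^\zeta_{\phi_F a,\sigma}(\eps)$ is \emph{not} of leading order $\eps^{\mathrm{codim}\,F/2}$; it has contributions at every order $\geq\eps^1$, including the top-stratum coefficients $\sigma^{(j)}(0)\int_{\M^{\J(F)}_{\mathrm{top}}}\langle D^{\mathrm{top}}_{j,F}(\chi_F a)\rangle_T\,\d\M^{\J(F)}_{\mathrm{top}}$ for all $j< \tfrac12\mathrm{codim}\,F-1$. These arise from the part of the regular critical cone $\{\eklm{Q_Fw,w}=0,\,w\neq 0,\,x=0\}$ that lies inside $\supp\phi_F$. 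Your $\sqrt{\eps}$-rescaled Taylor-Fresnel scheme loses exactly these contributions: the rescaled Fresnel integrals $\int_{\C^k}e^{i(x/2)\eklm{Q_Fw,w}}\prod_j|w_j|^{2m_j}\,dw$ are only conditionally convergent, and the $\pm i0$ (oscillatory) regularization you invoke is \emph{not equivalent} to the natural truncation provided by the compactly-supported amplitude $\tilde a(\sqrt{\eps}\,w)$ (which scales the support out to $|w|\lesssim\eps^{-1/2}$). The discrepancy between these two regularizations lives precisely on the non-compact cone and is of order $\eps^{-(\mathrm{codim}\,F/2-1)}$ in the $w$-integral, so that after multiplying by the Jacobian $\eps^{\mathrm{codim}\,F/2}$ it yields the $\eps^1$-and-higher top contributions. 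Asserting that "this produces the leading order $\eps^{\mathrm{codim}\,F/2}$" for the $\phi_F$-piece is therefore incorrect in the indefinite case (and in the definite case it is correct only because the cone degenerates to $\{0\}$).

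In the paper this issue is circumvented by Fourier-transforming in $x$ \emph{before} Taylor-expanding; the substitution $u=U-T$, $t=U+T$ turns the cone into a lower integration limit $t\geq |u|$, and the Taylor expansion of the $t$-integral at $u=\eps v-\zeta$ then cleanly separates the bulk (cone/top-stratum) term from the boundary (singular) terms, with Whitney's lemma guaranteeing the smoothness needed for the expansion and Lemma \ref{lem:taylor} providing exact control of the boundary-derivative structure. If you wish to salvage the $\sqrt{\eps}$-rescaling route, you would have to dispense with the $\pm i0$ regularization and instead carry along the finite cutoff from $\supp\tilde a$ explicitly, tracking the boundary contribution at $|w|\sim\eps^{-1/2}$ — at which point you are essentially re-deriving the paper's boundary-term analysis in a less transparent coordinate system.
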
 
 \begin{rem*}\begin{enumerate}[leftmargin=*]
 \item If $\zeta$ is a regular value of $\J$, then Theorem \ref{thm:main1} reduces to the usual asymptotic expansion \eqref{eq:regcoeff} since $(A^\zeta_j)_\mathrm{sing}=0$ and $\mathscr M^{\zeta}_\mathrm{top}=\mathscr M^{\zeta}$. 
  \item The constants  in \eqref{eq:constantsDF} are non-zero, so the singular contributions $(A^\zeta_j)_\mathrm{sing}$ do occur in general.
  \item We emphasize that the distributions $(A^\zeta_j)_\mathrm{sing}$ depend on $\zeta$ only via the condition $\J(F)=\zeta$ in the sum in \eqref{eq:distrib1}; the individual distributions $A_{j,F}$ are independent of $\zeta$. 
\item  Note that the sum over all $F\in \F$ with $F\cap \supp a\neq \emptyset$ in \eqref{eq:distrib1} is finite because $\supp a$ is compact. Moreover, as each compact subset of $M$ intersects only finitely many connected components $F\in \F$ non-trivially, one has for each $j\in \N_0$ the convergence of distributions
 \[
 (A^\zeta_j)_\mathrm{sing}=\sum_{F\in \F:\J(F)=\zeta} A_{j,F} \qquad \text{in }\;\D'(M)\otimes\S'(\t),
 \]
 where the sum on the right hand side may be infinite because we do not assume $M$ to be compact.
\item The expressions $A^\zeta_j$ in the expansion of $I^\zeta(\eps)$ are given in terms of the piecewise polynomial measures $\xi^j_\pm\in \S'(\t^\ast)$, $j \in \N_0$. This was to be expected from the Duistermaat-Heckman theorem or, more generally, from the localization principle.  But since the latter only applies to equivariantly closed differential forms, while we are considering general amplitudes, we could not rely on localization. Also notice that  the remainder in the expansion of the  generalized Witten integral does not vanish in general -- this is an exclusive phenomenon for equivariantly closed differential forms and constitutes the essence of the localization principle. In fact,  localization implies that the expansion of Theorem  \ref{thm:main1}, when applied to the original Witten integral \eqref{eq:origWitten},  consists only of finitely many terms.

 \end{enumerate}
 \end{rem*}
 Theorem \ref{thm:main1} shows that the coefficients in the asymptotic expansion are sums of two qualitatively different terms: for each $\zeta\in\t^\ast$ there are \emph{regular contributions} $(A^\zeta_j)_\mathrm{top}$ of the same form as the coefficients in  \eqref{eq:regcoeff}, and there are \emph{singular contributions} $(A^\zeta_j)_\mathrm{sing}$, which are tensor products of distributions on $M$ supported in $\J^{-1}(\{\zeta\})\cap M^T$ and some mildly exotic tempered distributions on $\t$. In particular, there are \emph{singularly leading} terms associated with each fixed point set component $F\in \F$ fulfilling $\J(F)=\zeta$, occurring at  $j=\frac{1}{2}\mathrm{codim}\, F-1$. In the latter, the obtained distribution on the manifold is simply integration over $F$, up to a constant determined uniquely by the numerical invariants $n^+_F,n^-_F$ and the weights $\lambda_1^F,\ldots,\lambda_{\mathrm{codim}\, F/2}^F$. If $\zeta$ is a regular value, the singular contributions vanish. For general singular values of $\zeta$, both regular and singular contributions appear, and in the special case that $\M^\zeta_\mathrm{top}= \emptyset$, the regular contributions vanish and the singularly leading terms actually make up the leading term of the asymptotic expansion. Let us also emphasize that all coefficient distributions in the asymptotic expansion have a clear symplectic meaning given in terms of  the symplectic structure of the strata  of $\M^\zeta$.

Note that Theorem \ref{thm:main1} gives an asymptotic expansion for each individual $\zeta\in\t^\ast$ and makes no statement about the continuity of the obtained coefficients upon variations of $\zeta$. This question is dealt with in Section \ref{sec:proof2}, where we  prove the following statement on the (dis)continuity of the coefficients, our second main result. 
\begin{thm}\label{thm:main2}
For every $\zeta_0\in \t^\ast\cong \R$, $a\in \CT(M)$, $\sigma\in \S(\t)$, and $j\in \N_0$, one has 
 \begin{multline*}
\lim_{\substack{\zeta\to \zeta_0\\ \pm(\zeta-\zeta_0)>0}} A^\zeta_j(a\otimes\sigma)=\sigma^{(j)}(0)\bigg(\int_{\mathscr M^{\zeta_0}_{\mathrm{top}}}\big<D^{\zeta_0}_{j}a\big>_T\d \mathscr M^{\zeta_0}_{\mathrm{top}}\\
+\sum_{\substack{F\in \F: \J(F)=\zeta_0,\\
\mathrm{codim}\,F/2-1\leq j,\\
F \text{ \emph{indefinite}},\\
F\cap \supp a\neq \emptyset}} \int_{F}D^\mp_{j,F} a\d F+\sum_{\substack{F\in \F: \J(F)=\zeta_0,\\
\mathrm{codim}\,F/2-1\leq j,\\
F \text{ \emph{definite}},\; s_F=\pm,\\
F\cap \supp a\neq \emptyset}} \int_{F}D_{j,F} a\d F\bigg).
 \end{multline*}
\end{thm}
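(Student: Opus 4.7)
The plan is to localize around the finite set $\F_0 := \{F \in \F : \J(F) = \zeta_0,\ F \cap \supp a \neq \emptyset\}$ and reduce the problem to evaluating one-sided limits of the topological integral $\int_{\M^\zeta_\mathrm{top}} \eklm{D^\zeta_j a}_T\, d\M^\zeta_\mathrm{top}$ locally near each $F \in \F_0$. Choose a $T$-invariant partition of unity $1 = \chi_\infty + \sum_{F \in \F_0} \chi_F$ on $\supp a$, with each $\chi_F \in \CT(M)$ supported in a small $T$-invariant tubular neighborhood $U_F$ of $F$ disjoint from every $F' \in \F \setminus \{F\}$, and with $\chi_\infty$ vanishing near $\bigcup_{F \in \F_0} F$. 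By linearity,
\[
A^\zeta_j(a \otimes \sigma) = A^\zeta_j((\chi_\infty a) \otimes \sigma) + \sum_{F \in \F_0} A^\zeta_j((\chi_F a) \otimes \sigma),
\]
so it suffices to analyze each term separately.

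For the ``away'' part, by compactness of $\supp(\chi_\infty a)$ and the finiteness of $\{F' \in \F : F' \cap \supp a \neq \emptyset\}$, there exists $\delta > 0$ such that $\J(F') \neq \zeta$ for every $F' \in \F$ meeting $\supp(\chi_\infty a)$ and every $\zeta$ with $|\zeta - \zeta_0| < \delta$. Hence $(A^\zeta_j)_\mathrm{sing}((\chi_\infty a) \otimes \sigma) = 0$ for all such $\zeta$ (including $\zeta = \zeta_0$), and by the smooth dependence of the regular-case coefficients on $\zeta$ (Proposition~\ref{prop:asymptreg} applied to $\chi_\infty a$, which meets $\J^{-1}(\zeta)$ only in regular points for $|\zeta - \zeta_0| < \delta$),
\[
\lim_{\zeta \to \zeta_0} A^\zeta_j((\chi_\infty a) \otimes \sigma) = \sigma^{(j)}(0) \int_{\M^{\zeta_0}_\mathrm{top}} \eklm{D^{\zeta_0}_j \chi_\infty a}_T\, d\M^{\zeta_0}_\mathrm{top},
\]
which already accounts for the topological summand in the claimed formula.

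It remains to treat each near-$F$ contribution. Fix $F \in \F_0$. For $0 < |\zeta - \zeta_0| < \delta$, by construction of $\chi_F$ no $F' \in \F$ with $\J(F') = \zeta$ meets $\supp(\chi_F a)$, so
\[
A^\zeta_j((\chi_F a) \otimes \sigma) = \sigma^{(j)}(0) \int_{\M^\zeta_\mathrm{top}} \eklm{D^\zeta_j \chi_F a}_T\, d\M^\zeta_\mathrm{top}.
\]
I would then use the equivariant symplectic normal form near $F$ to coordinatize $U_F$ as $F \times \R^{\mathrm{codim}\,F}$ with a linearized $T$-action of weights $\lambda^F_i$, so that $\J = \zeta_0 + \tfrac{1}{2} Q_F$ on $U_F$. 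In these coordinates $\J^{-1}(\zeta) \cap U_F$ is the corresponding $Q_F$-level set: empty on one side of $\zeta_0$ and a sphere bundle over $F$ on the other if $F$ is definite, and a hyperboloid bundle on either side if $F$ is indefinite. Passing to $\M^\zeta_\mathrm{top}$ via the co-area formula and Taylor-expanding $\chi_F a$ in the normal directions, I would evaluate the limits $\zeta \to \zeta_0^\pm$ explicitly: they vanish on the empty side in the definite case and equal $\int_F D_{j,F} a\, dF$ on the non-empty side (definite, $s_F = \pm$), respectively $\int_F D^\mp_{j,F} a\, dF$ on each of the two sides (indefinite), matching the theorem. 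The main obstacle is precisely this explicit local computation: one must reproduce, with fixed $\zeta$ near $\zeta_0$ in place of varying $\eps$, the same kind of oscillatory-integral analysis as in the proof of Theorem~\ref{thm:main1}, and identify the resulting constants with the operators $D_{j,F}, D^\pm_{j,F}$ via \eqref{eq:constantsDF}. The matching of the factor $\sigma^{(j)}(0)$ on the right-hand side with the one-sided distributions $\sigma^{[j]}_\pm(0)$ entering $A_{j,F}$ in Theorem~\ref{thm:main1} is underpinned by the Plemelj-type identity $\sigma^{(j)}(0) = \sigma^{[j]}_+(0) + \sigma^{[j]}_-(0)$ from~\eqref{eq:distrib0}, which translates the ``one-sided'' singular contributions at $\zeta_0$ into a full derivative factor in the one-sided limits.
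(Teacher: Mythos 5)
Your overall strategy—localize with a partition of unity separating a neighborhood of each $F\in\F_0$ from the complement, use smooth dependence of the regular coefficients (Proposition \ref{prop:asymptreg}) for the away part, and evaluate the near-$F$ limits explicitly—is essentially the same skeleton as the paper's (which uses the partition $\{\chi_\mathrm{top},\chi_F\}_{F\in\F}$). But there is a genuine accounting error in the indefinite case. When $F$ is indefinite, $\J^{-1}(\{\zeta_0\})\cap U_F\cap M_{(\h_\mathrm{top})}$ is a nontrivial cone minus its vertex, so $\int_{\M^{\zeta_0}_\mathrm{top}}\langle D^{\zeta_0}_j(\chi_F a)\rangle_T\,d\M^{\zeta_0}_\mathrm{top}$ is generically nonzero. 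Your claim that the away part "already accounts for the topological summand" is therefore false: it only produces $\int_{\M^{\zeta_0}_\mathrm{top}}\langle D^{\zeta_0}_j(\chi_\infty a)\rangle_T$, and the missing piece must come from the near-$F$ limits. Correspondingly, your assertion that the near-$F$ limit in the indefinite case is just $\sigma^{(j)}(0)\int_F D^\mp_{j,F}a\,dF$ is wrong; the paper's proof (case (1) in Section \ref{sec:proof2}) shows that this limit equals $\sigma^{(j)}(0)\big(\int_{\M^{\zeta_0}_\mathrm{top}}\langle D^\mathrm{top}_{j,F}(\chi_F a)\rangle_T\,d\M^{\zeta_0}_\mathrm{top}+\int_F D^\mp_{j,F}a\,dF\big)$, with a top-stratum contribution that your sketch drops. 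As written, the two errors do not cancel.

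Beyond the accounting, the key technical step—the limit of the near-$F$ regular integral as $\zeta\to\zeta_0^\pm$—is left vague. The difficulty is that $D^\zeta_j$ depends on $\zeta$ in a way the regular stationary phase (Proposition \ref{prop:asymptreg}) does not make explicit near a singular value, and your "co-area formula plus Taylor expansion" sketch does not engage with this. The paper's idea, which you should use, is that Propositions \ref{prop:09.03.2020} and \ref{prop:4.3.def} already give, for $\zeta\neq\J(F)$, a second expression for the same coefficients $A^\zeta_{j,F}$ in which the $\zeta$-dependence is entirely explicit (as $\zeta^k$ and $|\zeta|^k$ factors multiplying $\zeta$-independent geometric integrals via Corollary \ref{cor:geom1}); by uniqueness of asymptotic expansions this must agree with the regular-theory expression, and only the former makes the one-sided limits transparent. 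Finally, the closing remark about the "Plemelj-type" identity $\sigma^{(j)}(0)=\sigma^{[j]}_+(0)+\sigma^{[j]}_-(0)$ is a red herring: for $\zeta\neq\zeta_0$ the singular contributions vanish identically (all $\J(F)\neq\zeta$ for $F\cap\supp a\neq\emptyset$), so $A^\zeta_j$ already comes with a clean factor $\sigma^{(j)}(0)$ and no recombination of $\sigma^{[j]}_\pm$ occurs in the limit; this is exactly why the limits in Theorem \ref{thm:main2} need not agree with $A^{\zeta_0}_j$ itself.
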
 
The previous theorem shows that for $j>0$ the functions $\t^\ast\owns \zeta\mapsto A^\zeta_j(a\otimes\sigma)\in \C$ are in general highly discontinuous at each singular value $\zeta=\zeta_0$, where the discontinuities are three-fold:
\begin{enumerate}[leftmargin=*]
  \item the family of definite fixed point sets $F$ contributing to the limit  depends on the sign in the limit. In particular, this produces discontinuities at $j=\mathrm{codim}\,F/2-1$ which can be quantitatively calculated in terms of explicit scalar multiples of $\int_F a \d F$ using \eqref{eq:constantsDF} and \eqref{eq:Nvalues};
 \item  the operators occurring in the contributions of the indefinite fixed point sets $F$  depend on the sign in the limit. In particular, one has $D^+_{\mathrm{codim}\,F/2-1,F}\neq D^-_{\mathrm{codim}\,F/2-1,F}$ by \eqref{eq:constantsDF} since $N_F^+\neq N_F^-$, see \eqref{eq:Nvalues}. Again, the discontinuities produced at $j=\mathrm{codim}\,F/2-1$ can be quantitatively calculated in terms of explicit scalar multiples of $\int_F a \d F$;
  \item neither when approaching $\zeta_0$ from above or below need the limit agree with the value $A^{\zeta_0}_j(a\otimes\sigma)$. This is because $A^{\zeta_0}_j$ involves the distributions $\sigma\mapsto \sigma^{[j]}_\pm(0)$, which occur in neither of the limits, and also due to the fact that the distributions on the manifold occurring in $A^{\zeta_0}_j$ are different from those appearing in the limits in Theorem \ref{thm:main2}. \interfootnotelinepenalty=1000000 
 \end{enumerate}
On the other hand, if $\zeta_0$ is a regular value of $\J$, then the result of Theorem \ref{thm:main2} reduces to the statement that the coefficients in the regular asymptotic expansion \eqref{eq:regcoeff} depend continuously on $\zeta$ at $\zeta_0$. 

\medskip

\textbf{Methods.} To overcome the problems arising in the asymptotic expansion of the generalized Witten integral at singular values of the momentum map, we do not perform a desingularization procedure, but implement a destratification process which consists of several steps. First, we linearize the phase function near each $F\in \F$ using the Guillemin-Sternberg-Marle local normal form and a  classical result by Whitney on smooth extensions of even functions defined on half-spaces.   This linearization is not the result of a monomialization of the phase function, so that  no desingularization of the critical set has taken place. As a result, for each $F \in \F$ one obtains an oscillatory integral with a clean critical set but with an integrand which is not smooth at the singular value $\J(F)$,  so that the stationary phase principle cannot be applied. Instead, in a second step  we take the Fourier transform on the Lie algebra and  split the integral at $\J(F)$ to obtain $C^\infty$-amplitudes. In a third step, we Taylor expand the integrand in powers of $\eps$, resulting in  a separation into singular and regular contributions and  a complete asymptotic description for the linearized integral. It is this separation of the contributions originating from the different strata that we call \emph{destratification}.  Finally, we translate the results obtained in the local model into meaningful expressions that live on the strata of the  symplectic quotient and can be patched together to get the stated global results. 

In this work we restricted ourselves to the simplest case of an $S^1$-action since  the derivation of  a complete asymptotic expansion of the Witten integral for arbitrary compact group actions or even torus actions is  considerably more involved. In fact, restricting to circle actions has the advantage that general phenomena such as the discontinuities of the asymptotic expansion at singular values due to the contributions by lower-dimensional strata are clearly visible while the computational effort is reduced to a minimum. Another simplification (which occurs for any abelian Lie group) is that we do not have to distinguish between orbit reduction and point reduction.

\medskip

\textbf{Previous results.} For  compact Hamiltonian $G$-manifolds $M$ arising in geometric invariant theory,  an asymptotic expansion of the Witten integral  was derived  by Jeffrey, Kiem, Kirwan, and Woolf  \cite[Theorem 34]{JKKW03} using Parseval's formula on the Lie algebra $\g$ and the localization formula of Berline-Vergne and Atiyah-Bott on the manifold $M$. The latter amounts to performing an exact stationary phase analysis only in the manifold variables,  the critical set in question being clean.  The terms in their expansion are given in terms of piecewise polynomial functions evaluated on a Gaussian. Our approach could be regarded as a singular stationary phase analysis  performed simultaneously in the  manifold \emph{and} Lie algebra variables.  In particular, the vanishing of the Lie algebra derivatives enforces a localization on level sets of the momentum map, which in the case of circle actions  leads to a precise description of the coefficients of the piecewise polynomial functions in the expansion \cite[Theorem 34]{JKKW03}  in terms  of integrals on the symplectic strata of the reduced space. In case that $0$ is a regular value of the momentum map, a stationary phase expansion similar to ours  was given for arbitrary $G$   by Meinrenken  \cite[Theorem 3.1]{meinrenken96}, generalizing a corresponding  formula of Jeffrey-Kirwan \cite[Proposition 8.10]{jeffrey-kirwan95}. In case that $0$ is a singular value, the main term in the Witten expansion  was implicitly characterized  in \cite[Theorem 18]{JKKW03}  as an integral over a desingularization of the symplectic quotient, as well as  by Lerman and Tolman \cite[Theorem 5.1]{lerman-tolman00} in the special case of $S^1$-actions. As explained above, our approach is not based on a desingularization but a destratification process, which results in an intrinsic symplectic characterization of all coefficients that could not be obtained before via desingularization techniques.

For non-compact Hamiltonian $G$-manifolds, the generalized Witten integral was studied by the second author in \cite{ramacher13} in the special case that $M=T^\ast N$ is the cotangent bundle of a smooth manifold $N$, equipped with the canonical symplectic form, and the action of $G$ on $M$ is the lift of a smooth $G$-action on $N$. Performing a stationary phase analysis in the Lie algebra and manifold variables and desingularizing partially, the leading term of the asymptotic expansion was characterized    as an integral on the top stratum of the reduced space, together with a remainder estimate. Nevertheless, the employed desingularization techniques would require further development to consider higher order terms; in particular,  the singular contributions $(A^\zeta_j)_\mathrm{sing}$  occurring in our asymptotic expansion in Theorem \ref{thm:main1} were not seen in \cite{ramacher13}. Before, Prato and Wu \cite{prato-wu94} proved a Duistermaat-Heckman type formula in the non-compact setting under a suitable properness assumption on the momentum map.

\medskip

\textbf{Applications and outlook.} The asymptotic behavior of the Witten integral plays an essential role in the derivation of residue formulas in equivariant cohomology, which was the main theme of  \cite{jeffrey-kirwan95, JKKW03} and \cite{ramacher13}. These formulas, in principle,  allow to compute the (intersection) cohomology of  reduced spaces  in terms of the equivariant cohomology of the underlying Hamiltonian space and  the fixed-point data of the group action.  The motivation for the present work was to extend the results obtained in \cite{ramacher13} for basic differential forms to arbitrary equivariant differential forms. This requires a complete asymptotic expansion of the Witten integral  and not just a computation of the leading term with a remainder estimate, which was  sufficient to deal with basic forms. 

  Residue formulas were also applied by Jeffrey and Kirwan  \cite{jeffrey-kirwan97} to Riemann-Roch numbers and the Guillemin-Sternberg conjecture under the assumption  that $0$ is a regular value of the momentum map.  
  This conjecture was proved  by Meinrenken  \cite{meinrenken96} under similar assumptions relying directly on the stationary phase expansion of the Witten integral.  In the case when $0$ is a singular value, the Guillemin-Sternberg conjecture was first proved by Meinrenken and Sjamaar \cite{meinrenken-sjamaar}. Another proof was given by Paradan and Vergne  \cite{paradan-vergne19}, both approaches 
 being based on desingularizations of the reduced space.
 
 In forthcoming papers, we intend to apply  the results derived in this paper to the study of the cohomology of symplectic quotients for general Hamiltonian circle actions via residue formulas, complementing  the previous works \cite{lerman-tolman00} and \cite{JKKW03}. In particular, we plan to extend their work to non-compact manifolds and to interpret the residues in terms of the  symplectic data of the strata of the reduced space. Our destratification approach  should also yield new insights into the Guillemin-Sternberg conjecture for circle actions.

\medskip

\textbf{Structure of the paper.}  This paper is structured as follows: Section \ref{sec:2} contains a brief introduction to Hamiltonian actions and reduced spaces, followed by the definition of the Witten integral, and gives normal forms for the momentum map and the relevant local integrals. In Section \ref{sec:19.3.2018}, complete asymptotic expansions are derived for the local integrals  via a destratification process. The coefficients in the local expansions are  interpreted geometrically in Section \ref{sec:4}. Our main results are then proved in Section \ref{sec:5}. A notation index can be found at the end of this paper.

\medskip

\textbf{Acknowledgments.} We would like to thank Panagiotis Konstantis for his  interest in our work and many stimulating  conversations. Furthermore, we warmly thank Mich\`{e}le Vergne for  many helpful and encouraging remarks, and we are grateful to Iosef Pinelis for suggesting the simplification \eqref{eq:simplif}.  Also, we would like to thank the referee for carefully reading the paper and the constructive report. The first author has received funding from the European Research Council (ERC) under the European Union's Horizon 2020 research and innovation programme (grant agreement No.\  725967).

\section{Background and setup}
\label{sec:2}

We begin by introducing some concepts in the general setting of a Hamiltonian action of a general compact connected Lie group $G$, and  then specialize to the circle case $G=T=S^1$.

\subsection{Hamiltonian actions and reduced spaces}
\label{sec:2.1}
Let $ M $ be a $2n$-dimensional 
symplectic manifold with symplectic form $\omega$. Assume that $M$ carries a Hamiltonian action of a compact connected Lie group $G$ of dimension $d$ with Lie algebra $\g$, and
denote the corresponding Kostant-Souriau momentum map  by
\bqn
\J: M \to \g^\ast,  \quad \J(p)(X)=J(X)(p),
\eqn
which is characterized by the property \bq
dJ(X) =\iota_{\widetilde X} \omega\qquad \forall\;X \in \g,\label{eq:sign24}
\eq
where $\widetilde X$ denotes the fundamental vector field on $M$ associated to $X$, $d$ is the de Rham differential, and $\iota$ denotes contraction.
Note that $\J$ is $G$-equivariant in the sense that $\J(k^{-1} p) = \Ad^\ast(k) \J(p)$. 
Let $(\Omega^\ast_G(M)_c,D)$ be the complex of compactly supported equivariant differential forms on $M$. The elements in $\Omega^\ast_G(M)_c$ can be regarded as $G$-equivariant polynomial maps $\g\to \Omega^\ast_c(M)$,  where $G$ acts on $\g$ by the adjoint action $\Ad(G)$ and on the algebra $\Omega^\ast_c(M)$ of compactly supported differential forms by the pullbacks associated to the $G$-action on $M$. The differential $D$ is then defined by 
$$D(\alpha)(X):=d(\alpha(X))+\iota_{\widetilde X}(\alpha(X)),\qquad \alpha \in \Omega^\ast_G(M)_c.$$
  We denote the cohomology of the complex $(\Omega^\ast_G(M)_c,D)$, which is called the \emph{equivariant cohomology of $M$},  by $H^\ast_G(M)_c$. Further, let
\bq
\overline \omega:=\omega-\J \label{eq:equivext}
\eq
be the equivariantly closed extension $\overline \omega$ of the symplectic form $\omega$. The approach used here is usually called the \emph{Cartan model}.

\begin{rem}[Sign convention]\label{rem:conv1} The sign convention in the definition of $D$ (and hence $\overline  \omega$) varies in the literature. We define  $D$ in coherence with \cite{atiyah-bott84}, while in  \cite{jeffrey-kirwan95} one has $D(\alpha)(X):=d(\alpha(X))-\iota_{\widetilde X}(\alpha(X))$, which leads to $\overline \omega=\omega+\J$ as opposed to our definition \eqref{eq:equivext}.
\end{rem}

From the definition of the momentum map it is clear that the kernel of its derivative is given by
\bq
\label{eq:25.12.2014}
\ker d\J|_{p}=(\g\cdot p)^\omega, \qquad p \in M,
\eq
where we denoted the symplectic complement of a subspace $V\subset T_p M$ by $V^\omega$, and   $\g  \cdot p:= \{\widetilde X_p: X \in \g\}$. Consequently, if  $\zeta\in \J(M)$ is a regular value of $\J$, the level set $\J^{-1}(\{\zeta\})$ is a {(not necessarily connected)} manifold of codimension $1$, and $T_p (\J^{-1}(\{\zeta\}))=\ker \d \J|_{p}=(\g\cdot p)^\omega$, which is equivalent to
\bqn
\widetilde X_p \not=0 \qquad \forall\; p \in \J^{-1}(\{\zeta\}), \, 0\not= X \in \g,
\eqn
compare \cite[Chapter 8]{mumford-fogarty-kirwan}. The latter condition means that all stabilizers of points $p \in \J^{-1}(\{\zeta\})$ are finite, and therefore either of exceptional or principal type, so that $\J^{-1}(\{\zeta\})/G^\zeta$ is an orbifold.  In addition, in view of the exact sequence
\bqn
0 \longrightarrow T_p(\J^{-1}(\{\zeta\})) \stackrel{d\iota^\zeta}\longrightarrow T_p M \stackrel{d \J}\longrightarrow T_\zeta \g^\ast \longrightarrow 0, \qquad p \in \J^{-1}(\{\zeta\}),
\eqn
where $\iota^\zeta: \J^{-1}(\{\zeta\}) \hookrightarrow M$ denotes the inclusion, and the corresponding dual sequence, $\J^{-1}(\{\zeta\})$ is  orientable because $M$ is orientable, compare \cite[Chapter XV.6]{lang}.  

If $\zeta$ is not a regular value,  both $\J^{-1}(\{\zeta\})$ and $\J^{-1}(\{\zeta\})/G^\zeta$  are stratified spaces. While usually  the  orbit type stratification \cite{lerman-sjamaar} is more common, for our purposes it will be more convenient to consider the infinitesimal orbit stratification  
$$\J^{-1}(\{\zeta\})=\bigcup_{(\mathfrak{h})}\J^{-1}(\{\zeta\})_{(\mathfrak{h})},  $$
see \cite[Section 3]{meinrenken-sjamaar}. Its strata consist of infinitesimal orbit type orbifolds, where an infinitesimal orbit type $(\mathfrak{h})$ of the $G$-action is an equivalence class of isotropy algebras $\mathfrak{h}\subset \g$, and  two such algebras $\mathfrak{h},\mathfrak{h}'$ are equivalent if there is an element $g\in G^\zeta$ with  $\mathfrak{h}=\mathrm{Ad}(g)\mathfrak{h}'$. 

Let us now restrict to the case   $G=T=S^1$. The infinitesimal orbit type stratification is then quite simple. In fact,  the quotient $\M^\zeta=\J^{-1}(\{\zeta\})/T$ is stratified by infinitesimal orbit types according to
 \bq
 \label{eq:21.9.2019}
\M^\zeta=  \M^\zeta_\mathrm{top} \sqcup  \M^\zeta_\mathrm{sing}, \qquad \M^\zeta_\aleph:=(\J^{-1}(\mklm{\zeta}) \cap M_{(\h_\aleph)})/T,
\eq
 where $M_{(\h_\aleph)}$ denotes the stratum of $M$ of infinitesimal orbit type $(\h_\aleph)$ with $\h_\mathrm{top}=\mklm{0}$ and  $\h_\mathrm{sing}=\t$. 
 \begin{rem}\label{rem:Mtopempty}It can happen that $\M^\zeta_\mathrm{top}$ is empty. For example, if $M=\R^2$ with  $S^1$ acting by rotations around the origin, the zero level set $\J^{-1}(\{0\})$ consists only of the origin, which is a fixed point.
\end{rem}
Since $\omega$ is non-degenerate, we see from  \eqref{eq:25.12.2014} that
\bqn
p \in M^T=M_{(\h_\mathrm{sing})}  \quad \Longleftrightarrow \quad d\J|_p=0 . 
 \eqn
Since $\J$ is constant on each $F$ we have 

\begin{lem}
\label{lem:singval}
The momentum map $\J:M \to \t^\ast$ has no critical points in $M_{(\h_\mathrm{top})}$ and its singular values are  $\mklm{\J(F): F \in \F}$.\qed
\end{lem}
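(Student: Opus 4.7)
The plan is to reduce both claims to the characterization of critical points via equation \eqref{eq:25.12.2014}, together with the triviality of the infinitesimal orbit type structure for a circle action.

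First, I would identify the critical set of $\J$ with $M^T$. From \eqref{eq:25.12.2014} one has $\ker d\J|_p = (\t \cdot p)^\omega$, so $d\J|_p = 0$ is equivalent to $(\t\cdot p)^\omega = T_pM$. By non-degeneracy of $\omega$, this is in turn equivalent to $\t\cdot p = \{0\}$, i.e., to the vanishing of all fundamental vector fields at $p$, i.e., to the isotropy Lie algebra at $p$ being all of $\t$. Since $T = S^1$ is connected, this last condition is equivalent to $p$ being fixed by all of $T$. Hence the critical set of $\J$ coincides with $M^T = M_{(\h_\mathrm{sing})}$.

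The first assertion of the lemma then follows immediately, since $M_{(\h_\mathrm{top})}$ and $M_{(\h_\mathrm{sing})}$ are disjoint by definition of the stratification \eqref{eq:21.9.2019}. For the second assertion, the singular values of $\J$ are by definition the elements of $\J(\Crit \J) = \J(M^T)$. Writing $M^T = \bigsqcup_{F\in \F} F$ and using the fact, recalled in the paragraph preceding the lemma, that each connected component $F \in \F$ is a (symplectic) submanifold on which $\J$ is constant (with value denoted $\J(F)$), one obtains $\J(M^T) = \{\J(F): F \in \F\}$.

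I do not anticipate any substantive obstacle: the lemma is essentially a direct unfolding of formula \eqref{eq:25.12.2014}, the non-degeneracy of $\omega$, and the observation that for a connected abelian group such as $T = S^1$ the only possible isotropy Lie subalgebras are $\{0\}$ and $\t$ itself, so the stratification \eqref{eq:21.9.2019} cleanly separates the locus of non-critical points from the fixed-point set.
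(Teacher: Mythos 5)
Your proof is correct and follows essentially the same route as the paper, which treats the lemma as immediate from the preceding displayed equivalence $p\in M^T=M_{(\h_\mathrm{sing})} \Leftrightarrow d\J|_p=0$ (itself a consequence of \eqref{eq:25.12.2014} and non-degeneracy of $\omega$) together with the constancy of $\J$ on each $F\in\F$. You simply spell out the intermediate step that vanishing of all fundamental vector fields at $p$ means $\t_p=\t$ and hence, by connectedness of $T=S^1$, that $p\in M^T$; this is exactly the content implicit in the paper's identification $M^T=M_{(\h_\mathrm{sing})}$.
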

As already mentioned in the introduction, the \emph{top stratum} $\M^\zeta_\mathrm{top}$  is dense in $\M^\zeta$ if non-empty and an orbifold, while  $\M^\zeta_\mathrm{sing}$ is a smooth manifold, each of its components  being identical to some $F\in \F$. Furthermore, each $F$ is a symplectic submanifold of $M$, and there is also a natural symplectic form  $\omega^\zeta_\mathrm{top}$ on $\M^\zeta_\mathrm{top}$, as explained on page \pageref{eq:mstrat}.

\subsection{The Witten integral}\label{subsec:2.2}

The central objects of our study are generalized Witten integrals of the form \eqref{eq:2}, and our main tools for their investigation will be Fourier analysis and singular stationary phase expansion. Fix an  identification  $\t\cong\t^\ast\cong\R$ and let $dx$ and $d\zeta$ be measures on $\t$  and  $\t^\ast$ that correspond to Lebesgue measure by the fixed identifications, respectively. Denote by
$$\F_\t: \S(\t^\ast) \rightarrow \S(\t), \qquad \F_\t:\S'(\t) \rightarrow \S'(\t^\ast)
$$ the Fourier transform on the Schwartz space and the space of tempered distributions, given by\footnote{Regarding  normalization conventions, see  \cite[footnotes on p.\ 125]{JeffreyKirwan98}.}
\bq
\widehat \psi(x) := (\F_\t  \psi) (x) := \int_{\t^\ast} e^{-i\eklm{\zeta,x}} \psi(\zeta) \d \zeta,\qquad\eklm{\zeta,x}:=\zeta(x),\quad x\in\t,\qquad \psi\in \S(\t^\ast),\label{eq:Fouriertransform}
\eq
and recall that $\overline \omega:=\omega-\J$.  Consider now the \emph{generalized Duistermaat-Heckman integral} 
\bq
L_\varrho:\t\to \C, \qquad L_\varrho(x):= \int_{M}e^{-i\,\overline \omega(x)} \varrho(x), 
\qquad  \varrho \in \Omega^\ast_T(M)_c, 
\label{eq:Lrhodef}
\eq
regarded as a tempered distribution in $\S'(\t)$,  compare  \cite{witten92, jeffrey-kirwan95, duistermaat94}\footnote{Jeffrey and Kirwan use the notation $\Pi_\ast(\varrho e^{-i\overline \omega})$ for our map $L_\varrho$, see \cite[p.\ 299]{jeffrey-kirwan95}.}. If $M$ is compact and $\varrho=1$, $L_\varrho$ is the classical  Duistermaat-Heckman integral,  whose $\t$-Fourier transform  is given by the pushforward $\J_\ast(\omega^n/n!)$ of the Liouville form along $\J$, which is a piecewise polynomial measure on $\t^\ast$ \cite{duistermaat-heckman82}. Motivated by this, we shall examine the behavior of the Fourier transform of $L_\varrho$ near the origin, and for this sake consider an approximation of the Dirac $\delta$-distribution centered at $\zeta\in \t^\ast$ given by 
\bqn 
\phi^\zeta_\eps(\xi):= \phi((\xi-\zeta)/\eps ) /\eps, \qquad \eps >0, 
\eqn
where  $\phi \in \CT(\t^\ast)$ is a test function satisfying $\widehat \phi(0)=1$. We are then interested in the limit 
\begin{align}\begin{split}
\label{eq:50}
\lim_{\eps\to 0^+} \eklm{ \widehat{L_\varrho},\phi^\zeta_\eps}&=
\lim_{\eps\to 0^+} \int_{\t} L_\varrho(x) \widehat {\phi^\zeta_\eps}(x) \d x=\lim_{\eps\to 0^+}\int_{\t} L_\varrho(x) e^{-i\zeta(x)}\widehat \phi(\eps x) \d x\\
&=\lim_{\eps\to 0^+}\eps^{-1}\int_{\t} \left[\int_{M} e^{i(J-\zeta)(x)/\eps} e^{-i\omega} \varrho(x/\eps) \right]  \widehat \phi(x) \d x,
\end{split}
\end{align}
which does not need to exist a priori in general, and its dependence on  $\zeta$ in a neighbourhood of $0\in \t^\ast$.  Thus, we are led to  the definition of the    \emph{Witten integral}
\bq
\label{eq:mainint}
\W^{\zeta}_{\varrho,\phi}(\eps):=\int_{\t} \left[\int_{M} e^{i(J-\zeta)(x)} e^{-i\omega} \varrho(x) \right]  \hat \phi(\eps x) \d x, \qquad \varrho \in \Omega_{T}^\ast(M)_c, \, \phi \in \S(\t^\ast),\; \eps>0,  \zeta \in \t^\ast, 
\eq
and to the investigation of its asymptotic behavior as $\eps \to 0^+$ and $\zeta \to 0$.  Note that in  this notation, $
\langle \widehat{L_\varrho}, \phi^\zeta_\eps\rangle= \W^{\zeta}_{\varrho,\phi}(\eps)$. Furthermore,  if $\varrho$ is equivariantly closed, $\W^{\zeta}_{\varrho,\phi}(\eps)$ actually only depends on the cohomology class of $\varrho$ in view of \cite[Lemma 1]{ramacher13}.

\begin{rem}\label{rem:witten} The original Witten  integral considered in  \cite{witten92} reads in our setting 
\bq
\frac{1}{(2\pi)^2 i}\int_\t\bigg[\int_M\big(e^{-i\bar  \omega}\varrho\big)(x)\bigg]e^{-\nu\frac{x^2}{2}}\d x,\qquad \nu>0,\; \varrho\in \Omega^\ast_T(M)_c. \label{eq:origWitten}
\eq
Writing $\eps:=\sqrt{\nu}$ we see that this  equals $( (2\pi)^2 i)^{-1}$ times  $\W^{\zeta}_{\varrho,\phi}(\eps)$ with $\hat \phi(x)=e^{-\frac{x^2}{2}}$ and $\zeta=0$. 
\end{rem}
To formulate \eqref{eq:mainint} more explicitly, write $\varrho$ as a finite linear combination 
\bq
\label{eq:31.7.a}
\varrho(x)=\sum_{k,m}\varrho_{k,m} x^k, \qquad \varrho_{k,m} \in \Omega^m(M)_c, \qquad k,m \in \N\cup \{0\}.
\eq
 For those $\varrho_{k,m}$ which are differential forms of odd degree, there is no appropriate power $N\in \N\cup \{0\}$ such that $\omega^N\wedge \varrho_{k,m}$ is a volume form, therefore only the $\varrho_{k,m}$ with $m$ even contribute to $\W^{\zeta}_{\varrho,\phi}(\eps)$. Thus, 
 \bqn
\W^{\zeta}_{\varrho,\phi}(\eps)=\sum_{k,m,\; m \,\text{even} }\varepsilon^{-k-1}{\int_{\t} \left[\int_{M} e^{i(J-\zeta)(x)/\eps} \frac{(-i\omega)^{n-m/2}\varrho_{k,m}}{(n-m/2)!} \right]  x^k \widehat \phi (x) \d x}.
\eqn
We associate to each $\varrho_{k,m}$ a $T$-invariant function $a_{k,m}\in \CT(M)$ by the relation
\bq
\frac{(-i\omega)^{n-m/2}\varrho_{k,m}}{(n-m/2)!}=a_{k,m}\d M,\label{eq:ampl5239580}
\eq
where $dM:={\omega^n}/{n!}$ is the symplectic volume form on $M$.  In this way, we are reduced to studying the asymptotic behavior of the \emph{generalized Witten integrals}
\bq
I_{a,\sigma}^\zeta (\varepsilon)=\int_{\t} \int_{M} e^{i\psi^\zeta(p,x)/\eps} a(p) \d M(p)\sigma(x)\d x,\qquad \zeta\in \t^\ast,\quad \varepsilon\to 0^+,\label{int}
\eq
 with amplitudes  $a\in \CT(M)$, $\sigma \in \S(\t)$, 
where the phase function $\psi^\zeta\in \Cinft(M\times\t)$ is given by
\bq
\psi^\zeta(p,x):=\J(p)(x)-\zeta(x).\label{eq:phase}
\eq
 Now, when  trying to describe the asymptotic behavior of the integral $I_{a,\sigma}^\zeta(\eps)$ by means of the generalized stationary phase principle, one faces the serious difficulty that the critical set of the phase function $\psi^\zeta$  is in general not smooth.  Indeed, due to the linear dependence of $J(x)$ on $x$ we obtain
\bqn
\gd_{x} \psi^\zeta (p,x) =\J(p)-\zeta,
\eqn
and because of  the non-degeneracy of $\omega$,
\bqn
dJ(x)=\iota_{\widetilde x} \omega=0 \quad \Longleftrightarrow \quad \widetilde x =0,
\eqn
where  $\widetilde x$ is the fundamental vector field on $M$ associated to $x$. Hence,  the critical set reads
 \begin{align}
 \label{eq:4}
 \Crit(\psi^\zeta)&:=\mklm{ (p,x) \in {M} \times \t: d\psi^{\zeta}  (p,x) =0}=\mklm{(p,x)\in \J^{-1}(\{\zeta\}) \times \t: \widetilde x_p=0 }.
\end{align}
Let us first assume that $\zeta$ is a regular value. As discussed in Section \ref{sec:2.1}, $\J^{-1}(\{\zeta\})$ is an orientable manifold,  and all stabilizers of points in $\J^{-1}(\{\zeta\})$ are finite. Consequently, $ \Crit(\psi^\zeta)=\J^{-1}(\{\zeta\}) \times \mklm{0}$; in particular, it is an orientable manifold. Even further,   the critical set of the phase function $\psi^\zeta$ is \emph{clean}  \cite[Proof of Proposition 2]{ramacher13}, which means that the transversal Hessian is non-degenerate at all points in $\Crit(\psi^\zeta)$, and the generalized stationary phase theorem \cite[Theorem C]{ramacher13} can be applied, yielding  a complete asymptotic expansion for $I_{a,\sigma}^\zeta(\eps)$.  More precisely and generally, we have the following:
\begin{prop}
\label{prop:asymptreg}
For each $\zeta\in \t^\ast$, there is a family $\{\mathscr D^\zeta_{j}\}_{j\in \N_0}$ of differential operators $\mathscr D^\zeta_{j}$ of order $j$ defined on a neighborhood of the submanifold $\J^{-1}(\{\zeta\})\cap M_{(\h_\mathrm{top})}$ which are transversal to it such that the following holds: For each  $J_0\in \N_0$ and each compact set $K\subset M$ with $\J^{-1}(\{\zeta\})\cap K$ containing only regular points of $\J$, there exists an $N_{J_0,K,\zeta}\in \N_0$ and a family of differential operators $\{D_{J_0,K,\zeta}^l\}_{0\leq l \leq N_{J_0,K,\zeta}}$ on $M$ such that for all $a\in \CT(M)$ with $\supp a\subset K$ and all $\sigma\in \S(\R)$ one has
\bq
\Big |I_{a,\sigma}^\zeta(\eps) -  \eps \sum_{j=0}^{J_0} \eps^j\sigma^{(j)}(0) \I^\zeta_{j}(a) \Big | \leq \sum_{k,l=0}^{N_{J_0,K,\zeta}}\big\Vert D_{J_0,K,\zeta}^l a\big\Vert_{\infty}\big\Vert\sigma^{(k)}\big\Vert_{\infty} \; \eps^{J_0+2}\quad \forall\; \eps>0\label{eq:estimregular}
\eq
with distributions $\I^\zeta_{j}\in\D'(M)$ of the form
\bq
\I^\zeta_{j}(a)=\int_{\mathscr M^\zeta_\mathrm{top}}\big<\mathscr D^\zeta_{j} a\big>_T\d \mathscr M^\zeta_\mathrm{top},\qquad \I^\zeta_{0}(a)=2\pi\int_{\mathscr M^\zeta_\mathrm{top}}\eklm{a}_T\d \mathscr M^\zeta_\mathrm{top},\label{eq:coeffreg}
\eq
where $\d \mathscr M^\zeta_\mathrm{top}$ is the symplectic volume form on $\mathscr M^\zeta_\mathrm{top}$, and  
for a function $f$ on $M$ and a $T$-orbit $T\cdot p\subset M$, we put  $\eklm{f}_T(T\cdot p):=\intop_{T}f(g\cdot p)\d g$, where $d g$ is the Haar measure on $T$ fixed by our identification $\t\cong\R$. 
Moreover, if $\mathscr{V}\subset \t^\ast$ is an open set such that $\J^{-1}(\mathscr{V})\cap K$ contains only regular points of $\J$, then the functions $\mathscr{V}\ni\zeta\mapsto \I^\zeta_{j,k}(a)\in \C$ are smooth for all $a\in \CT(M)$ with $\supp a\subset K$.
\end{prop}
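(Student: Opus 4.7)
The plan is to exploit the fact that the phase $\psi^\zeta(p,x)=(\J(p)-\zeta)(x)$ is linear in $x$, so the inner integration can be carried out exactly as a Fourier transform. Using \eqref{eq:Fouriertransform} one obtains
\begin{equation*}
I_{a,\sigma}^\zeta(\eps)=\int_M a(p)\,\widehat\sigma\bigl((\zeta-\J(p))/\eps\bigr)\d M(p),
\end{equation*}
which reduces the oscillatory integral on the non-compact product $M\times \t$ to an integrand concentrated near the level set $\J^{-1}(\{\zeta\})$ as $\eps\to 0^+$. The assumption that $\J^{-1}(\{\zeta\})\cap K$ consists of regular points guarantees that $\J^{-1}(\{\zeta\})\cap K\subset M_{(\h_\mathrm{top})}$ and that $\J$ is a submersion on an open neighborhood $U$ of $\J^{-1}(\{\zeta\})\cap K$ in $M$; this encodes at the level of our reformulation the cleanness of $\Crit(\psi^\zeta)$ observed after \eqref{eq:4}.

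Next, I would pick a cutoff $\chi\in \CT(M)$ equal to $1$ near $\J^{-1}(\{\zeta\})\cap K$ and supported in $U$. On $M\setminus\supp\chi$, the function $|\J-\zeta|$ is bounded below by some $c>0$ on $\supp a$, so the Schwartz decay of $\widehat\sigma$ yields a $O(\eps^N)$ contribution for every $N$, with constants controlled by a fixed number of derivatives of $\sigma$ and sup-norms of $a$. On $\supp\chi$, the coarea formula defines a smooth compactly supported density $\Phi_a(t)\d t:=\J_\ast(\chi a\d M)$ on $\t^\ast$, reducing the main part of $I_{a,\sigma}^\zeta(\eps)$ to $\int_{\t^\ast}\Phi_a(t)\widehat\sigma((\zeta-t)/\eps)\d t$. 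Rescaling $t=\zeta+\eps s$, Taylor-expanding $\Phi_a$ at $\zeta$ up to order $J_0$, and applying the Fourier identity
\begin{equation*}
\int_{\t^\ast}\xi^j\,\widehat\sigma(-\xi)\d\xi=2\pi\,i^j\sigma^{(j)}(0)
\end{equation*}
(obtained by differentiating the inverse Fourier transform at $0$) then produces an expansion of the required form with coefficients proportional to $\Phi_a^{(j)}(\zeta)\sigma^{(j)}(0)$ and with a Taylor remainder controlled by $\norm{\sigma^{(k)}}_\infty$ and sup-norms of finitely many derivatives of $a$ on $K$.

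To identify the geometric form of the coefficients, I would choose a smooth vector field $V$ on $U\cap M_{(\h_\mathrm{top})}$ with $V(\J)=1$, whose existence is ensured by the submersivity of $\J$. Differentiating under the coarea formula, $\Phi_a^{(j)}(\zeta)$ becomes an integral over $\J^{-1}(\{\zeta\})\cap M_{(\h_\mathrm{top})}$ of $V^j a$ plus lower-order corrections coming from the $\zeta$-dependence of the level-set density; these assemble into a transversal order-$j$ operator $\mathscr D^\zeta_j$ on a neighborhood of $\J^{-1}(\{\zeta\})\cap M_{(\h_\mathrm{top})}$ with $\mathscr D^\zeta_0=2\pi$, the constant $2\pi$ arising from integration along $T$-orbits with the Haar measure fixed by $\t\cong\R$. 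Pushing down the resulting level-set integral via $\pi:\J^{-1}(\{\zeta\})\cap M_{(\h_\mathrm{top})}\to \M^\zeta_\mathrm{top}$ then converts fiber integration into the $T$-average $\eklm{\cdot}_T$ and the level-set density into $\d\M^\zeta_\mathrm{top}$, yielding \eqref{eq:coeffreg}. Globally the operators $\mathscr D^\zeta_j$ are obtained from their local versions via a $T$-invariant partition of unity on a tubular neighborhood of $\J^{-1}(\{\zeta\})\cap M_{(\h_\mathrm{top})}$.

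For smoothness on an open set $\mathscr V\subset \t^\ast$ of regular values, I would observe that the construction above can be carried out uniformly in $\zeta$: both the cutoff $\chi$ and the transversal vector field $V$ can be chosen locally uniformly on $\mathscr V$, so $\Phi_a\in \CT(\t^\ast)$ and $\zeta\mapsto \Phi_a^{(j)}(\zeta)$ is smooth on $\mathscr V$. The main obstacle I anticipate is not the formal expansion but the bookkeeping required to produce the uniform remainder estimate \eqref{eq:estimregular}, i.e.\ tracking how many derivatives of $a$ and $\sigma$ enter through Taylor's formula with integral remainder and through the coarea identification of $\Phi_a$; this is a routine but delicate task. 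Alternatively, one may bypass the coarea detour and invoke directly the generalized stationary phase theorem \cite[Theorem C]{ramacher13}, after cutting off the $\t$-integration using the Schwartz decay of $\sigma$ at infinity; both approaches produce the same coefficients and the same form of the remainder.
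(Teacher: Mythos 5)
Your proposal is correct and takes a genuinely different route from the paper. The paper's proof is a one-line citation of \cite[Propositions 2 and 7]{ramacher13}, i.e.\ it invokes the generalized stationary phase theorem on $M\times\t$ with a clean critical manifold and then reads off the coefficient structure from the formulas in that reference. You instead exploit from the start the linearity of the phase in $x$: carrying out the $\t$-integral exactly as a Fourier transform reduces the generalized Witten integral to $\int_M a\,\widehat\sigma\bigl((\zeta-\J)/\eps\bigr)\d M$, which is then handled by a cutoff near the level set, the coarea formula (valid because $\J$ is a submersion near $\J^{-1}(\{\zeta\})\cap K$ by the regularity hypothesis), Taylor expansion of the fiberwise density $\Phi_a$, and the elementary Fourier moment identity. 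This bypasses the clean stationary phase machinery altogether and makes both the power of $\eps$ and the appearance of $\sigma^{(j)}(0)$ completely transparent; it also extends verbatim to higher-dimensional tori, since the phase is always linear in $x$. What the paper's citation buys is brevity and uniform language with the rest of the argument in \cite{ramacher13}; what your route buys is self-containedness and a structural explanation for why the coefficients factor as transversal differential operators on the level set pushed down to $\M^\zeta_\mathrm{top}$. Your formal expansion, the Fourier identity $\int \xi^j\widehat\sigma(-\xi)\d\xi=2\pi i^j\sigma^{(j)}(0)$, the passage from $\Phi_a^{(j)}(\zeta)$ to transversal operators via a lift $V$ with $V(\J)=1$, and the smoothness argument are all sound. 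Two small remarks: the factor $2\pi$ in $\mathscr D^\zeta_0=2\pi$ comes from the $j=0$ Fourier moment $\int\widehat\sigma(-\xi)\d\xi=2\pi\sigma(0)$, not from the orbit integral (the latter is already absorbed in the definition of $\eklm{\cdot}_T$ and in the relation between the Liouville measure on the level set and $\d\M^\zeta_\mathrm{top}$); and the remainder bookkeeping you flag as the delicate part is indeed where the work lies, which is precisely why the paper delegates it to the remainder estimates already proved in \cite{ramacher13} -- your proposed fallback of citing \cite[Theorem C]{ramacher13} is the paper's actual route.
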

\begin{proof}
It suffices to apply \cite[Proposition 2 and Proposition 7]{ramacher13}, where the form of the coefficients as stated here follows from \cite[Eqs.\ (18), (62)]{ramacher13}, taking into account that the function $H$ appearing in \cite[Eq.\ (62)]{ramacher13} is linear with respect to the $\t$-variable in our case, exactly as in \cite[proof of Proposition 3]{ramacher13}. 
\end{proof}

If $\zeta$ is not a regular value of $\J$, there are compact subsets $K\subset M$ such that $\Crit(\psi^\zeta)\cap K\times \t$ is not clean and the usual stationary phase theorem cannot be applied.  Instead,  we shall linearize the phase function $\psi^\zeta$ in suitable local coordinates to  derive an asymptotic expansion of the generalized Witten integral by a careful direct analysis.

\subsection{Local normal forms for the momentum map and the Witten integral}
\label{subsec:4.1}

We  shall now introduce suitable coordinates on $M$ near  the set of fixed-points 
$$M^T:=\mklm{p \in M: t \cdot p =p \; \forall \,  t \in T}.$$
 The  connected components  of $M^T$ are symplectic submanifolds of $M$ of possibly different dimensions. Recall that we denote the set of these components by  $\F$.  Let  $F \in \F$ and consider  the symplectic normal bundle $E_F:=TF^\omega\subset TM $ of $F$ in $M$. Since $F$ is symplectic, one has $TM|_{F}=TF\oplus E_F$ and $E_F$ carries a symplectic structure. In particular, the total space of $E_F$ becomes a symplectic manifold.  Furthermore, the group $T=S^1$ acts on $E_F$ fiberwise, and we may choose an $S^1$-invariant complex structure on $E_F$ compatible with the symplectic one. Each fiber of the so complexified bundle $E_F$ then splits into a direct sum of complex $1$-dimensional representations of $S^1$, so that with $ \dim F=2n_F$
\bq
E_F=\bigoplus_{j=1}^{n-n_F}\E^F_j, \label{eq:splitting111}
\eq
the $\E^F_j$ being complex line bundles over $F$.  The Lie algebra   $\t$ acts on them by
\bqn 
(\E_j^F)_p \ni v \mapsto i \lambda^F_j(x) v \in (\E_j^F)_p, \qquad   p \in F, \, x \in \t, \, \lambda^F_j\in \t^\ast\cong \R,
\eqn
where $\lambda^F_1,\ldots,\lambda^F_{n-n_F} \in \Z\setminus \{0\}$ are the weights of the $T$-action on $(E_F)_p$. They do not depend on the point $p\in F$ because $F$ is connected, and they can be grouped into positive weights   $\lambda^F_1,\dots, \lambda^F_{\ell_F^+}$ and negative weights $\lambda^F_{\ell_F^++1},\dots, \lambda^F_{\ell_F^++\ell_F^-}$. The codimension of $F$  in $M$ is given by $\mathrm{codim}\,  F=2(n-n_F)=2(\ell_F^++\ell_F^-)$.  We shall now make use of the  local normal form theorem for the momentum map $\J$ due to  Guillemin-Sternberg \cite{guillemin-sternberg84} and Marle
\cite{marle85}, 
which in our situation reads as follows:

\begin{prop}\label{prop:localnormform}
For each component $F \in \F$, there exist 
\begin{enumerate}[leftmargin=*]
\item a faithful unitary representation $\rho_F: S^1 \to (S^1)^{\ell_F^++\ell_F^-} \subset U(\ell_F^+) \times U(\ell_F^-)\subset U(\ell_F^++\ell_F^-)$ with positive weights $\lambda^F_1,\dots, \lambda^F_{\ell_F^+}\in \N$ and negative weights $\lambda^F_{\ell_F^++1},\dots, \lambda^F_{\ell_F^++\ell_F^-}\in -\N$, 
\item a principal $K_F$-bundle $P_F \rightarrow F$, where $K_F$ is a subgroup of $U(\ell_F^+) \times U(\ell_F^-)$ commuting with $\rho_F(S^1)$,
\end{enumerate}
such that $$
E_F\cong P_F\times_{K_F}\C^{\ell_F^++\ell_F^-},
$$
where $P_F\times_{K_F}\C^{\ell_F^++\ell_F^-}\rightarrow F$ is the vector bundle  associated to $P_F$ by the $K_F$-action. Furthermore, there is a symplectomorphism $\Phi_F:U_F\rightarrow V_F$ from an $S^1$-invariant neighborhood $U_F$ of $F$ in $M$ onto  an $S^1$-invariant neighborhood $V_F$ of the zero section in $E_F$, which is equivariant  with respect to the $S^1$-action on $E_F\cong P_F\times_{K_F}\C^{\ell_F^++\ell_F^-}$ given by $\rho_F$, and 
\bq
\label{eq:30.06.2018}
\J \circ \Phi_F^{-1}([\wp,w])= \frac 12 \sum_{j=1}^{\ell_F^++\ell_F^-} \lambda^F_j |w_j|^2 + \J(F), \quad w =(w_1,\dots,w_{\ell_F^++\ell_F^-}),\; [\wp,w]\in P_F\times_{K_F}\C^{\ell_F^++\ell_F^-}.
\eq
In particular,  $2\ell_F^-$ and $2\ell_F^+$ are the dimensions of the negative and positive eigenspaces  of the Hessian of $\J$ at a point of $F$, respectively.
\end{prop}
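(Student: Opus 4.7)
My plan is to deduce Proposition \ref{prop:localnormform} from the general Marle--Guillemin--Sternberg normal form theorem, specialized to the circle case, together with the weight-space decomposition of $S^1$-representations. The statement decomposes naturally into three ingredients: (a) the linear algebra of the $S^1$-action on the fibers of the symplectic normal bundle $E_F$; (b) the identification of $E_F$ as an associated bundle $P_F\times_{K_F}\C^{\ell_F^++\ell_F^-}$; and (c) the existence of an equivariant symplectomorphism $\Phi_F$ yielding the stated quadratic form for the momentum map.

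First, I would pick a $T$-invariant compatible almost complex structure $\mathscr{J}$ on $E_F$; such a structure exists by a standard averaging argument applied to any compatible almost complex structure, since $T$ is compact. This turns each fiber $(E_F)_p$ into a complex Hermitian $T$-representation with non-zero weights (zero weights would contradict $p \in M^T$ being a connected component of $M^T$, using \eqref{eq:25.12.2014} and the non-degeneracy of $\omega$). Because $F$ is connected, the multiset of weights on $(E_F)_p$ is independent of $p$, yielding the global splitting \eqref{eq:splitting111} into complex line bundles $\E^F_j$. Grouping the weights by sign gives the counts $\ell^\pm_F$. Taking $P_F$ to be the unitary frame bundle of $E_F$ reduced to respect the weight decomposition, one naturally ends up with a reduction of the structure group to $U(\ell^+_F)\times U(\ell^-_F)$; then $K_F$ is the subgroup of $U(\ell^+_F)\times U(\ell^-_F)$ that centralizes $\rho_F(S^1)$, where $\rho_F$ is diagonal with entries $e^{i\lambda^F_j x}$, and $P_F$ inherits a principal $K_F$-structure so that $E_F\cong P_F\times_{K_F}\C^{\ell^+_F+\ell^-_F}$.

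Next, the symplectomorphism $\Phi_F$ is produced by an equivariant Weinstein tubular neighborhood argument. Using a $T$-invariant Riemannian metric on $M$, the exponential map gives a $T$-equivariant diffeomorphism from a neighborhood of the zero section in $E_F$ onto a neighborhood of $F$ in $M$. Equipping $E_F$ with a natural symplectic form that agrees with $\omega$ along $F$ (fiberwise the standard Hermitian symplectic form, plus $\pi^*\omega|_F$, plus a connection-dependent correction), an equivariant Moser isotopy adjusts this diffeomorphism into the desired $T$-equivariant symplectomorphism $\Phi_F:U_F\to V_F$. The key technical input is that the Moser vector field can be chosen $T$-invariant, which follows by averaging once the interpolating family of symplectic forms is taken $T$-invariant.

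Finally, for the formula \eqref{eq:30.06.2018}: working in the local model $\C^{\ell^+_F+\ell^-_F}$ with standard symplectic form $\sum_j\frac{i}{2}dw_j\wedge d\bar w_j$ and the linear $S^1$-action $\rho_F$, the fundamental vector field for $x\in\t\cong\R$ is $\widetilde x(w)=(i\lambda^F_1 x w_1,\ldots,i\lambda^F_{\ell^+_F+\ell^-_F}x w_{\ell^+_F+\ell^-_F})$; contracting with the symplectic form gives
\[
\iota_{\widetilde x}\omega = \tfrac{1}{2}\, d\Big(\sum_j \lambda^F_j|w_j|^2\Big)\,x,
\]
and matching the defining property \eqref{eq:sign24} with the boundary value $\J|_F = \J(F)$ yields precisely \eqref{eq:30.06.2018}. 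Extending by $K_F$-equivariance to the associated bundle $P_F\times_{K_F}\C^{\ell^+_F+\ell^-_F}$ gives the global formula on $V_F$. The main obstacle I anticipate is the careful bookkeeping in the equivariant Moser step to guarantee that $\Phi_F$ simultaneously realizes the associated bundle structure \emph{and} pulls $\J$ back to the stated quadratic form; as with the classical Marle--Guillemin--Sternberg theorem, this is handled by incorporating the momentum map condition into the equivariant Moser interpolation from the outset so that the correction preserves the weight decomposition.
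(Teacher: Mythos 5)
The paper disposes of this proposition by citing \cite[Lemma 3.1]{lerman-tolman00} without further argument, so your proposal, which sketches the underlying proof, goes considerably further than the paper does. Your strategy---the weight-space decomposition of the symplectic normal bundle $E_F$, reduction of the structure group to $U(\ell_F^+)\times U(\ell_F^-)$ via the frame bundle, and an equivariant Weinstein/Moser argument---is the standard way one would prove the cited Lerman--Tolman lemma, so the two approaches are consistent in substance; you are unpacking the citation rather than replacing it.

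There is one place where your proposal works harder than necessary. The ``main obstacle'' you flag at the end---ensuring that the Moser isotopy simultaneously realizes the associated bundle structure and pulls $\J$ back to the quadratic form---is not actually an obstacle, because the momentum map condition comes for free from equivariance. If $\Phi_F$ is any $S^1$-equivariant symplectomorphism between connected $S^1$-invariant neighborhoods, then for each $x\in\t$ the defining relation \eqref{eq:sign24}, together with $\Phi_F^*(\iota_{\widetilde x}\omega_{E_F})=\iota_{\widetilde x}\omega$ (which holds because $\Phi_F$ intertwines the fundamental vector fields and the symplectic forms), shows that $d(\J\circ\Phi_F^{-1})$ coincides with the differential of the model momentum map. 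Since both sides equal $\J(F)$ on the zero section and $U_F$ can be taken connected, \eqref{eq:30.06.2018} follows with no additional bookkeeping in the Moser step. This also removes the need to reconcile your fiberwise contraction computation with the connection-dependent correction in the minimal-coupling form on $E_F$: the fiberwise identity identifies the quadratic part, and equivariance plus the boundary condition at $F$ do the rest.
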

\begin{proof} See \cite[Lemma 3.1]{lerman-tolman00}. \end{proof}
Note that the local normal form neighborhood $U_F$ has the property that
\bq
U_F\cap M^T=F. \label{eq:UFMT}
\eq
By shrinking the $U_F$, we shall assume that each point $p\in M$ lies in only finitely many $U_F$. We then choose a locally finite partition of unity $\mklm{\chi_\mathrm{top},\chi_F}_{F \in \F}$ on $M$ subordinate to the open cover 
\bqn
M=M_{(\h_\mathrm{top})} \cup  \bigcup_{F \in \F} U_F,
\eqn
consisting of $T$-invariant functions such that $\chi_F\equiv 1$ in a neighborhood of $F$ for each $F\in \F$. 

The generalized Witten integral \eqref{int} with  parameter $\zeta\in \t^\ast$ and amplitudes $a\in \CT(M)$, $\sigma\in \S(\t)$ can now be written as
\bq
\label{eq:25.5.2019}
  I_{a,\sigma}^\zeta(\varepsilon)  =I_{a\chi_\mathrm{top},\sigma}^\zeta (\varepsilon)+  \sum_{F\in \mathcal \F: F\cap \supp a \neq \emptyset}  I_{a\chi_F,\sigma}^\zeta (\varepsilon),
 \eq
 which is a finite sum because $\supp a$ is compact and our partition of unity is locally finite. More abstractly and conveniently, we can write the decomposition in terms of distributions as
\bq
I^\zeta(\varepsilon)  =I_{\chi_\mathrm{top}}^\zeta (\varepsilon)+  \sum_{F\in \mathcal \F}  I_{\chi_F}^\zeta (\varepsilon) \qquad \text{in}\quad (\D(M)\otimes \S(\t))'.\label{eq:sumdistr}
\eq 
We shall focus our attention in the following on the localized integrals $I_{a\chi_F,\sigma}^\zeta (\eps)$. In terms of the coordinates provided by $\Phi_F$ we obtain with \eqref{eq:30.06.2018} and the notation
$$\zeta_F:=\zeta-{\J(F)}$$
for each of the localized integrals the  formula 
\begin{align}
\begin{split}
I_{a\chi_F,\sigma}^\zeta (\eps)&=\int_\t \int_{V_F} e^{i(\J\circ \Phi^{-1}_{F}([\wp,w])-\zeta)(x)/\eps} \, (a\chi_F)(\Phi^{-1}_{F}([\wp,w]))\d[\wp,w]\sigma(x) \d x\\\label{eq:Cespedes1737}
&=\int_\R \int_{P_F\times_{K_F}\C^{\ell_F^++\ell_F^-}} e^{i\frac x{2\eps}(\eklm{Q_Fw,w}- 2\zeta_F) }\;   (a\chi_F)(\Phi^{-1}_{F}([\wp,w]))  \d[\wp,w]\sigma(x) \d x,
\end{split}
\end{align}
where we identified $\t$ with $\R$, $d[\wp,w]$ denotes the symplectic form on $P_F\times_{K_F}\C^{\ell_F^++\ell_F^-}\cong E_F$, which agrees on $V_F$ with the pullback of the symplectic volume form $(\Phi_F^{-1})^\ast(dM|_{U_F})$,    
and we  introduced on $\C^{\ell_F^++\ell_F^-}$ the non-degenerate quadratic form
\begin{align}
\label{eq:22.10.19a}
\eklm{Q_Fw,w}&:=  \sum_{j=1}^{\ell_F^++\ell_F^-} \lambda^F_j |w_j|^2=\sum_{j=1}^{\ell_F^++\ell_F^-} \lambda^F_j  \big ((\Re w_j)^2 +(\Im w_j)^2\big ).
\end{align}
Since $\J\circ \Phi^{-1}_{F}([\wp,w])=\eklm{Q_Fw,w}$  depends only on $w\in \C^{\ell_F^++\ell_F^-}$,  we want to lift $I_{a\chi_F,\sigma}^\zeta$ to $P_F \times \C^{\ell_F^++\ell_F^-}$ in order to integrate independently over $w$ in \eqref{eq:Cespedes1737}.  
Thus, let 
\[
dF:= \omega^{\dim F/2}/(\dim F/2)!
\]
be the symplectic volume form on $F$ and let $d\wp$ be a smooth volume density on $P_F$. Then, since $P_F$ is a smooth fiber bundle over $F$, there is a differential form $\eta_F$ on $P_F$ such that
\bq
d\wp=|\eta_F\wedge \pi_{F}^\ast d F|,\label{eq:meas1}
\eq
where $\pi_{F}:P_F\to F$ is the fiber bundle projection (cf.\ \cite[p.\ 430]{lang}). Let us fix a preferred volume density $d\wp$ by demanding that the fiber volume $V(p):=\int_{\pi_F^{-1}(\{p\})}\eta_F$ is equal to $1$ for each $p\in F$. This can be simply achieved by normalizing some chosen $d\wp$ with the function $1/V$. Let now $dw$ be the symplectic volume form on $\C^{\ell_F^++\ell_F^-}$ with respect to the standard symplectic structure on $\C^{\ell_F^++\ell_F^-}$. Then we claim that the product measure $d\wp\,dw$ on $P_F\times\C^{\ell_F^++\ell_F^-}$ fulfills
\bq
d\wp\,dw=|\Pi_{F}^\ast\eta_F\wedge \tilde \pi_{F}^\ast d[\wp,w]|,\label{eq:meas2}
\eq
where  $\tilde\pi_{F}:  P_F\times\C^{\ell_F^++\ell_F^-} \rightarrow P_F\times_{K_F}\C^{\ell_F^++\ell_F^-}$ is the fiber bundle projection, and $\Pi_{F}: P_F\times\C^{\ell_F^++\ell_F^-}\to P_F$ is the projection onto the first factor. The relation \eqref{eq:meas2} can be proved as follows. By \eqref{eq:meas1}, we have
\bq
d\wp\,dw=|\Pi_{F}^\ast\eta_F\wedge (\pi_{F}\circ\Pi_{F})^\ast d F\wedge dw|,\label{eq:83083908320}
\eq
where we identified $dw$ with its pullback along the projection $P_F\times\C^{\ell_F^++\ell_F^-}\to \C^{\ell_F^++\ell_F^-}$ onto the second factor. Since \eqref{eq:meas2} is a pointwise relation, it suffices to establish it locally. Let therefore $p\in F$ and let $\mathcal U\subset M$ be a Darboux chart around $p$ such that $\mathcal U\cap F$ is the vanishing locus of the last $2(\ell_F^++\ell_F^-)$ coordinates in $\mathcal U$. Then the symplectic normal bundle  $E_F\cong P_F\times_{K_F}\C^{\ell_F^++\ell_F^-}$ of $F$ in $TM$ is trivial over $\mathcal U\cap F$ and with respect to this trivialization $E_F|_{\mathcal U\cap F}\cong (\mathcal U\cap F)\times \C^{\ell_F^++\ell_F^-}$ one has $d[\wp,w]|_{E_F|_{\mathcal U\cap F}}=dFdw$. This gives us on  $\tilde {\mathcal U}:=(\pi_{F}\circ\Pi_{F})^{-1}(\mathcal  U\cap F)\subset P_F\times\C^{\ell_F^++\ell_F^-}$ the relation $$(\pi_{F}\circ\Pi_{F})^\ast d F\wedge dw=\tilde \pi_F^\ast d[\wp,w],$$ proving \eqref{eq:meas2} on $\tilde {\mathcal U}$. Covering all of $P_F\times\C^{\ell_F^++\ell_F^-}$ with sets of the form $\tilde {\mathcal U}$ finally proves \eqref{eq:meas2}.  Thanks to \eqref{eq:meas2} and the fiber volume normalization $V\equiv 1$, we now  have for any continuous function $f$ on $P_F\times_{K_F} \C^{\ell_F^++\ell_F^-}$ with compact support the equality
\bq
\int_{P_F\times \C^{\ell_F^++\ell_F^-}}\tilde \pi_F^*(f)\d\wp \d w   =\int_{P_F\times_{K_F} \C^{\ell_F^++\ell_F^-}}f([\wp,w])  \d[\wp,w].
\label{eq:integrallift}
\eq
Applying this to our integral $I^\zeta_{a\chi_F,\sigma}$ yields with Fubini
\begin{align*}
I^\zeta_{a\chi_F,\sigma}(\eps)
&=\int_\R \int_{\C^{\ell_F^++\ell_F^-}} e^{i\frac x{2\eps}(\eklm{Q_Fw,w}- 2\zeta_F) }\;  \bigg [\int_{P_F} (a\chi_F)(\Phi^{-1}_{F}(\tilde \pi_F(\wp,w))) \d \wp \bigg ] \d w\, \sigma(x)  \d x \\
&=:\int_\R \int_{\R^{\mathrm{codim}\,  F}} e^{i\frac x{2\eps}(\eklm{Q_Fw,w}- 2\zeta_F) }\;  \tilde a_F(w) \d w\,\sigma(x)   \d x,
\end{align*}
where we identified  $\C^{\ell_F^++\ell_F^-}$ with $\R^{2(\ell_F^++\ell_F^-)}=\R^{\mathrm{codim}\,  F}$. With respect to this identification, denote by
\bqn
n_F^+:=2 \ell_F^+, \qquad n_F^-:=2 \ell_F^-
\eqn
 the real dimensions of the positive and negative eigenspaces of $Q_F$, and assume first  that both $n_F^+\neq 0$ and $n_F^-\neq 0$. 
 Introducing polar coordinates $w^+=(w_1,\dots, w_{n_F^+})=r \theta^+\in \R^{n_F^+}$ and $w^-=(w_{n_F^++1},\dots, w_{\mathrm{codim}\,  F})=s \theta ^-\in \R^{n_F^-}$ in these directions with radii $r,s>0$ and $\theta^\pm\in S^{n_F^\pm-1}\subset \R^{n_F^\pm}$ and substituting $(w_j,w_{j+1})\mapsto |\lambda_j^F|^{-1/2}(w_j,w_{j+1})$ for $1\leq j \leq \mathrm{codim}\,  F-1$, $j\in 2\N-1$, the integral $I_{a\chi_F,\sigma}^\zeta (\eps)$ reads
\bq
\label{eq:27.10.2018}
I_{a\chi_F,\sigma}^\zeta (\eps) =\Lambda_F^{-1}\int_{-\infty}^\infty \int_0^\infty \int_0^\infty e^{i\frac x{2\eps}(r^2-s^2- 2\zeta_F) }\;  {\alpha_F(r,s)} \; \d r \d s\,\sigma(x)  \d x,
\eq
where the Jacobian  of the substitution is given by $\Lambda_F^{-1} r^{n_F^+-1} s^{n_F^--1}$ with the constant
\bq
\Lambda_F:=\prod_{j=1}^{\mathrm{codim}\,  F/2}|\lambda_j^F|\in \N\label{eq:LambdaF}
\eq
and we put, with $\d\theta^\pm$ denoting the standard round measure on the Euclidean unit sphere $S^{n_F^\pm-1}$,
\begin{align}\label{eq:SFrs}
\begin{split}
\alpha_F(r,s)&:=r^{n_F^+-1} s^{n_F^--1} S_F(r,s),\qquad 
S_F(r,s):=\int_{S^{n_F^+-1}} \int_{S^{n_F^--1}} \tilde a_F (r  \theta^+, s \theta^-) \d\theta^+ \d \theta^-,\\
 \tilde a_F(w) &:=\int_{P_F} (a\chi_F)\bigg(\Phi^{-1}_{F}\bigg(\tilde \pi_F\bigg(\wp,\frac{w_1}{|\lambda_1^F|^{\frac{1}{2}}},\frac{w_2}{|\lambda_1^F|^{\frac{1}{2}}},\ldots,\frac{w_{\mathrm{codim}\,  F-1}}{|\lambda_{\mathrm{codim}\,  F/2}^F|^{\frac{1}{2}}},\frac{w_{\mathrm{codim}\,  F}}{|\lambda_{\mathrm{codim}\,  F/2}^F|^{\frac{1}{2}}}\bigg)\bigg)\bigg)  \d \wp. 
\end{split}
\end{align}
The function $\tilde a_F$ is a local cutoff of the original amplitude $a$ which has been transformed using the normal form symplectomorphism $\Phi_F$.   
 Note that the double spherical mean $S_F(r,s)$ is symmetric in $r$ and $s$.  If $n_F^+=0$ and $n^-_F\neq 0$ or $n_F^+\neq 0$ and $n^-_F=0$, the integral \eqref{eq:Cespedes1737} can be written as
\bq
\label{eq:27.10.2018a}
I_{a\chi_F,\sigma}^\zeta (\eps) = \Lambda_F^{-1}\int_{-\infty}^\infty  \int_0^\infty e^{i\frac x{2\eps}(\pm r^2 - 2\zeta_F) }\;  {\alpha_F(r)}  \d r  \,\sigma(x)\d x,\qquad n_F^\mp=0,
\eq
where, with $\tilde a_F$ as in \eqref{eq:SFrs} and with $\d\theta$ denoting the standard round measure on $S^{\mathrm{codim}\,  F-1}$, one has
\begin{align}\label{eq:RSa}
\begin{split}
\alpha_F(r)&:= r^{\mathrm{codim}\,  F-1} S_F(r), \qquad S_F(r):=\int_{S^{\mathrm{codim}\,  F-1}}  \tilde a_F(r  \theta) \d\theta,
\end{split}
\end{align}
and the spherical mean $S_F(r)$ is symmetric in $r$.

\section{Asymptotic expansions}
\label{sec:19.3.2018}
  As before, consider a $2n$-dimensional symplectic manifold $(M,\omega)$  carrying a Hamiltonian action of $T=S^1$ with momentum map $\J:M \rightarrow \t^\ast$. We are now ready to derive an asymptotic expansion  of the generalized Witten integral $I_{a,\sigma}^\zeta(\varepsilon)$ introduced in  \eqref{int}. For this sake, we shall use the decomposition \eqref{eq:25.5.2019} of $I_{a,\sigma}^\zeta(\varepsilon)$ into a global regular part $I_{a\chi_\mathrm{top},\sigma}^\zeta (\eps)$ and a finite sum of potentially singular localized  integrals $I_{a\chi_F,\sigma}^\zeta (\eps)$ which are singular iff $J(F)=\zeta$, as can be read off from their presentation \eqref{eq:27.10.2018}. In the following, we shall determine asymptotic expansions for each of those localized integrals, which are at the heart of our results. 

\subsection{Contribution of the top stratum} \label{sec:3.2.1}  By Lemma \ref{lem:singval}  the momentum map is regular on $M_{\h_\mathrm{top}}$. Therefore Proposition \ref{prop:asymptreg} yields a complete stationary phase expansion for  $I_{a\chi_\mathrm{top},\sigma}^\zeta (\eps)$. The coefficients ${Q}^\zeta_{j,k}(a\chi_\mathrm{top})$ do have a geometric interpretation in terms of integrals over $\mathscr M^\zeta_\mathrm{top}$ and are smooth in $\zeta$. Let us next turn to the more interesting contributions localized in the neighborhoods $U_F$.

\subsection{Contributions of the indefinite fixed point set components} \label{sec:3.3.1} 
Let us start by considering an $F\in \F$  for which $Q_F$ is indefinite, so that $n_F^+\neq 0$ and $n_F^-\neq 0$ holds, our departing point being the integrals \eqref{eq:27.10.2018}. While in a previous version of this paper we followed an approach of Brummelhuis, Paul, and Uribe \cite{BPU}, we shall now follow a simpler approach kindly pointed out to us by Mich\`{e}le  Vergne.  The starting point is the following classical result of Whitney on extensions of even functions.
\begin{lem}[{\cite[Theorem \ 1 on p.\ 159 and Remark on p.\ 160]{whitney43}}]\label{lem:Whitney}Given $n\in \N$ and $i\in \{1,\ldots, n\}$, let $f\in \Cinft(\R^n)$ be a function that is even in the $i$-th variable, that is,  one has $$f(x_1,\ldots,x_i,\ldots, x_n)=f(x_1,\ldots,-x_i,\ldots, x_n)\qquad \forall\; x=(x_1,\ldots,x_n)\in \R^n.$$ 
Then there exists a function 
$g\in \Cinft(\R^n)$ such that $$f(x)=g(x_1,\ldots,x^2_i,\ldots, x_n)\qquad \forall\;x=(x_1,\ldots,x_n)\in \R^n. $$
\end{lem}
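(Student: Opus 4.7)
My plan is to reduce to the one-variable situation, treating the remaining coordinates as smooth parameters, and then combine an iterated division procedure with Borel's theorem on the realization of formal power series. After relabeling assume $i=n$ and write $x=(y,t)$ with $y\in\R^{n-1}$; the goal is to produce $g\in\Cinft(\R^n)$ with $f(y,t)=g(y,t^2)$. The strategy is: first construct the would-be $s$-Taylor coefficients of $g$ at $s=0$ as smooth functions of $y$, then realize them by a smooth $\tilde g$ via Borel, and finally show that the residual error is smooth as a function of $t^2$.

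Since $f(y,\cdot)$ is even, $\partial_t f(y,0)=0$ for every $y$, so Hadamard's lemma with parameters gives $F\in\Cinft(\R^n)$ with $\partial_t f(y,t)=tF(y,t)$; integrating in $t$ yields $f(y,t)-f(y,0)=t^2 H_1(y,t)$ with $H_1\in\Cinft(\R^n)$ again even in $t$. Iterating the same step on $H_1,H_2,\dots$ produces, for each $N\in\N_0$, a decomposition
\bqn
f(y,t)=\sum_{k=0}^{N-1}a_k(y)\,t^{2k}+t^{2N}H_N(y,t),\qquad a_k\in\Cinft(\R^{n-1}),\ H_N\in\Cinft(\R^n)\text{ even in }t.
\eqn
Borel's theorem with smooth parameters then furnishes $\tilde g\in\Cinft(\R^n)$ whose $s$-Taylor series at $s=0$ equals $\sum_{k\geq 0}a_k(y)\,s^k$ for every $y$. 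Setting $R(y,t):=f(y,t)-\tilde g(y,t^2)$, one obtains a smooth function which is even in $t$; comparing with the finite expansion above and writing $\tilde g(y,s)=\sum_{k=0}^{N-1}a_k(y)s^k+s^N\tilde H_N(y,s)$ yields $R(y,t)=t^{2N}\bigl(H_N(y,t)-\tilde H_N(y,t^2)\bigr)$, which gives the quantitative flatness estimate $|\partial_y^\alpha\partial_t^\ell R(y,t)|\leq C_{\alpha,\ell,N,K}|t|^{2N-\ell}$ on $y\in K$ compact, valid for every $N$ and $\ell\leq 2N$.

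The final step defines $r(y,s):=R(y,\sqrt{s})$ for $s\geq 0$ and $r(y,s):=0$ for $s\leq 0$, and checks that $r\in\Cinft(\R^n)$. On $s>0$ one has by the chain rule an expansion $\partial_s^k r(y,s)=\sum_{j=1}^k c_{k,j}\,s^{j/2-k}\,(\partial_t^j R)(y,\sqrt s)$, so the flatness bounds give $|\partial_y^\alpha\partial_s^k r(y,s)|\leq C\,s^{N-k}$ with $N$ arbitrarily large; hence all mixed partials of $r$ extend by $0$ to $s=0$, matching the identically zero extension on $s<0$, and $r$ is smooth. Setting $g:=\tilde g+r$ yields $g(y,t^2)=\tilde g(y,t^2)+R(y,t)=f(y,t)$. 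The main obstacle is this last step: the change of variables $t\mapsto\sqrt s$ is singular at the origin, and pointwise vanishing of $t$-derivatives of $R$ is not enough—one genuinely needs the quantitative, arbitrary-order flatness produced in the previous step to absorb the negative half-powers of $s$ generated by differentiating in $s$.
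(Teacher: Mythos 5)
The paper does not give a proof of this lemma; it cites Whitney's 1943 result directly, so there is no in-paper argument to compare against. Your proposal is a correct, self-contained proof, and each step checks out. The iterated Hadamard/division step produces, for every $N$, the finite expansion $f(y,t)=\sum_{k<N}a_k(y)\,t^{2k}+t^{2N}H_N(y,t)$ with $a_k$ smooth in $y$ and $H_N$ smooth and even in $t$; Borel's theorem with parameters then yields $\tilde g\in\Cinft(\R^n)$ whose $s$-Taylor jet at $s=0$ is $\sum_k a_k(y)s^k$; and $R(y,t):=f(y,t)-\tilde g(y,t^2)$ is flat to arbitrary order at $t=0$, with the crucial quantitative bound $|\partial_y^\alpha\partial_t^\ell R(y,t)|\leq C\,|t|^{2N-\ell}$ on compacts. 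You correctly identify that it is this quantitative bound, rather than mere pointwise vanishing of the $t$-derivatives, that absorbs the negative half-powers of $s$ produced by the chain rule, making $r(y,s)=R(y,\sqrt{s})$ for $s\geq 0$, extended by $0$ for $s\leq 0$, a $\Cinft$ function. Two minor points are worth making explicit: first, $R(y,0)=0$ since $\tilde g(y,0)=a_0(y)=f(y,0)$, so the two branches of $r$ agree at $s=0$ and $r$ is continuous; second, the passage from ``all partials of $r$ on $s\neq 0$ extend continuously by $0$ to $s=0$'' to ``$r\in\Cinft$'' uses, inductively, the classical fact that a continuous function which is $C^1$ off a point and whose derivative has a limit at that point is $C^1$ there. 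Your route is a different packaging of Whitney's original argument: Whitney shows that $u\mapsto f(y,\sqrt{u})$ on $u\geq 0$ satisfies the hypotheses of his extension theorem and invokes that theorem to extend past $u=0$, whereas you replace the extension theorem with Borel's theorem plus an explicit square-root extension of a flat function. The two are essentially equivalent in content; yours has the advantage of being self-contained modulo Borel's theorem and Hadamard's lemma.
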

This important result has a direct application to functions on $\R^2$ which are even in both variables: 
\begin{cor}\label{cor:Whitney}For every function $f\in \Cinft(\R^2)$ which is even in both variables, there is a function $g\in \Cinft(\R^2)$ such that $f(x,y)=g(x^2,y^2)$ for all $x,y\in \R$.
\end{cor}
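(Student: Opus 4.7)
The plan is to apply Whitney's Lemma (Lemma \ref{lem:Whitney}) twice in succession, once for each variable, with a symmetrization step in between to ensure the hypotheses are met.

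First, since $f$ is even in its first argument, Lemma \ref{lem:Whitney} applied with $n=2$ and $i=1$ yields a function $h\in \Cinft(\R^2)$ such that
\[
f(x,y) = h(x^2, y) \qquad \forall\; x,y \in \R.
\]
The naive idea is now to apply the same lemma to $h$ in the second variable. The obstacle is that we only know $h(u,y) = h(u,-y)$ for $u = x^2 \geq 0$: evenness of $f$ in $y$ gives $h(x^2,y) = f(x,y) = f(x,-y) = h(x^2,-y)$, which says nothing about $h(u,\cdot)$ for $u<0$. Hence $h$ need not be even in $y$ on all of $\R^2$, and Whitney's lemma does not apply directly.

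To get around this, I would symmetrize: set
\[
\tilde h(u,y) := \tfrac{1}{2}\bigl(h(u,y) + h(u,-y)\bigr),\qquad (u,y)\in \R^2.
\]
Then $\tilde h\in \Cinft(\R^2)$ and is manifestly even in $y$. Moreover, for every $x,y\in \R$,
\[
\tilde h(x^2, y) = \tfrac{1}{2}\bigl(h(x^2,y) + h(x^2,-y)\bigr) = \tfrac{1}{2}\bigl(f(x,y) + f(x,-y)\bigr) = f(x,y),
\]
where the last equality uses that $f$ is even in its second argument. So replacing $h$ by $\tilde h$ preserves the factorization $f(x,y) = \tilde h(x^2, y)$ while gaining the evenness property needed for the second application of Whitney.

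Now a second application of Lemma \ref{lem:Whitney}, this time with $n=2$ and $i=2$ to the smooth function $\tilde h$, produces a $g\in \Cinft(\R^2)$ such that $\tilde h(u,y) = g(u, y^2)$ for all $u,y\in \R$. Combining the two steps yields
\[
f(x,y) = \tilde h(x^2, y) = g(x^2, y^2)\qquad \forall\; x,y\in \R,
\]
as desired. The main (mild) obstacle is the symmetrization trick above; once that is in place, the rest is a direct double application of the already-stated Whitney extension lemma.
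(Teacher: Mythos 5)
Your argument is correct and is essentially the paper's own proof: apply Whitney with $i=1$, symmetrize in the second variable to restore evenness (using that $f$ is even in $y$ to check the factorization is preserved), then apply Whitney with $i=2$. The only difference is cosmetic naming of the intermediate functions.
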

\begin{proof}Given $f$, we apply Lemma \ref{lem:Whitney} with $i=1$ to get a function $\tilde h\in \Cinft(\R^2)$ with  $f(x,y)=\tilde h(x^2,y)$ for all $x,y\in \R$. The function $y\mapsto\tilde h(x,y)$ does not need to be even when $x<0$, but it suffices to put
\[
h(x,y):=\frac{1}{2}\big (\tilde h(x,y)+\tilde h(x,-y)\big),\qquad x,y\in \R, 
\]
to obtain a function $h\in \Cinft(\R^2)$ which is even in the second variable and satisfies  $h(x^2,y)=f(x,y)$ for all $x,y\in \R$. 
Applying now Lemma \ref{lem:Whitney} with $i=2$ to  $h$ gives us the desired function $g\in\Cinft(\R^2)$.  
\end{proof}
As a consequence, we can write the spherical mean $S_F(r,s)$, which is a compactly supported even function in both variables $r$ and $s$, in the form
\bq
S_F(r,s)=\mathscr{S}_F(r^2,s^2)\qquad r,s\in \R, \label{eq:keysimpl}
\eq
with a function $\mathscr{S}_F\in \CT(\R^2)$. Indeed, Corollary \ref{cor:Whitney} gives us a function $\mathscr{G}_F\in \Cinft(\R^2)$ satisfying the analog of \eqref{eq:keysimpl}; a function $\mathscr{S}_F$ as desired can then be constructed by multiplying $\mathscr{G}_F$ with an arbitrary cutoff function equal to $1$ on the compact set $\{(r,s):(\sqrt{|r|},\sqrt{|s|})\in \supp S_F\}\subset \R^2$. This reduces the study of the integrals \eqref{eq:27.10.2018} to the general study of integrals of the form
\bq
\label{eq:09.03.2020}
I^\zeta_{\mathscr{S},\sigma}(\eps):=\int_{\R}\int_{0}^\infty \int_0^\infty e^{i(r^2-s^2-\zeta)x/\eps}r^{2L^+-1}s^{2L^--1}\mathscr{S}(r^2,s^2)\d r \d s\, \sigma(x)\d x,\qquad \eps>0,\; \zeta \in \R,
\eq
where  $L^+,L^-\geq 1$ are two natural numbers  and $\mathscr{S}\in \CT(\R^2)$, $\sigma\in \S(\R)$ are functions,  $\S(\R)$ denoting the space of Schwartz functions on $\R$.  

The first crude, but central  asymptotics are obtained in the following 

\begin{prop}\label{prop:09.03.2020}
If $\pm\zeta>0$,  one has for every $M \in \N_0$ the asymptotics
\begin{align*}
I^\zeta_{\mathscr{S},\sigma}(\eps)
&=\eps\Bigg(\sum_{j=0}^{M}\eps^j\sigma^{(j)}(0)\sum_{k=0}^{L}\zeta^{k}\int_{|\zeta|}^\infty \bigg[\sum_{l=k}^{\min(k+j,L)} c_{j,k,l}\,t^{L-l}(\partial_--\partial_+)^{j-l+k}\bigg] \mathscr{S}\Big(\frac{t+\zeta}{2},\frac{t-\zeta}{2}\Big)\d t \\
&+\sum_{k=0}^L|\zeta|^{k}\sum_{j=L-k+1}^{M}\eps^j\sigma^{(j)}(0) \sum_{\max(0,j-L-1)\leq p+q\leq k+j-L-1}c^\mp_{j,k,p,q}(\mp \partial_\pm)^p(\partial_--\partial_+)^q\mathscr{S}((|\zeta|,|\zeta|)_\pm)\Bigg) \\
&+\O_{M}\bigg(\eps^{M+2}(1+|\zeta|^{L}+|\zeta|^{-M-1})\sum_{l=0}^{2(M+1)}\sum_{r=0}^{M+L+1}\norm{D_{l,M} \mathscr S}_\infty\int_\R|\hat\sigma(u)|(1+|u|)^{r}\d u\bigg),
\end{align*}
where $D_{l,M}$ is a differential operator of order $\leq M+1$ on $\R^2$. If $\zeta=0$, one has 
\begin{align*}
I^0_{\mathscr{S},\sigma}(\eps)
&=\eps\Bigg(\sum_{j=0}^{M}\eps^j\sigma^{(j)}(0)  \int_{0}^\infty\bigg[\sum_{l=0}^{\min(j,L)} c_{j,0,l}\,t^{L-l}(\partial_--\partial_+)^{j-l}\bigg] \mathscr{S}\Big(\frac{t}{2},\frac{t}{2}\Big)\d t\\
&+\sum_{j=L+1}^{M}\eps^j\sum_{p+q=j-L-1} \Big[c^+_{j,0,p,q}\sigma^{[j]}_+(0) \partial_-^p+ c^-_{j,0,p,q}\sigma^{[j]}_-(0)(-\partial_+)^p\Big](\partial_--\partial_+)^q\mathscr{S}(0,0) \Bigg)\\
&+\O_{M}\bigg(\eps^{M+2} \sum_{l=0}^{2(M+1)}\sum_{r=0}^{M+L+1}\norm{D_{l,M} \mathscr S}_\infty\int_\R|\hat\sigma(u)|(1+|u|)^{r}\d u\bigg),
\end{align*}
where $L:=L^++L^--2$, $(x,y)_+:=(x,0)$, $(x,y)_-:=(0,y)$, the expressions $\sigma^{[j]}_\pm(0)$ are as in \eqref{eq:distrib0}, $\partial_{+}\mathscr{S}$ and $\partial_-\mathscr{S}$ are the partial derivatives of $\mathscr{S}$ with respect to the first and second variable, respectively,  and $c_{j,k,l},c^\pm_{j,k,p,q}\in \C$ are explicitly computable in terms of $L^+,L^-$. Some particular values are 
\begin{align}\begin{split}
c_{0,0,0}&=2^{-2-L}\pi,\\
 c^\pm_{L+1,0,0,0}&=2^{-2-L}\pi(-i)^{L-1} \sum_{l=0}^{L}\frac{(\pm 1)^{L-l+1}}{L-l+1}\sum_{\substack{l^++l^-=l\\  0\leq l^\pm\leq L^\pm-1}}(-1)^{l^+}{L^+-1 \choose {l^+}}{L^--1 \choose {l^-}}.\label{eq:constants000}\end{split}
\end{align}
\end{prop}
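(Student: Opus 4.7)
My plan is to reduce $I^\zeta_{\mathscr{S},\sigma}(\eps)$ to a single oscillatory integral and then expand it by Taylor's theorem combined with Fourier analysis, handling bulk and boundary contributions separately. First I substitute $u=r^2$, $v=s^2$ and then pass to the coordinates $t=u+v$, $\tau=u-v$ (so $u=(t+\tau)/2$, $v=(t-\tau)/2$, Jacobian $\tfrac12$), turning the integration domain into the cone $\{t\geq|\tau|\}$. After the $x$-integration $\int_\R e^{i(\tau-\zeta)x/\eps}\sigma(x)\,\d x=\hat\sigma((\zeta-\tau)/\eps)$ and the substitution $\xi=(\tau-\zeta)/\eps$, I arrive at
\[
I^\zeta_{\mathscr{S},\sigma}(\eps)=\frac{\eps}{8}\int_0^\infty\int_{-(t+\zeta)/\eps}^{(t-\zeta)/\eps}\hat\sigma(-\xi)\,P_t(\zeta+\eps\xi)\,\Psi_t(\zeta+\eps\xi)\,\d\xi\,\d t,
\]
where $P_t(\tau):=\big(\tfrac{t+\tau}{2}\big)^{L^+-1}\big(\tfrac{t-\tau}{2}\big)^{L^--1}$ is a polynomial of degree $L$ in $\tau$ and $\Psi_t(\tau):=\mathscr{S}\big(\tfrac{t+\tau}{2},\tfrac{t-\tau}{2}\big)$. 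I then Taylor expand both factors in $\eps\xi$ around $\tau=\zeta$, the polynomial $P_t$ exactly, and for $\Psi_t$ the chain-rule identity $\partial_\tau\Psi_t(\tau)=-\tfrac12(\partial_--\partial_+)\mathscr{S}\big(\tfrac{t+\tau}{2},\tfrac{t-\tau}{2}\big)$ produces the operator $(\partial_--\partial_+)$ of the statement.

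Next I split the $t$-integration at $t=|\zeta|$. On $t>|\zeta|$ the $\xi$-limits tend to $\pm\infty$ as $\eps\to 0^+$; using the Schwartz decay of $\hat\sigma$ to bound the missing tails by $\O(\eps^N)$, I extend the $\xi$-integration to $\R$ and apply $\int_\R\hat\sigma(-\xi)\xi^n\,\d\xi=2\pi i^n\sigma^{(n)}(0)$. Grouping by the order $\eps^j$ and expanding $P_t^{(q)}(\zeta)$ binomially in $\zeta$ reorganizes the double sum into the stated regular contributions indexed by $k$ (power of $\zeta$) and $l$ (power of $t$ removed from $t^L$); in particular $c_{0,0,0}=2^{-2-L}\pi$ arises as the product of the prefactor $\tfrac18$, the Fourier factor $2\pi$, and the normalization $2^{-L}$ inherited from $P_t(0)=(t/2)^L$. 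On $t<|\zeta|$ the $\xi$-range lies at distance at least $(|\zeta|-t)/\eps$ from the origin, and the Schwartz decay of $\hat\sigma$ gives a rapidly decaying contribution of size $\O((\eps/|\zeta|)^{N-L-1})$; taking $N=M+2$ produces the $|\zeta|^{-M-1}$ factor in the remainder.

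The singular boundary contributions $(\mp\partial_\pm)^p(\partial_--\partial_+)^q\mathscr{S}((|\zeta|,|\zeta|)_\pm)$ come from the corner of the integration region where the critical line $\tau=\zeta$ meets the boundary $\tau=\pm t$, at $(t,\tau)=(|\zeta|,\zeta)$. Iterated integration by parts in $t$ of the regular bulk integrals over $[|\zeta|,\infty)$ produces boundary evaluations at $t=|\zeta|$, where the factor $(|\zeta|\mp\zeta)/2$ vanishes, so only $\partial_\pm\mathscr{S}$ survives with the appropriate sign, yielding the operator $(\mp\partial_\pm)^p$, while the non-vanishing factor $((|\zeta|\pm\zeta)/2)^{\,\cdots}=|\zeta|^{\,\cdots}$ produces the weights $|\zeta|^k$. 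For $\zeta=0$ the corner sits at the origin; I split the $\xi$-integration at $\xi=0$ and use
\[
\int_0^\infty\hat\sigma(-\xi)\xi^n\,\d\xi=2\pi i^n\sigma^{[n]}_-(0),\qquad\int_{-\infty}^0\hat\sigma(-\xi)\xi^n\,\d\xi=2\pi i^n\sigma^{[n]}_+(0),
\]
to produce the $\sigma^{[j]}_\pm(0)$-distributions of the statement.

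The full remainder is controlled by the standard Taylor remainder for $\Psi_t$ combined with Schwartz integrability $\int_\R|\hat\sigma(u)|(1+|u|)^r\,\d u<\infty$ and the polynomial growth $(1+|\zeta|^L)$ of $P_t(\zeta+\eps\xi)$ on the integration range. The main obstacle is the combinatorial bookkeeping required to identify all coefficients $c_{j,k,l}$ and $c^\mp_{j,k,p,q}$ as explicit binomial sums arising from the Taylor expansions of $P_t$ and $\Psi_t$ and from the corner integrations by parts, and to verify the particular values in \eqref{eq:constants000}, of which only $c_{0,0,0}$ follows immediately from the leading-order computation sketched above.
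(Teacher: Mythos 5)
Your reduction to the linearized integral and your decision to split the $t$-integration at $t=|\zeta|$ are in the right spirit of the paper's \emph{destratification} strategy (the paper performs the equivalent steps in \eqref{eq:int85u3938}, Fubini'd so that the Fourier variable sits outermost). However, there is a genuine gap in how you handle the corner contributions, and it is precisely these contributions that produce the singular boundary terms $(\mp\partial_\pm)^p(\partial_--\partial_+)^q\mathscr{S}((|\zeta|,|\zeta|)_\pm)$. You claim that on $t>|\zeta|$ the ``missing tails'' from extending the $\xi$-integration to $\R$ are $\O(\eps^N)$ by Schwartz decay, and that the region $t<|\zeta|$ is rapidly decaying. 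Neither is true: the endpoint $(t-\zeta)/\eps$ tends to $0$ as $t\to|\zeta|^+$, so Schwartz decay gives no uniform gain, and after the $t$-integration (substitute $s=(t-\zeta)/\eps$ and Fubini) both error pieces are of size $\O(\eps)\cdot\eps/8=\O(\eps^2)$, not $\O(\eps^{M+2})$. Concretely, in the test case $L^+=L^-=1$ your recipe yields only the two bulk integrals at orders $\eps$ and $\eps^2$ and omits the term $-\tfrac{\pi i}{4}\eps^2\sigma'(0)\mathscr{S}(\zeta,0)$, which is exactly $c^-_{1,0,0,0}\eps^2\sigma'(0)\mathscr{S}((|\zeta|,|\zeta|)_+)$ from \eqref{eq:constants000}.

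The mechanism you propose to recover the boundary terms --- ``iterated integration by parts in $t$ of the regular bulk integrals'' --- is also not what is going on, and it contradicts your earlier claim that the missing tails are negligible (if they were, the expansion would consist of bulk terms only). The boundary terms arise as the derivative of the moving lower integration limit in the inner $t$-integral: in the paper's Fubini the function $F^\pm_{N,\zeta,\mathscr{S}}(v)=\int_{\pm(v-\zeta)}^\infty t^N\mathscr{S}(\cdots)\,\d t$ is Taylor-expanded in $v=\eps u$ and Lemma~\ref{lem:taylor} shows each derivative splits into a bulk integral plus point evaluations at $t=|\zeta|$ coming from the $v$-dependent lower limit. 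In your Fubini order these same contributions are encoded in the non-negligible errors from (a) restricting the $\xi$-range to $[-(t+\zeta)/\eps,(t-\zeta)/\eps]$ for $t>|\zeta|$ and (b) keeping the $t<|\zeta|$ region; you must expand both near the corner $(t,\tau)=(|\zeta|,\zeta)$ rather than discard them. As written, your proof obtains the regular bulk contributions but loses the entire second family of terms in the Proposition, together with the precise power counting in $|\zeta|^k$, and the remainder estimate $\O(\eps^{M+2})$ is not established.
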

To begin, notice that the critical set of the  phase function in \eqref{eq:09.03.2020}, regarded as a function on $\R^3$, becomes singular for $\zeta=0$.  One could therefore be inclined to desingularize the critical set in some way in order to be able to apply the stationary phase theorem. This was the way followed in  \cite{ramacher13}, which was sufficient to compute the leading term and an estimate for the remainder.  Nevertheless, serious difficulties arise when trying to find a complete asymptotic expansion. Instead, the proof of Proposition \ref{prop:09.03.2020} will be  based on a  \emph{destratification process}, which we shall carry out  in the following.      
As a first step, we linearize the phase function by means of the substitution $r^2=T$, $s^2=U$, yielding
\[
I^\zeta_{\mathscr{S},\sigma}(\eps)=\frac{1}{4}\int_{\R}\int_{0}^\infty \int_0^\infty e^{i(T-U-\zeta)x/\eps}T^{L^+-1}U^{L^--1}\mathscr{S}(T,U)\d T \d U\, \sigma(x)\d x.
\]
Performing the substitutions $U-T=u$, $U+T=t$ we then obtain 
with $L:=L^++L^--2$ the formula
\bqn
I^\zeta_{\mathscr{S},\sigma}(\eps)
=2^{-3-L}\int_{\R}\int_\R e^{-i(u+\zeta)x/\eps} \sigma(x) \int_{|u|}^\infty (t-u)^{L^+-1}(t+u)^{L^--1}\mathscr{S}\Big(\frac{t-u}{2},\frac{t+u}{2}\Big)\d t  \d x \d u,
\eqn
where the $t$-integrals correspond to integrals over the level sets $\mklm{(T,U) \mid U-T=u}\subset \R_+^2$.   The critical set of the linearized phase function\footnote{Note that the linearization of the phase function is not the result of a monomialization, so that  no desingularization of the critical set as in \cite{ramacher13} has taken place. In particular, as a consequence of a desingularization, an exceptional divisor would have to appear, which is not the case here.} now consists of the single point $(u,x)=(-\zeta,0)$, but a stationary phase analysis is not possible since the amplitude is not smooth at $u=0$ due to the integral limit $|u|$. Instead, we carry out the Fourier transform on the Lie algebra, and split the $u$-integral at $0$ in order to obtain smooth coefficients.  Expanding in addition the binomial expressions $(t\mp u)^{L^\pm-1}$ and substituting $u \mapsto \eps u-\zeta$, we arrive at  
\begin{align}\begin{split}
I^\zeta_{\mathscr{S},\sigma}(\eps)
&=2^{-3-L}\eps \sum_{l=0}^{L}c_{l}\Bigg[\int_{\zeta/\eps}^\infty \hat \sigma(u)(\eps u-\zeta)^{l} \int_{\eps u-\zeta}^\infty t^{L-l}\mathscr{S}\Big(\frac{t- \eps u+\zeta }{2},\frac{t+ \eps u-\zeta}{2}\Big)\d t  \d u\\
&+\int_{-\infty}^{\zeta/\eps } \hat \sigma(u) (\eps u-\zeta)^{l} \int_{-\eps u+\zeta}^\infty t^{L-l}\mathscr{S}\Big(\frac{t-\eps u+\zeta}{2},\frac{t+ \eps  u-\zeta }{2}\Big)\d t  \d u\Bigg]\label{eq:int85u3938}\end{split}
\end{align}
with
\bq
c_{l}:=\sum_{\substack{l^++l^-=l\\ 0\leq  l^\pm\leq L^\pm-1}}(-1)^{l^+}{L^+-1 \choose {l^+}}{L^--1 \choose {l^-}},\qquad l\in \N_0.\label{eq:cl}
\eq
In order to obtain an expansion in powers of $\eps$, it is natural to  Taylor expand the $t$-integral at $\eps=0$ which, by the following lemma,  will result  in  a separation into singular and regular contributions. 

\begin{lem}\label{lem:taylor}For $N\in \N_0$, $\zeta\in \R$, and $\mathscr{S}\in \CT(\R^2)$,  define two functions $F^\pm_{N,\zeta,\mathscr{S}}\in \Cinft(\R)$ by
\[
F^\pm_{N,\zeta,\mathscr{S}}(v):=\int_{\pm(v -\zeta)}^\infty t^{N}\mathscr{S}\Big(\frac{t-v+\zeta}{2},\frac{t+v-\zeta}{2}\Big)\d t.
\]
Then, for $m\in \N_0$, the $m$-th derivative of $F^\pm_{N,\zeta,\mathscr{S}}$ is of the form
\bqn\begin{split}
&(F^\pm_{N,\zeta,\mathscr{S}})^{(m)}(v)=\frac{1}{2^m} \int_{\pm(v -\zeta)}^\infty t^{N}(\partial_--\partial_+)^m \mathscr{S}\Big(\frac{t-v+\zeta}{2},\frac{t+v-\zeta}{2}\Big)\d t \\
 &+\sum_{i=0}^{m-1}(\mp 1)^{m+i}(\pm(v -\zeta))^{\max(0,N+1-m+i)}\sum_{p+q=i}C_{N,m,p,q}(\pm \partial_\mp)^p(\partial_--\partial_+)^q\mathscr{S}((\zeta-v,v-\zeta)_\mp),
\end{split}
\eqn
where the notation is as in Proposition \ref{prop:09.03.2020}, and the constants $C_{N,m,p,q}\in \R$ satisfy \bq\begin{alignedat}{2}
C_{N,m,m-1,0}&=1,\qquad  &C_{N,N+1,0,0}&=(-1)^NN!,\\
C_{N,m,0,m-1}&=2^{1-m},\qquad &C_{N,m,p,q}&= 0\quad \text{if }p+q< \max(0,m-1-N).\label{eq:CNmpq}
\end{alignedat}\eq
\end{lem}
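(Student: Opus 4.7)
The argument is by induction on $m$, with the base case $m=0$ being immediate (the empty boundary sum is vacuous and the integral term is the definition of $F^\pm_{N,\zeta,\mathscr{S}}$). The substitution $w:=v-\zeta$ shows that the formula depends on $(v,\zeta)$ only through $w$, so the inductive step may be carried out under the simplifying assumption $\zeta=0$: differentiate the proposed expression for $(F^\pm_{N,0,\mathscr{S}})^{(m)}(v)$ once more in $v$ and organize the result into the target shape for $(F^\pm_{N,0,\mathscr{S}})^{(m+1)}(v)$.

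Three sources of contribution appear. First, Leibniz's rule applied to $\frac{1}{2^m}\int_{\pm v}^\infty t^N(\partial_--\partial_+)^m \mathscr{S}((t-v)/2,(t+v)/2)\d t$ produces the boundary contribution $\mp 2^{-m}(\pm v)^N(\partial_--\partial_+)^m\mathscr{S}((-v,v)_\mp)$ together with the new integral term at level $m+1$ (the extra chain-rule factor $(\partial_--\partial_+)/2$ matches the prescribed $2^{-m-1}$). Second, differentiating the polynomial prefactor $(\pm v)^{\max(0,N+1-m+i)}$ in each existing boundary summand yields $\pm(N+1-m+i)(\pm v)^{\max(0,N-m+i)}$ whenever the original exponent is strictly positive, contributing at the same index $i$ and same $(p,q)$. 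Third, the chain rule applied to the evaluation point $(-v,v)_\mp$ inserts an extra factor $\pm\partial_\mp$, which combines with $(\pm\partial_\mp)^p$ to produce $(\pm\partial_\mp)^{p+1}$, thus contributing at index $i+1$ with $(p,q)\mapsto(p+1,q)$.

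Comparing these three sources against the target formula at level $m+1$ and absorbing the sign factors $(\mp 1)^{m+i}$ (using $(\mp 1)^{m+1+i}(\mp 1)=(\mp 1)^{m+i}$ and $(\mp 1)^{m+i+2}=(\mp 1)^{m+i}$) yields the recursion
\[
C_{N,m+1,p,q}=C_{N,m,p-1,q}\;-\;(N+1-m+p+q)\,C_{N,m,p,q},
\]
where $C_{N,m,-1,q}:=0$ and the second summand is omitted when $N+1-m+p+q\leq 0$, together with the seed $C_{N,m+1,0,m}=2^{-m}$ supplied by the integral-boundary contribution of step one. The initial value $C_{N,1,0,0}=1$ is read off directly from $(F^\pm_{N,0,\mathscr{S}})^{(1)}$, and these data together with the recursion determine all coefficients.

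The special values in \eqref{eq:CNmpq} now follow by inspection. Propagating $C_{N,1,0,0}=1$ along the \emph{shift branch} alone gives $C_{N,m,m-1,0}=1$, since the reduction branch contributes nothing to this entry ($C_{N,m-1,m-1,0}$ sits outside the allowed range of indices at level $m-1$). The identity $C_{N,m,0,m-1}=2^{1-m}$ is supplied directly by the seed at each step. Iterating $C_{N,m+1,0,0}=-(N+1-m)C_{N,m,0,0}$ from $m=1$ up to $m=N$ yields $C_{N,m+1,0,0}=(-1)^m N!/(N-m)!$, and at $m=N$ this reads $C_{N,N+1,0,0}=(-1)^N N!$. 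Finally, the vanishing $C_{N,m,p,q}=0$ for $p+q<\max(0,m-1-N)$ is preserved by the recursion: below the threshold, the reduction branch either has a vanishing linear factor $N+1-m+p+q$ or is itself sub-threshold at level $m$, while the shift branch is sub-threshold at level $m$ by the inductive hypothesis. The main difficulty is purely bookkeeping---tracking signs, index shifts, and the $\max(0,\cdot)$ case split---since compactness of $\supp\mathscr{S}$ makes differentiation under the integral sign unproblematic and all boundary evaluations well-defined.
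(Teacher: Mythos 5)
Your argument is correct and follows essentially the same route as the paper's: induction on $m$, differentiating the level-$m$ formula once and sorting the resulting terms into the integral boundary contribution (which supplies the seed $C_{N,m+1,0,m}=2^{-m}$), the derivative of the polynomial prefactor (reduction branch), and the chain rule at the evaluation point (shift branch); your compact recursion with the conventions $C_{N,m,-1,q}=0$ and omission of the vanishing linear factor reproduces exactly the paper's case-by-case recursion, and your derivations of the four special values of $C_{N,m,p,q}$ match the paper's. The only deviation is the welcome simplification of reducing to $\zeta=0$ via $F^\pm_{N,\zeta,\mathscr{S}}(v)=F^\pm_{N,0,\mathscr{S}}(v-\zeta)$, which the paper does not exploit and which streamlines the bookkeeping without changing the substance.
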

\begin{proof}
For $m=0$ the claim is trivially true; there are no constants $C_{N,0,p,q}$ because the sum over $i$ is empty. For $m=1$ we get
\begin{align*}
(F^\pm_{N,\zeta,\mathscr{S}})^{(1)}(v)&=\frac{d}{dv}\int_{\pm(v -\zeta)}^\infty t^{N}\mathscr{S}\Big(\frac{t-v+\zeta}{2},\frac{t+v-\zeta}{2}\Big)\d t\\
&=\frac{1}{2} \int_{\pm(v -\zeta)}^\infty t^{N}(\partial_--\partial_+)\mathscr{S}\Big(\frac{t-v+\zeta}{2},\frac{t+v-\zeta}{2}\Big)\d t \mp (\pm(v -\zeta))^{N}\mathscr{S}((\zeta-v,v-\zeta)_\mp),
\end{align*}
so that the claim holds. Assuming now that it holds for some $m\geq 1$, we obtain
\begin{align*}
&(F^\pm_{N,\zeta,\mathscr{S}})^{(m+1)}(v)\\
&=\frac{d}{dv}\bigg(\frac{1}{2^m} \int_{\pm(v -\zeta)}^\infty t^{N}(\partial_--\partial_+)^m \mathscr{S}\Big(\frac{t-v+\zeta}{2},\frac{t+v-\zeta}{2}\Big)\d t \\
&\qquad+ \sum_{i=0}^{m-1}(\mp 1)^{m+i}(\pm(v -\zeta))^{\max(0,N+1-m+i)}\sum_{p+q=i}C_{N,m,p,q}(\pm \partial_\mp)^p(\partial_--\partial_+)^q\mathscr{S}((\zeta-v,v-\zeta)_\mp)\bigg)\\
&=\frac{1}{2^{m+1}} \int_{\pm(v -\zeta)}^\infty t^{N}(\partial_--\partial_+)^{m+1} \mathscr{S}\Big(\frac{t-v+\zeta}{2},\frac{t+v-\zeta}{2}\Big)\d t\\
&\qquad\mp\frac{1}{2^m} (\pm(v -\zeta))^{N}(\partial_--\partial_+)^m \mathscr{S}((\zeta-v,v-\zeta)_\mp) \\
&\qquad+\sum_{i=\max(0,m-N)}^{m-1}(\mp 1)^{m+i}(\pm 1)(N+1-m+i)(\pm(v -\zeta))^{\max(0,N+1-m+i)-1}\\
&\qquad\qquad\qquad\qquad\cdot\sum_{p+q=i}C_{N,m,p,q}(\pm \partial_\mp)^p(\partial_--\partial_+)^q\mathscr{S}((\zeta-v,v-\zeta)_\mp)
\\
&\qquad+\sum_{i=0}^{m-1}(\mp 1)^{m+i}(\pm(v -\zeta))^{\max(0,N+1-m+i)}\sum_{p+q=i}C_{N,m,p,q}(\pm \partial_\mp)^{p+1}(\partial_--\partial_+)^q\mathscr{S}((\zeta-v,v-\zeta)_\mp).
\end{align*}
Taking into account that $\max(0,N+1-m+i)\geq 1$ in the summand of $\sum_{i=\max(0,m-N)}^{m-1}$ and performing the 
substitutions $i\mapsto i+1$, $p\mapsto p-1$ in the final sums, the expression for $(F^\pm_{N,\zeta,\mathscr{S}})^{(m+1)}(v)$ becomes
\begin{align*}
&\frac{1}{2^{m+1}} \int_{\pm(v -\zeta)}^\infty t^{N}(\partial_--\partial_+)^{m+1} \mathscr{S}\Big(\frac{t-v+\zeta}{2},\frac{t+v-\zeta}{2}\Big)\d t\\
&\mp\frac{1}{2^m} (\pm(v -\zeta))^{N}(\partial_--\partial_+)^m \mathscr{S}((\zeta-v,v-\zeta)_\mp) \\
&-\sum_{i=\max(0,m-N)}^{m-1}(\mp 1)^{m+1+i}(N+1-m+i)(\pm(v -\zeta))^{\max(0,N+1-m-1+i)}\\
&\qquad\qquad\qquad\qquad\cdot\sum_{p+q=i}C_{N,m,p,q}(\pm \partial_\mp)^p(\partial_--\partial_+)^q\mathscr{S}((\zeta-v,v-\zeta)_\mp)
\\
&+\sum_{i=1}^{m}(\mp 1)^{m+1+i}(\pm(v -\zeta))^{\max(0,N+1-m-1+i)}\sum_{\substack{p+q=i\\ p\geq 1}}C_{N,m,p-1,q}(\pm \partial_\mp)^{p}(\partial_--\partial_+)^q\mathscr{S}((\zeta-v,v-\zeta)_\mp).
\end{align*}
This is of the claimed form with
\[
C_{N,m+1,p,q}=\begin{cases}2^{-m},\qquad &p=0,q=m\\
C_{N,m,p-1,q},\qquad &p+q=m,\;p\geq 1,\\
C_{N,m,p-1,q}-(N+1-m+p+q)C_{N,m,p,q},\quad &m-1\geq p+q\geq \max(0,m-N),\; p\geq 1,\\
-(N+1-m+q)C_{N,m,0,q},\quad &m-1\geq q\geq \max(0,m-N),\;p=0,\\
C_{N,m,p-1,q}, & 1 \leq p+q\leq \max(0,m-N)-1,\;p\geq 1,\\
0, & \text{else}.
\end{cases}
\]
The first two lines prove the two equations on the left in \eqref{eq:CNmpq} since $C_{N,m,m-1,q}=1$ by the induction hypothesis. In the only case above where $p+q<\max(0,m-N)$, we have $C_{N,m+1,p,q}=C_{N,m,p-1,q}$, which is zero by the induction hypothesis because $p-1+q<\max(0,m-1-N)$. Finally, inspecting the case $q=p=0$ yields $C_{N,m+1,0,0}=-(N+1-m)C_{N,m,0,0}$ for $1\leq m\leq N$, $C_{N,1,0,0}=1$, which gives us by iteration the desired formula $C_{N,N+1,0,0}=(-1)^NN!$, so that \eqref{eq:CNmpq} is fully verified.
\end{proof}
As an immediate consequence, we get
\begin{cor}\label{cor:taylorremainder} In the situation of Lemma \ref{lem:taylor},  
there are differential operators $D^\pm_{N,m}$ of order $m$ on $\R^2$ such that 
\bq
|(F^\pm_{N,\zeta,\mathscr{S}})^{(m)}(v)|\leq (1+|v-\zeta|^N)\big\Vert D^\pm_{N,m}\mathscr{S}\big\Vert_\infty\qquad \forall\;v, \zeta\in \R.
\eq\qed
\end{cor}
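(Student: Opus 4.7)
The plan is to apply Lemma \ref{lem:taylor} directly: it expresses $(F^\pm_{N,\zeta,\mathscr{S}})^{(m)}(v)$ as the sum of an integral term
\[
I_m := \frac{1}{2^m}\int_{\pm(v-\zeta)}^\infty t^N (\partial_- - \partial_+)^m \mathscr{S}\Big(\tfrac{t-v+\zeta}{2},\tfrac{t+v-\zeta}{2}\Big)\,dt
\]
and a finite sum $B_m$ of boundary terms indexed by $0\leq i\leq m-1$ and $p+q=i$. I would then bound $|I_m|$ and $|B_m|$ separately, each by a quantity of the form $(1+|v-\zeta|^N)\|D\mathscr{S}\|_\infty$ with $D$ a differential operator of order $\leq m$, and finally combine them by taking $D^\pm_{N,m}$ to be a single order-$m$ operator dominating the finitely many auxiliary operators that arise.

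For the boundary contribution $B_m$, each summand has the form $c\cdot(\pm(v-\zeta))^{\max(0,N+1-m+i)}\cdot(\pm\partial_\mp)^p(\partial_--\partial_+)^q\mathscr{S}((\zeta-v,v-\zeta)_\mp)$. Since $i\leq m-1$ implies $\max(0,N+1-m+i)\leq N$, the polynomial factor satisfies $|v-\zeta|^{\max(0,N+1-m+i)}\leq 1+|v-\zeta|^N$. The derivative $(\pm\partial_\mp)^p(\partial_--\partial_+)^q$ is a constant-coefficient differential operator of order $p+q\leq m-1$ evaluated at a single point, so it is controlled by the supremum norm of its action on $\mathscr{S}$. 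Summing finitely many such estimates yields $|B_m|\leq(1+|v-\zeta|^N)\|D_B\mathscr{S}\|_\infty$ for some differential operator $D_B$ of order $\leq m-1$.

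The integral term $I_m$ requires the compact support of $\mathscr{S}$. After the substitution $s=t\mp(v-\zeta)$, the integration domain becomes $[0,\infty)$ and the arguments of $\mathscr{S}$ become $(\frac{s}{2},\frac{s}{2}\pm(v-\zeta))$ or the analogous expression with roles swapped. Since $\mathscr{S}\in\CT(\R^2)$, the integrand is supported in a bounded $s$-interval, and the whole expression vanishes when $|v-\zeta|$ exceeds twice the support radius of $\mathscr{S}$. Using the elementary inequality $|t|^N=|s\pm(v-\zeta)|^N\leq 2^N(|s|^N+|v-\zeta|^N)$, the $|s|^N$-part contributes a bounded integral absorbable into $\|(\partial_--\partial_+)^m\mathscr{S}\|_\infty$, while the $|v-\zeta|^N$-part produces exactly the factor $|v-\zeta|^N\leq 1+|v-\zeta|^N$ present in the claim. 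Hence $|I_m|\leq C(1+|v-\zeta|^N)\|(\partial_--\partial_+)^m\mathscr{S}\|_\infty$.

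Adding the bounds for $I_m$ and $B_m$ and combining the finitely many operators produced into a single order-$m$ operator $D^\pm_{N,m}$ yields the result. The only genuinely delicate point is the treatment of the polynomial weight $t^N$ in $I_m$: one needs to split it so that the growing part is exactly the allowed factor $|v-\zeta|^N$ and the remainder is integrable over the compact slice cut out by $\supp\mathscr{S}$. Everything else is either the direct consequence of the explicit formula in Lemma \ref{lem:taylor} or an elementary triangle-inequality style estimate.
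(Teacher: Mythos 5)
Your proof is correct and matches the paper's (implicit) approach: the corollary is stated without an explicit proof, being understood as an immediate consequence of the formula in Lemma \ref{lem:taylor}, and your argument simply estimates each term of that formula separately, noting $\max(0,N+1-m+i)\le N$ for the boundary terms and using compact support of $\mathscr{S}$ for the integral term.

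One small technicality is worth flagging. Your estimate of the integral term absorbs a constant proportional to the diameter of $\supp\mathscr{S}$ into $\Vert D^\pm_{N,m}\mathscr{S}\Vert_\infty$ (the ``bounded integral absorbable into the sup norm''), so the resulting operator $D^\pm_{N,m}$ implicitly depends on $\supp\mathscr{S}$. This is consistent with how the corollary is used in the paper (the later remainder bounds are for amplitudes with support in a fixed compact set $K$, so a $\supp\mathscr{S}$-dependent constant is harmless), but if one wants the operators to be independent of $\mathscr{S}$, one can avoid the support-radius argument: since $(T+U)^{N+2}(\partial_--\partial_+)^m\mathscr{S}$ is bounded by compactness of $\supp\mathscr{S}$, the integrand $t^N(\partial_--\partial_+)^m\mathscr{S}(\tfrac{t-v+\zeta}{2},\tfrac{t+v-\zeta}{2})$ admits a $t^{-2}$ majorant for $|t|>1$, and together with the trivial sup-norm bound on $|t|\le 1$ this controls the integral term by the sup norm of a genuinely $\mathscr{S}$-independent order-$m$ operator with polynomial coefficients. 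Either version suffices for the paper's purposes.
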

We are now ready to prove Proposition \ref{prop:09.03.2020}. 
\begin{proof}[Proof of Proposition \ref{prop:09.03.2020}]\label{proof:prop1}
We perform in \eqref{eq:int85u3938} for each $u$ a Taylor expansion with Lagrange remainder of the functions $\eps \to F^\pm_{N,\zeta,\mathscr{S}}(\eps u)$ at $\eps=0$, where $N=L-l$. With  Corollary \ref{cor:taylorremainder}  this yields for arbitrary Taylor cutoff orders $M^+,M^-\in \N_0$
\begin{align}
\nonumber I^\zeta_{\mathscr{S},\sigma}(\eps)
&=2^{-3-L}\eps \sum_{l=0}^{L}c_l\Bigg[\int_{\zeta/\eps }^\infty\hat\sigma(u )(\eps u-\zeta)^{l}\bigg(\sum_{m^+=0}^{M^+}\frac{(\eps u)^{m^+}(F^+_{L-l,\zeta,\mathscr{S}})^{(m^+)}(0)}{m^+!}\bigg) \d u + R^+_{\mathscr{S},\sigma,l,M^+}(\zeta,\eps)\\
&+\int_{-\infty}^{\zeta/\eps }\hat\sigma(u)(\eps u-\zeta)^{l} \bigg(\sum_{m^-=0}^{M^-}\frac{(\eps u) ^{m^-}(F^-_{L-l,\zeta,\mathscr{S}})^{(m^-)}(0)}{m^-!}\bigg) \d u + R^-_{\mathscr{S},\sigma,l,M^-}(\zeta,\eps)\Bigg],\label{eq:2819230658906}
\end{align}
where 
\begin{align*}
\nonumber |R^\pm_{\mathscr{S},\sigma,l,M^\pm}(\zeta,\eps)|&\leq \eps^{M^\pm+1}\frac{\big\Vert D^\pm_{L-l,M^\pm+1}\mathscr{S}\big\Vert_\infty}{(M^\pm+1)!}\int_{\R}|\hat\sigma(u)||\eps u-\zeta|^{l} |u|^{M^\pm+1}\big[1+\sup_{|t|\leq 1} |t\eps u-\zeta|^{L-l}\big]\d u\\
&=\O_{M^\pm}\bigg(\eps^{M^\pm+1}(1+|\zeta|^{L})\big\Vert D^\pm_{L-l,M^\pm+1}\mathscr{S}\big\Vert_\infty\sum_{r=0}^{M^\pm+L+1}\int_\R|\hat\sigma(u)|(1+|u|)^{r}\d u\bigg).
\end{align*}
Plugging in the derivatives at $0$ from Lemma \ref{lem:taylor}, choosing $M^+=M^-=M$, and expanding the binomial expression $(\eps u-\zeta)^{l}$, we arrive after some further basic manipulations at the formula
\begin{align*}
I^\zeta_{\mathscr{S},\sigma}(\eps)
&=2^{-3-L}\eps\sum_{j=0}^{M}\eps^j\sum_{l=0}^{L}(-1)^lc_l\sum_{k=0}^{\min(j,l)}{l\choose k}\frac{(-1)^{k}}{(j-k)!}\\
&\Bigg[\int_{\zeta/\eps}^\infty\hat\sigma(u)u^{j}\d u\Bigg(\frac{\zeta^{l-k}}{2^{j-k}} \int_{-\zeta}^\infty t^{L-l}(\partial_--\partial_+)^{j-k} \mathscr{S}\Big(\frac{t+\zeta}{2},\frac{t-\zeta}{2}\Big)\d t \\
&+ \sum_{m=0}^{j-k-1}(-1)^{m+1}(-\zeta)^{\max(l,L-m)-k}\sum_{p+q=j-k-m-1}C_{L-l,{j-k},p,q}\,\partial_-^p(\partial_--\partial_+)^q\mathscr{S}(0,-\zeta)\Bigg)  \\
&+\int_{-\infty}^{\zeta/\eps}\hat\sigma(u)u^{j}\d u\Bigg(\frac{\zeta^{l-k}}{2^{j-k}} \int_{\zeta}^\infty t^{L-l}(\partial_--\partial_+)^{j-k} \mathscr{S}\Big(\frac{t+\zeta}{2},\frac{t-\zeta}{2}\Big)\d t \\
&+ \sum_{m=0}^{{j-k}-1}\zeta^{\max(l,L-m)-k}\sum_{p+q=j-k-m-1}C_{L-l,{j-k},p,q}\,(-\partial_+)^p(\partial_--\partial_+)^q\mathscr{S}(\zeta,0)\Bigg) \Bigg]\\
&+\O_{M}\bigg(\eps^{M+2}(1+|\zeta|^{L})\sum_{l=0}^{2(M+1)}\sum_{r=0}^{M+L+1}\big\Vert D^l_{M}\mathscr{S}\big\Vert_\infty\int_\R|\hat\sigma(u)|(1+|u|)^{r}\d u\bigg)
\end{align*}
with a new family $\{D^l_M\}_{l}$ of differential operators defined by $D^+_{l,M+1}$ for $0\leq l\leq M+1$ and by $D^-_{l-M-1,M+1}$ for $M+2\leq l\leq 2M+2$. 
Next, we note that for $\zeta>0$ and each $j,N\in \N\cup \{0\}$ one has
\begin{align*}
\intop_{-\infty}^{\zeta/\eps}\hat\sigma(u)u^{j}\d u&=\intop_{-\infty}^{\infty}\hat\sigma(u)u^{j}\d u-\intop_{\zeta/\eps}^{\infty}\hat\sigma(u)u^{j}\d u=2\pi (-i)^{j}\sigma^{(j)}(0)-\intop_{\zeta/\eps}^{\infty}\hat\sigma(u)u^{j}\d u,\\
\bigg|\int_{\zeta/\eps}^\infty\hat\sigma(u)u^{j}\d u\bigg|&=\bigg|\int_{\zeta/\eps}^\infty u^{-N}\hat\sigma(u)u^{j+N}\d u\bigg|\leq \int_{\zeta/\eps}^\infty| u^{-N}\hat\sigma(u)u^{j+N}|\d u\leq \eps^{N}\zeta^{-N} 
\int_\R|\hat\sigma(u)u^{j+N}|\d u,
\end{align*}
and similarly for $\zeta<0$. This allows us to replace $\intop_{-\infty}^{\zeta/\eps}\hat\sigma(u)u^{j}\d u$ by $2\pi (-i)^{j}\sigma^{(j)}(0)$ up to an error estimated by arbitrarily high powers of $\eps$, at the cost of getting equally high  negative powers of $\zeta$. Together with the above estimates for the remainder of \eqref{eq:2819230658906}, we get the claimed remainder estimate.

To obtain the claimed form of the coefficients in the asymptotic expansion, we now substitute $k\mapsto l-k$, swap the sums over $k$ and $l$, restrict the range of $m$ using the vanishing relation in \eqref{eq:CNmpq}, and substitute $m\mapsto L-l-m+k$. This yields for $\pm\zeta>0$ 
\begin{align*}
&I^\zeta_{\mathscr{S},\sigma}(\eps)
=2^{-2-L}\pi\eps\sum_{j=0}^{M}\eps^j(-i)^{j}\sigma^{(j)}(0)\sum_{k=0}^{L}(-1)^k\sum_{l=k}^{\min(k+j,L)}{l\choose k}\frac{c_l}{(j-l+k)!}\\
&\cdot\Bigg(\frac{\zeta^{k}}{2^{j-l+k}} \int_{|\zeta|}^\infty t^{L-l}(\partial_--\partial_+)^{j-l+k} \mathscr{S}\Big(\frac{t+\zeta}{2},\frac{t-\zeta}{2}\Big)\d t \\
&+ \sum_{m=L-j+1}^{L-l+k}(\pm 1)^{L-l-m+k+1}  |\zeta|^{\max(k,m)}\sum_{p+q=m+j-L-1}C_{L-l,{j+k-l},p,q}\,(\mp \partial_\pm)^p(\partial_--\partial_+)^q\mathscr{S}((|\zeta|,|\zeta|)_\pm)\Bigg)
\end{align*}
up to the remainder term, and for $\zeta=0$ 
\begin{align}\begin{split}
I^0_{\mathscr{S},\sigma}(\eps)
&=2^{-2-L}\pi\eps\Bigg[\sum_{j=0}^{M}\eps^j(-i)^j\sigma^{(j)}(0) \sum_{l=0}^{\min(j,L)}\frac{c_l}{2^{j-l}(j-l)!} \int_{0}^\infty t^{L-l}(\partial_--\partial_+)^{j-l} \mathscr{S}\Big(\frac{t}{2},\frac{t}{2}\Big)\d t\\
&+\sum_{j=L+1}^{M}\eps^j (-i)^j\sum_{l=0}^{L}\frac{c_l}{(j-l)!}\Bigg(\sigma^{[j]}_-(0)\sum_{p+q=j-L-1}C_{L-l,{j-l},p,q}\,(-\partial_+)^p(\partial_--\partial_+)^q\mathscr{S}(0,0) \\
&+ (-1)^{L-l+1}\sigma^{[j]}_+(0) \sum_{p+q=j-L-1}C_{L-l,{j-l},p,q}\,\partial_-^p(\partial_--\partial_+)^q\mathscr{S}(0,0)\Bigg) \Bigg]\label{eq:3959166261901252}\end{split}
\end{align}
up to the remainder term. The formula for $\pm\zeta>0$ immediately leads us to define
\begin{align*}
c_{j,k,l}:=2^{-2-L}\pi(-i)^{j}(-1)^k{l\choose k}\frac{c_l}{2^{j-l+k}(j-l+k)!}.
\end{align*}
In particular, we find $c_{0,0,0}=2^{-2-L}\pi$ as claimed  since $c_0=1$. Similarly, $c^\pm_{j,k,p,q}$ can be computed explicitly from the formula for $\pm\zeta>0$ but in a much more complicated way than $c_{j,k,l}$ due to the presence of $\max(k,m)$ in the exponent of $|\zeta|$. However, we can read off $c^\pm_{j,0,p,q}$ from \eqref{eq:3959166261901252}, obtaining
\[
c^\pm_{j,0,p,q}:=2^{-2-L}\pi(-i)^j \sum_{l=0}^{L}(\mp 1)^{L-l+1}\frac{c_lC_{L-l,{j-l},p,q} }{(j-l)!},\qquad j\geq L+1.
\]
Finally, for the computation of $c_{L+1,0,0,0}$ we use that $C_{L-l,L+1-l,0,0}=(-1)^{L-l}(L-l)!$ by  \eqref{eq:CNmpq}.
\end{proof}

\subsection{Contributions of the definite fixed point set  components} \label{sec:3.3.2} 
It remains to study the less difficult case of a fixed point set component $F\in \F$ for which $Q_F$ is definite, so that one either has $n_F^+=\mathrm{codim} \, F$, $n_F^-=0$ or $n_F^-= \mathrm{codim} \, F$, $n_F^+=0$.  The spherical mean $S_F$ from \eqref{eq:RSa} is then an even function of only one variable that we write in the form
\bqn
S_F(r)=\mathscr{S}_F(r^2)\qquad r\in \R,
\eqn
with $\mathscr{S}_F\in \CT(\R)$ by applying again Whitney's classical result \cite{whitney43}. 
This reduces the study of the integrals \eqref{eq:27.10.2018a} to the general study of integrals of the form
\bq
\label{eq:09.03.20200}
I^{\pm,\zeta}_{\mathscr{S},\sigma}(\eps):=\int_{\R} \int_0^\infty e^{i(\pm r^2-\zeta)x/\eps}r^{2L-1}\mathscr{S}(r^2)\d r \, \sigma(x)\d x,\qquad \eps>0,\; \zeta \in \R,
\eq
where  $L\geq 1$ is a natural number  and $\mathscr{S}\in \CT(\R)$, $\sigma\in \S(\R)$ are functions. 
\begin{prop}\label{prop:4.3.def}If $\pm\zeta>0$, then one has for each $M\in \N_0$, $M\geq L-1$, the asymptotic estimates
\begin{align*}
I^{\pm,\zeta}_{\mathscr{S},\sigma}(\eps)
&=\eps\sum_{k=0}^{L-1} \zeta^k \sum_{j=L-1-k}^{M-k}\eps^{j} \sigma^{(j)}(0)(\pm 1)^{j+k}c_{j,k}\mathscr{S}^{(j+k+1-L)}(\pm \zeta)\\
&\qquad +\O_{M}\bigg(\eps^{M+2}(1+|\zeta|^{L-1}+|\zeta|^{-M-1})\sum_{r=0}^{M+L+1}\Big(\big\Vert\mathscr{S}^{(r)}\big\Vert_\infty+\int_\R|\hat\sigma(u)|(1+|u|)^{r}\d u\Big)\bigg),\\
I^{\mp,\zeta}_{\mathscr{S},\sigma}(\eps)
&=\O_{M}\bigg(\eps^{M+2}(1+|\zeta|^{L-1}+|\zeta|^{-M-1})\sum_{r=0}^{M+L+1}\Big(\big\Vert\mathscr{S}^{(r)}\big\Vert_\infty+\int_\R|\hat\sigma(u)|(1+|u|)^{r}\d u\Big)\bigg),
\end{align*}
where 
\[
c_{j,k}=\pi{L-1\choose k}\frac{i^j}{(j+k+1-L)!}.
\]
If $\zeta=0$, then one has for each $M\in \N_0$, $M\geq L-1$, the asymptotic estimates
\begin{align*}
I^{\pm,0}_{\mathscr{S},\sigma}(\eps)
&=\eps\sum_{j=L-1}^{M}\eps^{j}\sigma^{[j]}_\mp(0)(\pm 1)^j c_{j,0}\mathscr{S}^{(j+1-L)}(0)\\
&\qquad +\O_{M}\bigg(\eps^{M+2}\sum_{r=0}^{M+L+1}\Big(\big\Vert\mathscr{S}^{(r)}\big\Vert_\infty+\int_\R|\hat\sigma(u)|(1+|u|)^{r}\d u\Big)\bigg).
\end{align*}

\end{prop}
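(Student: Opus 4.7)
The proof will follow a simplified version of the destratification approach used for Proposition \ref{prop:09.03.2020}, exploiting the fact that the definite case involves only a single spherical radius and therefore only a one-dimensional amplitude $\mathscr{S}$. First I linearize the phase via the substitution $r^{2}=T$, obtaining
\[
I^{\pm,\zeta}_{\mathscr{S},\sigma}(\eps)=\tfrac{1}{2}\int_{\R}\int_{0}^{\infty} e^{i(\pm T-\zeta)x/\eps}\, T^{L-1}\mathscr{S}(T)\,\d T\,\sigma(x)\,\d x.
\]
Performing the $x$-integration yields a Fourier transform of $\sigma$: using $\int_{\R}e^{i\alpha x/\eps}\sigma(x)\,\d x=\hat\sigma(-\alpha/\eps)$ and substituting $T=\pm\zeta+\eps u$ (which places the critical point of the linearized phase at $u=0$) gives the expression
\[
I^{\pm,\zeta}_{\mathscr{S},\sigma}(\eps)=\tfrac{\eps}{2}\int_{\mp\zeta/\eps}^{\infty}(\pm\zeta+\eps u)^{L-1}\mathscr{S}(\pm\zeta+\eps u)\,\hat\sigma(\mp u)\,\d u,
\]
reducing the analysis to a single one-variable integral over a half-line whose endpoint depends on $\zeta/\eps$.

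Next I distinguish three cases according to the sign of $\zeta$. When $\pm\zeta>0$, the endpoint $\mp\zeta/\eps$ tends to $-\infty$, so after Taylor expanding both factors of the amplitude at $\eps=0$,
\[
(\pm\zeta+\eps u)^{L-1}=\sum_{k=0}^{L-1}\binom{L-1}{k}(\pm\zeta)^{L-1-k}(\eps u)^{k},\qquad \mathscr{S}(\pm\zeta+\eps u)=\sum_{n=0}^{M}\tfrac{(\eps u)^{n}}{n!}\mathscr{S}^{(n)}(\pm\zeta)+R_{M},
\]
the integration can be extended to the whole real line at the cost of a tail error that is $\O_{N}(\eps^{N}|\zeta|^{-N})$ for every $N$, thanks to the Schwartz decay of $\hat\sigma$. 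The resulting Fourier moments evaluate as $\int_{\R}u^{j}\hat\sigma(\mp u)\,\d u=2\pi(\pm i)^{j}\sigma^{(j)}(0)$, and after collecting terms by the variable change $k\mapsto L-1-k$ one obtains the stated expansion with $c_{j,k}=\pi\binom{L-1}{k}i^{j}/(j+k+1-L)!$; the Lagrange remainder contributes the $\|\mathscr{S}^{(r)}\|_{\infty}$-terms in the error bound. For the companion estimate on $I^{\mp,\zeta}_{\mathscr{S},\sigma}(\eps)$ with the \emph{opposite} sign, the corresponding integral runs from $\pm\zeta/\eps\to+\infty$, and since the integrand is polynomial $\times$ Schwartz, the whole integral is $\O_{N}(\eps^{N}|\zeta|^{-N})$; this reflects the geometric fact that $\pm T-\zeta$ has no critical point on $T\ge 0$ when $\pm\zeta<0$.

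The case $\zeta=0$ must be treated separately since the Taylor expansion point coincides with the boundary of the half-line. Here the substitution $T=\eps u$ gives directly
\[
I^{\pm,0}_{\mathscr{S},\sigma}(\eps)=\tfrac{\eps^{L}}{2}\int_{0}^{\infty}u^{L-1}\mathscr{S}(\eps u)\hat\sigma(\mp u)\,\d u,
\]
and Taylor expanding $\mathscr{S}(\eps u)$ at the origin produces moments of the form $\int_{0}^{\infty}u^{L-1+n}\hat\sigma(\mp u)\,\d u$, which by the very definition \eqref{eq:distrib0} equal $2\pi(\pm i)^{L-1+n}\sigma^{[L-1+n]}_{\mp}(0)$. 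Setting $j=L-1+n$ and comparing with $c_{j,0}=\pi i^{j}/(j+1-L)!$ yields the stated formula. The main technical point is bookkeeping of the remainder: for the principal $\pm\zeta>0$ expansion the error decomposes into a Lagrange-remainder piece controlled by $\|\mathscr{S}^{(M+1)}\|_{\infty}(1+|\zeta|^{L-1})\int|\hat\sigma(u)|(1+|u|)^{M+L+1}\d u\cdot\eps^{M+2}$ and a Schwartz-tail piece yielding the $|\zeta|^{-M-1}$ factor; for $\zeta=0$ the Schwartz-tail piece is absent. I do not expect any serious obstacle, since the absence of a second integration variable makes the destratification transparent and the only subtlety — the nonsmooth dependence on $\zeta$ at the singular value $\zeta=0$ — is already encoded in the appearance of $\sigma^{[j]}_{\pm}$ versus $\sigma^{(j)}$ in the three separate regimes.
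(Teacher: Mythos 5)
Your proof is correct and follows essentially the same route as the paper's: linearize via $r^2=T$, carry out the $x$-integral to introduce $\hat\sigma$, substitute to shift the endpoint to $\mp\zeta/\eps$, binomially and Taylor expand the amplitude, and then treat the three regimes separately — full-line Fourier moments $\int_\R u^j\hat\sigma(\mp u)\,du=2\pi(\pm i)^j\sigma^{(j)}(0)$ for the matching sign with a Schwartz-tail error, rapid $\O(\eps^N|\zeta|^{-N})$ decay for the opposite sign, and the one-sided moments $\sigma^{[j]}_\mp(0)$ at $\zeta=0$. The paper compresses the final case analysis to ``finish analogously to the proof of Proposition \ref{prop:09.03.2020},'' and your write-up is precisely the spelled-out version of that step.
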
\begin{proof}
After substituting $r^2=u$, $u\mapsto \eps u\pm \zeta$, we can write 
\begin{align*}
I^{\pm,\zeta}_{\mathscr{S},\sigma}(\eps)
&=\frac{\eps}{2}\sum_{j=0}^{L-1}\eps^j{L-1\choose j}(\pm \zeta)^{L-1-j} \int_{\mp \zeta/\eps}^\infty u^j\mathscr{S}(\eps u\pm \zeta)\hat\sigma(\mp u)\d u.
\end{align*}
Performing a Taylor expansion of the function $x\mapsto\mathscr{S}(x\pm \zeta)$ at $x=0$ yields, similarly as in \eqref{eq:2819230658906}, for every $M\in \N_0$
\begin{align*}
I^{\pm,\zeta}_{\mathscr{S},\sigma}(\eps)
&=\frac{\eps}{2}\sum_{m=0}^{L+M-1}\eps^{m}\int_{\mp \zeta/\eps}^\infty u^{m}\hat\sigma(\mp u)\d u\sum_{j=\max(0,m-M)}^{\min(m,L-1)}{L-1\choose j}(\pm \zeta)^{L-1-j}\frac{\mathscr{S}^{(m-j)}(\pm \zeta)}{(m-j)!}\\
&\qquad +\O_{M}\bigg(\eps^{L+M+1}(1+|\zeta|^{L-1})\sum_{r=0}^{M+L}\Big(\big\Vert\mathscr{S}^{(r)}\big\Vert_\infty+\int_\R|\hat\sigma(u)|(1+|u|)^{r}\d u\Big)\bigg).
\end{align*}
We can now finish the proof analogously to the proof of Proposition \ref{prop:09.03.2020}.
\end{proof}

\section{Geometric interpretation of the coefficients}
\label{sec:4}

We shall now interpret the coefficients obtained in the previous section geometrically for $\zeta=0$.  
\subsection{Local geometric interpretation}\label{sec:geomint}
As a first step, we specialize to the situation that in Propositions \ref{prop:09.03.2020} and \ref{prop:4.3.def} the function $\mathscr{S}=\mathscr{S}_f$ is given in terms of a spherical mean value of an arbitrary function $f\in \CT(\R^{\mathrm{codim} \, F})$. 
Turning first to the indefinite case we thus write
\bq
\label{eq:doublesphermean}
 S_f(r,s):=\int_{S^{n_F^+-1}} \int_{S^{n_F^--1}}   f (r  \theta^+, s \theta^-) \d\theta^+ \d \theta^-=:\mathscr{S}_f(r^2,s^2),\qquad r,s\in \R.
\eq
In order to interpret the coefficients  in Proposition \ref{prop:09.03.2020} for such functions $\mathscr{S}=\mathscr{S}_f$, we first observe the following fundamental relation between derivatives of $S_f$ and $\mathscr{S}_f$. For each $k\in \N_0$, one has
\begin{align}\begin{split}
\partial_+^k\mathscr{S}_f(t,u)&=\delta_r^k S_f(\sqrt{t},\sqrt{u})\qquad \forall\; (t,u)\in (0,\infty)\times [0,\infty),\\
 \partial_-^k\mathscr{S}_f(t,u)&=\delta_s^k S_f(\sqrt{t},\sqrt{u})\qquad \forall\; (t,u)\in [0,\infty)\times (0,\infty),\label{eq:derivrel}\end{split}
\end{align}
where, as in Proposition \ref{prop:09.03.2020}, $\partial_+\mathscr{S}_f$ and $\partial_-\mathscr{S}$ are the partial derivatives of $\mathscr{S}_f$ with respect to the first and second variable, respectively, and $\delta_r^k=(\delta_r)^k,$ $\delta_s^k=(\delta_s)^k$ are powers of the operators
\[
\delta_r S_f(r,s):=\frac{1}{2r}\frac{\partial S_f}{\partial r}(r,s)\quad \forall\; r>0,s\geq 0,\qquad\qquad \delta_s S_f(r,s):=\frac{1}{2s}\frac{\partial S_f}{\partial s}(r,s)\quad \forall\; r\geq 0,s>0.
\]
\begin{definition}For $\zeta \in \R$ we introduce the full, pointed, and slit quadrics
\begin{align*}
\Sigma^{\zetaF}&:=\big\{w\in \R^{\mathrm{codim} \, F}:\eklm{{Q_F}w,w}-2\zetaF=0\big\},\\
\Sigma_\bullet^{\zetaF}&:=\big\{w\in \R^{\mathrm{codim} \, F}\setminus\{0\}:\eklm{{Q_F}w,w}-2\zetaF=0\big\}\subset \Sigma^{\zetaF},\\
\Sigma_\times^{\zetaF}&:=\big\{w\in \R^{n_F^+}_\bullet\times \R^{n_F^-}_\bullet:\eklm{{Q_F}w,w}-2\zetaF=0\big\}\subset \Sigma_\bullet^{\zetaF},
\end{align*}
with the notation $\R^{n_F^\pm}_\bullet:=\R^{n_F^\pm}\setminus\{0\}$. 
\label{def:quadrics}
\end{definition}
Note that
\bq
\Sigma_\bullet^{\zetaF}=\Sigma_\times^{\zetaF} \iff \zetaF=0,\quad\qquad \Sigma_\bullet^{\zetaF}=\Sigma^{\zetaF}\iff \zetaF\neq 0.\label{eq:pointedslit}
\eq
The quadric $\Sigma^{\zetaF}$ is the local model for the level set $\J^{-1}(\zeta+\J(F))$ near $F$, with $\zeta=0$ corresponding to the level set of $\J(F)$. Expressing the coefficients in Proposition \ref{prop:09.03.2020} in terms of objects living on $\Sigma^{\zetaF}$ will be the first step towards an intrinsic geometric interpretation of the former. The pointed quadric $\Sigma_\bullet^{\zetaF}$ corresponds to the top stratum of $\J^{-1}(\zeta+\J(F))$, which is reflected by the second relation in \eqref{eq:pointedslit}. Finally, the slit quadric $\Sigma_\times^{\zetaF}$ consists of all points in $\Sigma^{\zetaF}$ which can be described by the inertial polar coordinates introduced in Section \ref{subsec:4.1}.  

There is a natural hypersurface measure on $\Sigma_\bullet^{\zetaF}$ induced by the symplectic measure $dw$ on $\R^{\mathrm{codim} \, F}$, defined by the volume form $d\Sigma^{\zetaF}:=\Theta_{F}|_{\Sigma_\bullet^{\zetaF}}$, where the $(\mathrm{codim} \, F-1)$-form $\Theta_F$ on 
 $\R^{\mathrm{codim} \, F}\setminus\{0\}$ is characterized uniquely near $\Sigma_\bullet^{\zetaF}$ by the relations
\bq
\label{eq:22.10.19}
\d w=\Theta_{F}\wedge\d q_F,\qquad \Theta_{F}|_{w}\in \Lambda^{\mathrm{codim} \, F-1}T^\ast_{w}\Sigma_\bullet^{\zetaF}\subset \Lambda^{\mathrm{codim} \, F-1}T^\ast_{w}\R^{\mathrm{codim} \, F}\quad \forall\; w\in \Sigma_\bullet^{\zetaF},
\eq 
where we wrote $q_F(w):=\frac{1}{2}\eklm{{Q_F} w,w}$. Let $T_F:\R^{\mathrm{codim} \, F}\to\R^{\mathrm{codim} \, F}$ be the isomorphism 
\bq
w=(w_1,\ldots,w_{\mathrm{codim}\,  F})\longmapsto\bigg(\frac{w_1}{|\lambda_1^F|^{\frac{1}{2}}},\frac{w_2}{|\lambda_1^F|^{\frac{1}{2}}},\ldots,\frac{w_{\mathrm{codim}\,  F-1}}{|\lambda_{\mathrm{codim}\,  F/2}^F|^{\frac{1}{2}}},\frac{w_{\mathrm{codim}\,  F}}{|\lambda_{\mathrm{codim}\,  F/2}^F|^{\frac{1}{2}}}\bigg)= T_F(w).\label{eq:TF}
\eq
Pulling back both sides of \eqref{eq:22.10.19} along $T_F$ yields, with $\Lambda_F$ as in \eqref{eq:LambdaF}, the  equation
\bq
\label{eq:pullbackforms11}
\Lambda_F^{-1}\d w=T_F^\ast\d w=T_F^\ast\Theta_{F}\wedge \d (q_F\circ T_F).
\eq 
We claim that, on the subset $\R^{n_F^+}_\bullet\times \R^{n_F^-}_\bullet\subset \R^{\mathrm{codim} \, F}\setminus\{0\}$, the form $T_F^\ast\Theta_{F}$ can be explicitly expressed in 
 terms of the inertial polar coordinates introduced in Section \ref{subsec:4.1} as follows:
\bq
T_F^\ast\Theta_{F}|_{T_F^{-1}(r\theta^+,s\theta^-)}=\Lambda_F^{-1} r^{n_F^+-1} s^{n_F^--1} \d \theta^+ \wedge \d \theta^- \wedge  \frac {r \d s + s \d r}{r^2+s^2},\quad (\theta^+,\theta^-)\in S^{n_F^+-1}\times S^{n_F^--1}.\label{eq:etapolarcoord}
\eq
This follows from \eqref{eq:pullbackforms11}, the equation $(q_F\circ T_F)(r\theta^+,s\theta^-)= \frac{1}{2}(r^2-s^2)$,  and the  equations
\begin{align*}
 d(q_F\circ T_F)\wedge \frac {r \d s + s \d r}{r^2+s^2}&= (r \d r-s \d s) \wedge \frac {r \d s + s \d r}{ r^2+s^2}=\d r \wedge \d s,\\
  dw&=r^{n_F^+-1} s^{n_F^--1}\d r \wedge  \d \theta^+ \wedge\d s\wedge \d \theta^-.
\end{align*}
Since $n^+_F\geq 2$, $n^-_F\geq 2$, and $T_F^\ast d\Sigma^{\zetaF}=T_F^\ast\Theta_{F}|_{T_F^{-1}(\Sigma_\bullet^{\zetaF})}$, we deduce from \eqref{eq:etapolarcoord} that the measure $d\Sigma^{\zetaF}$ is locally finite. Observe that $\Sigma_\times^{\zetaF}$ is of full measure in $\Sigma_\bullet^{\zetaF}$. Thus, we get for every $f\in \CT(\R^{\mathrm{codim} \, F})$
\begin{align}\begin{split}
\intop_{\Sigma_\bullet^{\zetaF}}  f \ d\Sigma^{\zetaF}&=\intop_{T_F^{-1}(\Sigma_\times^{\zetaF})}  f\circ T_F \ T_F^\ast d\Sigma^{\zetaF}\\
&=\Lambda_F^{-1}\intop_{\{r^2-s^2=2\zetaF,\; r,s>0\} } \frac{ r^{n^+_F}  s^{n_F^--1} S_{f\circ T_F} (r,s)\d s +   r^{n^+_F-1}  s^{n_F^-}S_{f\circ T_F} (r,s) \d r}{r^2+s^2}.\label{eq:inttransf}\end{split}
\end{align}
Note that on the integration domain in the second line we have the relation $rdr=sds$. 
 
Next, consider for $k\in \N_0$ the function $W_{F,k}\in \Cinft(\R^{n_F^+}_\bullet\times \R^{n_F^-}_\bullet)$ defined by 
\bq
W_{F,k}(w):=4\Lambda_F\norm{T_F^{-1}w}^{2k}\norm{T_F^{-1}w^+}^{2-n^+_F}\norm{T_F^{-1}w^-}^{2-n^-_F},\label{eq:WFk}
\eq
where we use the notation $w=(w^+,w^-)$ for an element in $\R^{n_F^+}\times \R^{n_F^-}=\R^{\mathrm{codim} \, F}$. Furthermore, we define a differential operator $D^+_F:\Cinft(\R^{n_F^+}_\bullet\times \R^{n_F^-})\to \Cinft(\R^{n_F^+}_\bullet\times \R^{n_F^-})$ and a differential operator $D^-_F:\Cinft(\R^{n_F^+}\times \R^{n_F^-}_\bullet)\to \Cinft(\R^{n_F^+}\times \R^{n_F^-}_\bullet)$ by
\bq
D^\pm_F(f)(w):=\frac{1}{2}\Big\langle\nabla f(w),\frac{w^\pm}{\norm{T_F^{-1}w^\pm}^2}\Big\rangle,\label{eq:Dpm}
\eq
where $\nabla f(w)\in \R^{\mathrm{codim} \, F}$ is the Euclidean gradient of $f$ at $w$ and $\eklm{\cdot,\cdot}$ is the standard inner product in $\R^{\mathrm{codim} \, F}$. The significance of these operators lies in the following observation, which we shall in fact only use for $\zetaF=0$, when the pointed and slit quadrics agree. 

\begin{prop}\label{prop:Dpm}Let $f\in \CT(\R^{\mathrm{codim} \, F})$ and let $f_{\times}$ be the restriction  of $f$ to $\R^{n_F^+}_\bullet\times \R^{n_F^-}_\bullet$. For each $k,l\in \N_0$, the function $W_{F,k} (D^+_F-D^-_F)^l f_{\times}$ is integrable over $\Sigma_\times^{\zetaF}$ with respect to $d\Sigma^{\zetaF}$ and one has
\[
\int_{\Sigma_\times^{\zetaF}}  W_{F,k} (D^-_F-D^+_F)^l f_{\times}\d\Sigma^{\zetaF}=\int_{|2\zetaF|}^{\infty}t^{k} (\partial_--\partial_+)^l\mathscr{S}_{f\circ T_F}\bigg(\frac{t+2\zetaF}{2},\frac{t-2\zetaF}{2}\bigg)\d t.
\]
Consequently, the integrals on the right hand side have a geometric meaning in the sense that they correspond to integrals over $\Sigma_\times^{\zetaF}$ with respect to the natural hypersurface measure  $d\Sigma^{\zetaF}$. 
\end{prop}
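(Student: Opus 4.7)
The plan is to pull the left-hand side back to $T_F^{-1}(\Sigma_\times^{\zetaF})$ via $T_F$, use \eqref{eq:inttransf} to rewrite it in inertial polar coordinates, and then show that under this pullback the operators $D^\pm_F$ act simply as the radial operators $\delta_r=(2r)^{-1}\partial_r$ and $\delta_s=(2s)^{-1}\partial_s$, whereupon everything collapses onto \eqref{eq:derivrel}.

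The first technical step is to verify the identities $(D^+_F\tilde f)\circ T_F=\delta_r(\tilde f\circ T_F)$ and $(D^-_F\tilde f)\circ T_F=\delta_s(\tilde f\circ T_F)$ for any smooth $\tilde f$, where the right-hand sides are interpreted in the polar variables $u=T_F^{-1}w=(r\theta^+,s\theta^-)$. Since $T_F$ is diagonal, the chain rule gives $\nabla_w\tilde f=T_F^{-1}\nabla_u(\tilde f\circ T_F)$; combined with $T_F^{-1}w^+=u^+$, the definition \eqref{eq:Dpm} reduces to $\frac{1}{2}\langle\nabla_u(\tilde f\circ T_F),u^+/\|u^+\|^2\rangle=(2r)^{-1}\partial_r(\tilde f\circ T_F)$, where $\partial_r$ denotes the radial derivative in $u^+$ with $\theta^+$ fixed. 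Iterating this and using that $\delta_r,\delta_s$ commute as operators in $(r,s)$ then yields $((D^-_F-D^+_F)^l f_\times)\circ T_F=(\delta_s-\delta_r)^l(f\circ T_F)$ on $\R^{n_F^+}_\bullet\times\R^{n_F^-}_\bullet$.

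The second step is to interchange spherical averaging with $\delta_r,\delta_s$: differentiating under the integral sign gives $S_{\delta_{r}h}(r,s)=\delta_{r}S_h(r,s)$ and likewise for $\delta_s$, so
\[
S_{((D^-_F-D^+_F)^l f_\times)\circ T_F}(r,s)=(\delta_s-\delta_r)^l S_{f\circ T_F}(r,s)=(\partial_--\partial_+)^l\mathscr{S}_{f\circ T_F}(r^2,s^2)
\]
by \eqref{eq:derivrel}; by Whitney this is smooth on $\R^2$ and compactly supported. Applying \eqref{eq:inttransf} to $W_{F,k}(D^-_F-D^+_F)^l f_\times$, the pullback $W_{F,k}\circ T_F=4\Lambda_F(r^2+s^2)^k r^{2-n_F^+}s^{2-n_F^-}$ combines with the factor $r^{n_F^+-1}s^{n_F^--1}/(r^2+s^2)$ coming from \eqref{eq:etapolarcoord} to annihilate every power of $r$ and $s$. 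Parameterizing the quadric $\{r^2-s^2=2\zetaF,\;r,s>0\}$ by $t=r^2+s^2\in(|2\zetaF|,\infty)$ and using $r\,\d r=s\,\d s$ on it to deduce $r\,\d s+s\,\d r=(t/(4rs))\,\d t$, all surviving factors collapse into $t^k\,\d t$, and the claimed identity follows upon substituting $r^2=(t+2\zetaF)/2$, $s^2=(t-2\zetaF)/2$.

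Finally, the one-dimensional integral on the right is absolutely convergent because $\mathscr{S}_{f\circ T_F}$ is smooth with compact support, so running the same manipulation with absolute values and invoking Tonelli gives $W_{F,k}(D^-_F-D^+_F)^l f_\times\in L^1(\Sigma_\times^{\zetaF},\d\Sigma^{\zetaF})$. The main conceptual point is that the a priori nonintegrable singularities of $(D^\pm_F)^l f_\times$ near $\{u^\pm=0\}$, generated by the factors $1/\|T_F^{-1}w^\pm\|^2$ in \eqref{eq:Dpm}, become tame only after spherical averaging, thanks to the Whitney smoothness that allows replacing $S_{f\circ T_F}$ by $\mathscr{S}_{f\circ T_F}(r^2,s^2)$; once this is recognized, the proof reduces to the coordinate computation sketched above.
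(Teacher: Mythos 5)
Your proof is correct and follows essentially the same route as the paper: both reduce everything to the key identity expressing $S_{W_{F,k}(D^-_F-D^+_F)^l f_\times\circ T_F}(r,s)$ in terms of $(\delta_s-\delta_r)^l S_{f\circ T_F}(r,s)$ (the paper's equation for $S_{f_{k,l}\circ T_F}$, which it justifies in one line while you spell out the chain-rule computation showing $(D^\pm_F\tilde f)\circ T_F=\delta_{r/s}(\tilde f\circ T_F)$), then invoke \eqref{eq:inttransf}, \eqref{eq:derivrel}, and a substitution. The only cosmetic difference is that you parameterize the quadric directly by $t=r^2+s^2$, handling both signs of $\zeta_F$ at once, whereas the paper fixes the sign and substitutes in two steps ($t=s^2$ followed by an affine change).
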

\begin{proof}
Let us first assume $\zetaF\geq 0$ and introduce the short-hand notation $f_{k,l}:=W_{F,k} (D^-_F-D^+_F)^l f_{\times}$  with the function $W_{F,k}$ from \eqref{eq:WFk} and the operators $D^\pm_F$ from \eqref{eq:Dpm}. 
The spherical mean $S_{f_{k,l}\circ T_F}(r,s)$ is well-defined for $r,s>0$ and can be expressed in terms of $S_{f\circ T_F}(r,s)$ according to 
\bq S_{f_{k,l}\circ T_F}(r,s)=4\Lambda_F\frac{(r^2+s^2)^{k}}{r^{n^+_F-2}\,s^{n^-_F-2}}\Big(\frac{1}{2s}\frac{\partial}{\partial s}-\frac{1}{2r}\frac{\partial}{\partial r}\Big)^l S_{f\circ T_F}(r,s),\label{eq:Sftildef}
\eq
as can be seen by writing $w$ and $w^\pm$ in terms of polar coordinates in the definitions of $W_{F,k}$ and $D^\pm_F$. 
Using on $\Sigma^{\zetaF}_\times$ the relations $r=\sqrt{s^2+2\zetaF}$ and $rdr=sds$, we write with \eqref{eq:inttransf}
\begin{align*}
&\int_{\Sigma^{\zetaF}_\times}  f_{k,l}\ d\Sigma^{\zetaF}\\
&=\frac{1}{\Lambda_F}\int_{0}^{\infty} \frac{ (s^2+2\zetaF)^{n^+_F/2}  s^{n_F^--1} S_{f_{k,l}\circ T_F} \big(\sqrt{s^2+2\zetaF},s\big)\d s +   (s^2+2\zetaF)^{n^+_F/2-1}  s^{n_F^-+1}S_{f_{k,l}\circ T_F} \big(\sqrt{s^2+2\zetaF},s\big) \d s}{2s^2+2\zetaF}\end{align*}
and then compute with \eqref{eq:Sftildef}
\begin{align*}
\int_{\Sigma^{\zetaF}_\times}  f_{k,l}\ d\Sigma^{\zetaF}&=\frac{1}{\Lambda_F}\int_{0}^{\infty} (s^2+2\zetaF)^{n^+_F/2-1}  s^{n_F^--1}S_{f_{k,l}\circ T_F} \big(\sqrt{s^2+2\zetaF},s\big)\d s\\
&=4\int_{0}^{\infty}s(2s^2+2\zetaF)^{k} \Big(\frac{1}{2s}\partial_s-\frac{1}{2\sqrt{s^2+2\zetaF}}\partial_r\Big)^l S_{f\circ T_F} \big(\sqrt{s^2+2\zetaF},s\big)\d s\\
&=2\int_{0}^{\infty}(2t+2\zetaF)^{k} \Big(\frac{1}{2\sqrt{t}}\partial_s-\frac{1}{2\sqrt{t+2\zetaF}}\partial_r\Big)^lS_{f\circ T_F} \big(\sqrt{t+2\zetaF},\sqrt{t}\big)\d t\\
&=\int_{2\zetaF}^{\infty}t^{k} \Bigg(\frac{1}{2\sqrt{\frac{t-2\zetaF}{2}}}\partial_s-\frac{1}{2\sqrt{\frac{t+2\zetaF}{2}}}\partial_r\Bigg)^lS_{f\circ T_F} \Bigg(\sqrt{\frac{t+2\zetaF}{2}},\sqrt{\frac{t-2\zetaF}{2}}\Bigg)\d t\\
&=\int_{2\zetaF}^{\infty}t^{k} (\partial_--\partial_+)^l\mathscr{S}_{f\circ T_F}\bigg(\frac{t+2\zetaF}{2},\frac{t-2\zetaF}{2}\bigg)\d t.
\end{align*}
Here we substituted $t:=s^2$ and applied \eqref{eq:derivrel} in the final step. The integrability claim follows since the right hand side is a finite integral. 
The case $\zetaF\leq 0$ is treated analogously by writing $s=\sqrt{r^2-2\zetaF}$.
\end{proof}
A further important observation is the following
\begin{prop}\label{prop:diffopint0}For all $k\in \N_0$ and $f\in \CT(\R^{\mathrm{codim} \, F})$, one has 
\bqn
\partial^k_\pm\mathscr{S}_{f\circ T_F}(0,0)=(2n^\pm_F)^{-k}(\mathrm{vol}\,S^{n^+_F-1})(\mathrm{vol}\,S^{n^-_F-1})(\Delta^{Q_F}_\pm)^k f (0),
\eqn
where $\mathrm{vol}(S^{n^\pm_F-1})$ is the volume of  $S^{n^\pm_F-1}$ with respect to the standard round measure and the second order differential operator $\Delta^{Q_F}_\pm:\Cinft(\R^{\mathrm{codim} \, F})\to \Cinft(\R^{\mathrm{codim} \, F})$ is defined by
\[
\Delta^{Q_F}_\pm(f)(w):=\mathrm{tr}\,(Q_F^\pm)^{-1}\mathrm{Hess}_\pm(f)(w),\qquad w\in \R^{\mathrm{codim} \, F}.
\]
Here $\mathrm{tr}$ denotes the trace, $(Q_F^\pm)^{-1}$ is the restriction of $Q_F^{-1}$  to the subspace $\R^{n_F^\pm}\subset\R^{\mathrm{codim} \, F}$ on which $\pm Q_F$ is positive, and $\mathrm{Hess}_\pm(f)(w)$ denotes the quadrant of the Euclidean Hessian matrix of $f$ at $w$  formed by all second derivatives with respect to the variables in $\R^{n_F^\pm}$.
\end{prop}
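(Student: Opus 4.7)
The plan is to compute $\partial_\pm^k\mathscr{S}_{f\circ T_F}(0,0)$ by a direct reduction to the classical identity relating iterated Laplacians to derivatives of Euclidean spherical means, and then to identify the resulting Laplacian on $\R^{n_F^\pm}$ with $\Delta^{Q_F}_\pm$ at the origin via the chain rule for $T_F$. Treat the ``$+$'' case; the ``$-$'' case is entirely symmetric after swapping the roles of the factors.

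First, using the defining relation $S_F(r,s)=\mathscr S_F(r^2,s^2)$ underlying (4.1), I would observe that the function $G(r):=S_{f\circ T_F}(r,0)=\mathrm{vol}(S^{n_F^--1})\,M(r)$, where
\[
M(r):=\int_{S^{n_F^+-1}}(f\circ T_F)(r\theta^+,0)\,\d \theta^+,
\]
is a smooth even function of $r$ and equals $\tilde G(r^2)$ with $\tilde G(t)=\mathscr{S}_{f\circ T_F}(t,0)$. Matching Taylor coefficients of $G$ at $r=0$ against those of $\tilde G$ at $t=0$ gives
\[
\partial_+^k\mathscr{S}_{f\circ T_F}(0,0)\;=\;\tilde G^{(k)}(0)\;=\;\frac{k!}{(2k)!}\,G^{(2k)}(0)\;=\;\mathrm{vol}(S^{n_F^--1})\,\frac{k!}{(2k)!}\,M^{(2k)}(0).
\]
(This is also the $t=u=0$ limit of the relation $\partial_+^k\mathscr{S}_{f\circ T_F}(t,u)=\delta_r^k S_{f\circ T_F}(\sqrt t,\sqrt u)$ stated in (4.2), which extends continuously to the origin.)

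Next, I would apply the classical spherical-mean expansion: for any smooth compactly supported $h$ on $\R^N$,
\[
\int_{S^{N-1}}h(r\theta)\,\d\theta\;=\;\mathrm{vol}(S^{N-1})\sum_{k\geq 0}\frac{r^{2k}\,\Delta_{\R^N}^k h(0)}{2^k k!\,N(N+2)\cdots(N+2k-2)},
\]
which follows by Taylor expanding $h$ at $0$ and evaluating the even spherical moments $\int_{S^{N-1}}\theta^{2\beta}\d\theta$ via Wick contraction (or the beta--Gamma formula) and collecting the resulting sums as $\Delta^k_{\R^N} h(0)$. Applied to $h(w^+)=(f\circ T_F)(w^+,0)$ with $N=n_F^+$, this yields an explicit formula for $M^{(2k)}(0)$ in terms of $\Delta_{\R^{n_F^+}}^k(f\circ T_F)(0)$.

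Finally, I would identify $\Delta_{\R^{n_F^+}}(f\circ T_F)(0)$ with $\Delta^{Q_F}_+ f(0)$: in an orthogonal basis diagonalising $Q_F$, the chain rule for $T_F$ (see \eqref{eq:TF}) gives $\partial_i^2(f\circ T_F)(w)=|\lambda^F_{i'}|^{-1}(\partial_i^2 f)(T_Fw)$, and on indices in $\R^{n_F^+}$ one has $|\lambda^F_{i'}|=\lambda^F_{i'}$, so summing over those indices reproduces $\mathrm{tr}\bigl((Q_F^+)^{-1}\mathrm{Hess}_+(f)\bigr)(0)$. Iterating $k$ times, and using that $\Delta^{Q_F}_+$ only involves derivatives in the $\R^{n_F^+}$ directions (and hence commutes with itself and with restriction to $\R^{n_F^+}\times\{0\}$ at the origin), one obtains $\Delta_{\R^{n_F^+}}^k(f\circ T_F)(0)=(\Delta^{Q_F}_+)^k f(0)$. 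Combining the three steps produces the claimed identity, with the combinatorial constant collapsing to the stated value after rewriting the product $2^k k!\,n_F^+(n_F^++2)\cdots(n_F^++2k-2)$.

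The main obstacle is purely bookkeeping: tracking the constants across the Whitney change of variables $r\leftrightarrow r^2$, the moment calculation on $S^{n_F^+-1}$, and the Jacobian dilations in $T_F$, while verifying that the iterated operator $(\Delta^{Q_F}_+)^k$ really only sees derivatives in the $\R^{n_F^+}$ directions, so that the identification with $\Delta_{\R^{n_F^+}}^k(f\circ T_F)(0)$ persists for all $k\geq 0$ and not just $k=1$.
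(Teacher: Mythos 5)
Your proposal takes a genuinely different route from the paper's. You compute $\partial_\pm^k\mathscr{S}_{f\circ T_F}(0,0)$ directly from the classical Pizzetti expansion of spherical means, whereas the paper only does a direct computation for $k=1$ and then bootstraps: assuming the claim for $k$, it writes $\partial_\pm^{k+1}\mathscr{S}_{f\circ T_F}(0,0)=\partial_\pm\mathscr{S}_{(2n^\pm_F)^{-k}(\Delta^{Q_F}_\pm)^k(f\circ T_F)}(0,0)$, which implicitly promotes the value identity at the origin to a functional identity of $\partial_\pm^k\mathscr{S}_g$ and $\mathscr{S}_{c_k(\Delta^{Q_F}_\pm)^k g}$ near the origin. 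Your Pizzetti approach bypasses that step and is, in principle, cleaner.

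But your last step --- ``the combinatorial constant collapsing to the stated value'' --- is the gap. Carrying your own reduction through, steps 1--3 give
\[
\partial_+^k\mathscr{S}_{f\circ T_F}(0,0)
=\mathrm{vol}(S^{n^+_F-1})\,\mathrm{vol}(S^{n^-_F-1})\,
\frac{(\Delta^{Q_F}_+)^k f(0)}{2^k\, n^+_F(n^+_F+2)\cdots(n^+_F+2k-2)},
\]
and the denominator $2^k\, n^+_F(n^+_F+2)\cdots(n^+_F+2k-2)$ does \emph{not} simplify to $(2n^+_F)^k$ once $k\ge 2$; it differs by the factor $\prod_{j=1}^{k-1}(n^+_F+2j)/n^+_F$. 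A direct check confirms that the Pizzetti constant, not $(2n^+_F)^{-k}$, is the correct one: take $n^+_F=n^-_F=2$, $T_F=\mathrm{id}$, $f(w)=|w^+|^4$; then $\mathscr{S}_f(t,u)=(2\pi)^2t^2$, so $\partial_+^2\mathscr{S}_f(0,0)=8\pi^2$, while $(\Delta^{Q_F}_+)^2 f(0)=64$ and $(2n^+_F)^{-2}(2\pi)^2\cdot 64=16\pi^2\neq 8\pi^2$, whereas $\tfrac{1}{2^2\cdot 2\cdot 4}(2\pi)^2\cdot 64=8\pi^2$. So, carried through honestly, your method does not establish the displayed identity for $k\ge 2$; rather it exposes that the stated constant (and the paper's bootstrap, which rests on the pointwise identity $\partial_\pm^k\mathscr{S}_g=\mathscr{S}_{c_k(\Delta^{Q_F}_\pm)^kg}$ that fails away from the origin) should be replaced by the Pizzetti constant $\bigl(2^k n^\pm_F(n^\pm_F+2)\cdots(n^\pm_F+2k-2)\bigr)^{-1}$. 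Since only $k\in\{0,1\}$ feed into the explicit constants \eqref{eq:constantsDF} downstream, the explicit leading-order formulas are unaffected, but your proof cannot simply assert that the combinatorial factor collapses to the stated value.
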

\begin{proof} For $k=0$, the claim is true by  \eqref{eq:doublesphermean}. Assuming that it  holds for some $k\in \N_0$ and also for $k=0$, we have
\[
\partial^{k+1}_\pm\mathscr{S}_{f\circ T_F}(0,0)=\partial_\pm\partial^k_\pm\mathscr{S}_{f\circ T_F}(0,0)=\partial_\pm\mathscr{S}_{(2\,n^\pm_F)^{-k}(\Delta^{Q_F}_\pm)^k (f\circ T_F)}(0,0).
\]
This reduces the proof to the case $k=1$. To treat this case, we first recall that for any bilinear form $B$ on any Euclidean space $\R^n$ one has 
\bq
\int_{S^{n-1}}\eklm{B\theta,\theta}\d\theta=\frac{1}{n}\mathrm{vol}\,S^{n-1}\,\mathrm{tr}\,B,\label{eq:bilineartrace}
\eq
as one verifies by diagonalizing $B$. For arbitrary $f\in \CT(\R^{\mathrm{codim} \, F})$, we now  compute using \eqref{eq:derivrel} 
\begin{align*}
&\partial_+\mathscr{S}_{f\circ T_F}(0,0)=\lim_{\eps \to 0^+}\partial_+\mathscr{S}_{f\circ T_F}(\eps^2,0)=\lim_{\eps \to 0^+}\frac{1}{2\eps}\frac{\partial S_{f\circ T_F}}{\partial r}(\eps,0)\\
&=\lim_{\eps \to 0^+}\frac{1}{2\eps}\int_{S^{n_F^+-1}} \int_{S^{n_F^--1}}   \eklm{\nabla_+ f(\eps T_F\theta^+, 0),T_F\theta^+} \d\theta^+ \d \theta^- \\
&=\lim_{\eps \to 0^+}\frac{\mathrm{vol}\,S^{n^-_F-1}}{2\eps}\bigg(\int_{S^{n_F^+-1}}\eklm{\nabla_+ f(0,0), T_F\theta^+}\d \theta^++\eps\int_{S^{n_F^+-1}}\eklm{\mathrm{Hess}_+ f(0,0) T_F\theta^+, T_F\theta^+}\d \theta^++\O(\eps^2)\bigg)\\
&=\frac{\mathrm{vol}\,S^{n^-_F-1}}{2}\int_{S^{n_F^+-1}}\eklm{T_F\mathrm{Hess}_+ f(0,0) T_F\theta^+, \theta^+}\d \theta^+ 
\end{align*}
because $\int_{S^{n_F^+-1}}\eklm{v,\theta^+}\d \theta^+=0$ for each $v\in \R^{n_F^+}$ and $T_F$ is self-adjoint. Applying \eqref{eq:bilineartrace} then yields 
\bqn
\partial_+\mathscr{S}_{f\circ T_F}(0,0)=(2n^+_F)^{-1}(\mathrm{vol}\,S^{n^+_F-1})(\mathrm{vol}\,S^{n^-_F-1})\,\mathrm{tr}(T_F^\pm\mathrm{Hess}_+ f(0,0) T_F^\pm),
\eqn
where we put $T_F^\pm:=T_F|_{\R^{n_F^\pm}}:\R^{n_F^\pm}\to \R^{n_F^\pm}$. 
To get the claimed relation, it now suffices to use the cyclic property of the trace and to observe that $(T_F^\pm)^2=(Q_F^\pm)^{-1}$.  The calculation for $\partial_-\mathscr{S}_f(0,0)$ is completely analogous.
\end{proof}

To close this section, we briefly turn to the case of a definite quadratic form $Q_F$ and write
\bq
\label{eq:doublesphermeandef}
 S_f(r):=\int_{S^{\mathrm{codim}\,F -1}}  f (r  \theta) \d\theta=:\mathscr{S}_f(r^2),\qquad r\in \R.
\eq
\begin{prop}\label{prop:extD2}One has for all $k\in \N_0$ and $f\in \CT(\R^{\mathrm{codim} \, F})$
\[
\mathscr{S}^{(k)}_{f\circ T_F}(0)=\frac{\mathrm{vol}(S^{\mathrm{codim} \, F-1})}{(2\,\mathrm{codim}\, F)^k}  (\Delta^{Q_F})^k f (0),
\]
where $\mathrm{vol}(S^{\mathrm{codim} \, F-1})$ is the volume of  $S^{\mathrm{codim} \, F-1}$ with respect to the standard round measure and the second order differential operator $\Delta^{Q_F}:\Cinft(\R^{\mathrm{codim} \, F})\to \Cinft(\R^{\mathrm{codim} \, F})$ is defined by
\[
\Delta^{Q_F}(f)(w):=\mathrm{tr}\,Q_F^{-1}\mathrm{Hess}(f)(w),\qquad w\in \R^{\mathrm{codim} \, F}.
\]
Here $\mathrm{tr}$ denotes the trace and $\mathrm{Hess}(f)(w)$ denotes the Euclidean Hessian of $f$ at the point $w$.
\end{prop}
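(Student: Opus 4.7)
The proof will proceed by induction on $k$, in direct analogy with the proof of Proposition~\ref{prop:diffopint0}, but is technically simpler because only a single spherical integration is involved. For the base case $k=0$ the identity is immediate from the definition \eqref{eq:doublesphermeandef}, giving $\mathscr{S}_{f\circ T_F}(0)=\mathrm{vol}(S^{\mathrm{codim}\,F-1})f(0)$. Assuming the claim for some $k\in\N_0$, the inductive step will be based on the identification
\[
\mathscr{S}^{(k+1)}_{f\circ T_F}(0) = \partial_t\bigl[\partial^k_t\mathscr{S}_{f\circ T_F}\bigr](0) = \partial_t\mathscr{S}_{(2\,\mathrm{codim}\,F)^{-k}(\Delta^{Q_F})^k(f\circ T_F)}(0),
\]
exactly as in Proposition~\ref{prop:diffopint0}. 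This reduces the task to the case $k=1$.

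For the case $k=1$ I would compute directly. Writing $\mathscr{S}_{f\circ T_F}(r^2)=\int_{S^{\mathrm{codim}\,F-1}} f(r\,T_F\theta)\,d\theta$ and Taylor expanding $f$ at the origin to second order, the linear contribution vanishes by the antipodal symmetry $\theta\mapsto-\theta$, and the quadratic contribution gives, using that $T_F$ is self-adjoint,
\[
\mathscr{S}'_{f\circ T_F}(0)=\tfrac{1}{2}\int_{S^{\mathrm{codim}\,F-1}}\langle T_F\,\mathrm{Hess}(f)(0)\,T_F\theta,\theta\rangle\,d\theta.
\]
Applying the bilinear-form identity \eqref{eq:bilineartrace} to $B=T_F\,\mathrm{Hess}(f)(0)\,T_F$ turns this into $\frac{\mathrm{vol}(S^{\mathrm{codim}\,F-1})}{2\,\mathrm{codim}\,F}\mathrm{tr}\bigl(T_F^2\,\mathrm{Hess}(f)(0)\bigr)$. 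The identity $T_F^2=Q_F^{-1}$ is a direct consequence of the diagonal forms of $T_F$ (see \eqref{eq:TF}) and $Q_F$ (see \eqref{eq:22.10.19a}) in the definite case, and substituting it produces exactly $\frac{\mathrm{vol}(S^{\mathrm{codim}\,F-1})}{2\,\mathrm{codim}\,F}\Delta^{Q_F}f(0)$, completing the $k=1$ case.

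The main obstacle, as in the proof of Proposition~\ref{prop:diffopint0}, is to rigorously justify the reduction step: one has to verify that $\partial^k_t\mathscr{S}_{f\circ T_F}$ coincides with a constant multiple of $\mathscr{S}_{(\Delta^{Q_F})^k(f\circ T_F)}$ on an entire neighborhood of $t=0$, rather than just at the single point $t=0$, so that differentiating once more and evaluating at the origin is legitimate. Once this identification is granted by the same mechanism used in Proposition~\ref{prop:diffopint0}, the induction closes.
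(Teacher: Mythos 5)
Your approach mirrors the paper's own proof, which simply cites the argument of Proposition~\ref{prop:diffopint0} (induction, reduction to $k=1$, bilinear-trace identity~\eqref{eq:bilineartrace}). Your $k=1$ step --- expanding $f$ to second order and killing the linear term by antipodal symmetry --- is a valid variant of what the paper does there (it expands $\nabla f$ to first order instead).

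The reduction step you flag is, however, a genuine gap, not a technicality to be ``granted by the same mechanism used in Proposition~\ref{prop:diffopint0}'': Proposition~\ref{prop:diffopint0}'s proof shares exactly the same gap, and there is no hidden mechanism that closes it. The inductive hypothesis is an equality of two \emph{numbers} at the single point $0$; the passage $\partial_t^{k+1}\mathscr{S}_{f\circ T_F}(0)=\partial_t\,\mathscr{S}_{(2\,\mathrm{codim}\,F)^{-k}(\Delta^{Q_F})^k(f\circ T_F)}(0)$ would require the two \emph{functions} $\partial_t^{k}\mathscr{S}_{f\circ T_F}$ and $(2\,\mathrm{codim}\,F)^{-k}\mathscr{S}_{(\Delta^{Q_F})^{k}(f\circ T_F)}$ to agree on a neighbourhood of $0$, not only at $0$. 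That functional identity is false. Take $n:=\mathrm{codim}\,F$, $Q_F=T_F=\mathrm{id}$ (so $\Delta^{Q_F}=\Delta$), and $g(w)=|w|^4$: then $\mathscr{S}_g(t)=\mathrm{vol}(S^{n-1})\,t^2$, so $\partial_t\mathscr{S}_g(t)=2\,\mathrm{vol}(S^{n-1})\,t$, while $\Delta g=4(n+2)|w|^2$ yields $\tfrac{1}{2n}\mathscr{S}_{\Delta g}(t)=\tfrac{2(n+2)}{n}\mathrm{vol}(S^{n-1})\,t$; the two agree at $t=0$ but differ for all $t>0$.

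This is not merely a hole in the proof: the stated constant is incorrect for $k\geq2$. For the same $g$, $\mathscr{S}_g''(0)=2\,\mathrm{vol}(S^{n-1})$ whereas $\Delta^2 g(0)=8n(n+2)$, so the asserted formula would give $\tfrac{\mathrm{vol}(S^{n-1})}{(2n)^2}\cdot 8n(n+2)=\tfrac{2(n+2)}{n}\,\mathrm{vol}(S^{n-1})$, off by the factor $(n+2)/n$. Expanding $f$ to order $2k$ at $0$ and using the even moments of the round sphere (rather than trying to differentiate the inductive hypothesis) gives $\mathscr{S}^{(k)}_{f\circ T_F}(0)=\tfrac{\mathrm{vol}(S^{n-1})}{2^k\,n(n+2)\cdots(n+2k-2)}(\Delta^{Q_F})^kf(0)$, which agrees with the proposition only for $k\le1$; the one-step recursion that \emph{does} close an induction is $\mathscr{S}^{(k+1)}_{g}(0)=\tfrac{1}{2(n+2k)}\,\mathscr{S}^{(k)}_{\Delta^{Q_F}g}(0)$, with a $k$-dependent factor that the proposition's constant misses. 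The same remark applies to Proposition~\ref{prop:diffopint0} with $n_F^\pm$ in place of $n$. For the downstream results this is harmless --- only $k=0$ feeds into the explicit constants~\eqref{eq:constantsDF}, and for $k\geq1$ Corollary~\ref{cor:geom1} only needs the existence of \emph{some} constant-coefficient operator of order $2k$, which the corrected formula also provides --- but your write-up should not appeal to a mechanism in Proposition~\ref{prop:diffopint0} that is not actually there.
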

\begin{proof}In view of the relations \bqn
\mathscr{S}^{(k)}_f(t)=\delta_r^k S_f(\sqrt{t})\quad \forall\; t\in (0,\infty),\; k\in \N_0,\qquad \delta_r S_f(r):=\frac{1}{2r} S_f'(r),\quad r>0,
\eqn
the proof is completely analogous to the proof of Proposition \ref{prop:diffopint0}. 
\end{proof}

\subsection{Global geometric interpretation}\label{sec:geomint2}
Let us now carry out the second step of our geometric interpretation by specializing from a general spherical mean $S_f$ as in  \eqref{eq:doublesphermean} and \eqref{eq:doublesphermeandef} to the particular spherical means $S_F$ from \eqref{eq:SFrs} and \eqref{eq:RSa} which involve our amplitude $a$, the local normal form symplectomorphism $\Phi_F$ from Proposition \ref{prop:localnormform}, and the cutoff function $\chi_F$ from our partition of unity. Our goal is to translate the expressions from Propositions \ref{prop:Dpm},  \ref{prop:diffopint0}, and  \ref{prop:extD2}, which live in our local model of $M$ near $F$, into expressions that live on subsets of the symplectic reduction $\M^\zeta$. The technical key ingredient to achieving our goal is the following 

\begin{lem}\label{lem:geometric_interpretation} Fix  $(U_F,\Phi_F)$ as in Proposition \ref{prop:localnormform}, let $f\in \mathrm{C}_c(U_F)$, and define the average
$$
\eklm{f}_T(T\cdot  p):=\int_{T}f(g\cdot p)\d g,\qquad   p \in M, 
$$
where $d g$ is the Haar measure on $T$ fixed by our identification $\t\cong\R$. 
 Let $\zeta\in \R$ and put $\zeta_F=\J(F)-\zeta$.
\begin{enumerate}[leftmargin=*]
\item  Let $d\Sigma^{\zeta_F}$ be the hypersurface measure introduced in \eqref{eq:22.10.19} on the pointed quadric $\Sigma^{\zeta_F}_\bullet$. Then, with the notation as in \eqref{eq:meas2}, one has
  \bqn
    \int_{\M^{\zeta}_\mathrm{top}}^{}  \eklm{f}_T
   \d\M^{\zeta}_\mathrm{top}= \int_{P_F} 
     \int_{\Sigma^{\zeta_F}_\bullet} f (\Phi^{-1}_{F}(\tilde \pi_F(\wp,w))) 
    \d\Sigma^{\zeta_F}(w) \d \wp,
  \eqn
where  $\d\M^{\zeta}_\mathrm{top}={(\omega^\zeta_{\mathrm{top}})^{n-1}}/{(n-1)!}$ denotes the  symplectic volume form on the top stratum of $\M^{\zeta}$.

\item Similarly, we have with $dF ={\omega^{\dim F/2}}/(\dim F/2)!$
 \[
   \int_F   f\;
   dF= \int_{P_F}   f (\Phi^{-1}_{F}(\tilde \pi_F(\wp,0))) \d \wp.
  \]
\end{enumerate}
\end{lem}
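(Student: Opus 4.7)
The plan is to reduce both identities to coarea-type decompositions in the local normal form, with Part~(2) following essentially from the fiber-volume normalization and Part~(1) from a compatibility between the hypersurface measure $d\Sigma^{\zeta_F}$ on the quadric and the Marsden-Weinstein volume on the symplectic quotient.

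Part~(2) follows from the fiber bundle integration formula. The function $\wp\mapsto f(\Phi_F^{-1}([\wp,0]))$ is invariant under the right $K_F$-action on $P_F$ because $K_F$ fixes $0\in\C^{\ell_F^++\ell_F^-}$, and so descends to a function $\tilde f$ on $F\cong P_F/K_F$. Since $\Phi_F$ identifies $F\subset M$ with the zero section of $E_F$ as a symplectic submanifold (Proposition~\ref{prop:localnormform}), we have $\tilde f=f|_F$. The normalization $\int_{\pi_F^{-1}(p)}\eta_F=1$ together with the fiber bundle integration formula then yields $\int_{P_F}f(\Phi_F^{-1}(\tilde\pi_F(\wp,0)))\, d\wp=\int_F f\, dF$.

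For Part~(1), the strategy is to test both sides against an auxiliary function $\phi\in\CT(\R)$ and to compare the resulting coarea decompositions. Starting from the right-hand side and using the defining relation $dw=\Theta_F\wedge dq_F$ from~\eqref{eq:22.10.19} together with $d\Sigma^t=\Theta_F|_{\Sigma^t_\bullet}$, Fubini gives
\[\int_\R\phi(t)\int_{P_F}\int_{\Sigma^t_\bullet} f(\Phi_F^{-1}(\tilde\pi_F(\wp,w)))\, d\Sigma^t(w)\, d\wp\, dt=\int_{P_F\times\C^{\ell_F^++\ell_F^-}}(f\circ\Phi_F^{-1}\circ\tilde\pi_F)(\wp,w)\,\phi(q_F(w))\, dw\, d\wp.\]
The lift formula~\eqref{eq:integrallift} descends this to an integral over $P_F\times_{K_F}\C^{\ell_F^++\ell_F^-}$, and transfer through $\Phi_F$ (using $\supp f\subset U_F$ and $\J\circ\Phi_F^{-1}-\J(F)=q_F$ from~\eqref{eq:30.06.2018}) rewrites it as $\int_M f\cdot\phi(\J-\J(F))\,dM$. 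On the other hand, Lemma~\ref{lem:singval} ensures that $\J$ is a submersion on $M_{(\h_\mathrm{top})}$, so the classical coarea formula gives
\[\int_M f\cdot\phi(\J-\J(F))\,dM=\int_\R\phi(t)\int_{\J^{-1}(t+\J(F))\cap M_{(\h_\mathrm{top})}} f\, d\mu^{t+\J(F)}\, dt,\]
with $d\mu^\zeta$ the Liouville hypersurface measure on $\J^{-1}(\zeta)\cap M_{(\h_\mathrm{top})}$ characterized by $dM=d\J\wedge d\mu^\zeta$. The standard Marsden-Weinstein orbit integration formula, obtained via a local trivialization of $\J^{-1}(\zeta)\cap M_{(\h_\mathrm{top})}\to\M^\zeta_\mathrm{top}$ on the free locus together with the defining equation $\pi^\ast\omega^\zeta_\mathrm{top}=i^\ast\omega$ and orbifold conventions on the complement, then identifies $\int_{\J^{-1}(\zeta)\cap M_{(\h_\mathrm{top})}} f\, d\mu^\zeta=\int_{\M^\zeta_\mathrm{top}}\eklm{f}_T\, d\M^\zeta_\mathrm{top}$.

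Combining the three identities above shows that the $t$-integrands on the two sides agree when tested against every $\phi\in\CT(\R)$; since both integrands are continuous in $t$, they agree pointwise, and evaluation at the value of $t$ for which the quadric $\Sigma^t$ matches $\J^{-1}(\zeta)$ (namely $t=\zeta-\J(F)$) completes the proof of Part~(1). The main technical obstacle lies in the Marsden-Weinstein orbit integration identity for general~$\zeta$: the $T$-action on $\J^{-1}(\zeta)\cap M_{(\h_\mathrm{top})}$ need not be free because stabilizers there are finite but possibly non-trivial, producing an orbifold structure on $\M^\zeta_\mathrm{top}$, so one must verify that $d\mu^\zeta$ descends compatibly with the orbifold volume form $d\M^\zeta_\mathrm{top}$; this holds because the free locus is open and dense of full measure in the top stratum, allowing the formula to be verified smoothly and then extended by orbifold conventions.
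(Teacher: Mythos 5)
Your Part (2) argument is essentially identical to the paper's: both use the fiber-volume normalization of $\eta_F$, the $K_F$-invariance at $w=0$, and the identification of $F$ with the zero section.

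For Part (1) you take a genuinely different, weak-form route: instead of establishing the equality of hypersurface measures directly at the fixed level $\zeta$, you integrate both sides against a test function $\phi$ of the level parameter, convert each side via coarea into an integral of $f\cdot\phi(\J-\J(F))$ over $M$, and then "undo" the test by continuity in $t$. The paper works entirely at fixed $\zeta$: it pulls back the Liouville hypersurface density $d\J^{-1}(\{\zeta\})_{\mathrm{top}}$ (defined by $dM=d\J\wedge d\J^{-1}(\{\zeta\})_{\mathrm{top}}$) through $\Phi_F^{-1}\circ\tilde\pi_F$ and matches it with $|d\wp\wedge\Theta_F|$, then separately proves the orbit integration identity $\int_{\J^{-1}(\{\zeta\})_{\mathrm{top}}} f\,d\J^{-1}(\{\zeta\})_{\mathrm{top}}=\int_{\M^\zeta_\mathrm{top}}\eklm{f}_T\,d\M^\zeta_\mathrm{top}$ by an explicit computation with a one-form $\xi_0$ dual to the fundamental vector field $\tilde x_0$, showing $\omega^n/n!=\omega^{n-1}/(n-1)!\wedge\xi_0\wedge d\J$ on $M_{(\h_\mathrm{top})}$ and then $\Psi_p^\ast\xi_0=dg$. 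Your route thus buys conceptual transparency (reduction to the coarea formula and Fubini) at the cost of two auxiliary facts you invoke without full proof: (i) the Marsden--Weinstein orbit integration formula with the specific Liouville normalization of the slice measure, which is exactly what the paper's $\xi_0$-computation establishes and is not a mere formality given the finite-but-nontrivial stabilizers and the Haar normalization fixed by $\t\cong\R$; and (ii) continuity in $t$ of both $t$-integrands in a neighborhood of $t=0$, which is true (the quadric measure is locally finite since $n^\pm_F\ge 2$, and $F$ has codimension $\ge 2$ in the level set, hence measure zero) but should be stated. Neither gap is fatal, but a referee would ask you to fill them; the paper's direct density computation at fixed $\zeta$ makes both points unnecessary.
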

\begin{proof} We begin with Assertion (1). First, note that from \eqref{eq:30.06.2018} and \eqref{eq:22.10.19a} it is clear that
$$
\Phi_F(\J^{-1}(\{\zeta\})_{\mathrm{top}}\cap U_F)= \mklm{[\wp,w] \in \Phi_F(U_F) \mid w \in \Sigma_\bullet^{\zeta_F} \setminus\{0\} }.
$$ 
By slight abuse of notation, let us write $q_F([\wp,w]):=\frac{1}{2}\eklm{Q_Fw,w}=:q_F(w)$, so that we deduce from \eqref{eq:30.06.2018} that 
\bq
(\Phi_F^{-1})^\ast d \J=dq_F.\label{eq:-69437-49-5078347}
\eq Furthermore, recall  from \eqref{eq:integrallift} that we have the equality of smooth densities
\bqn
(\Phi_F^{-1})^\ast(\d M|_{U_F}) =  \d[\wp,w], \qquad  |\tilde \pi_F^*(d[\wp,w]) \wedge \Pi_{F}^\ast\eta_F|=|d \wp \wedge \d w|
\eqn  
and that by \eqref{eq:22.10.19} we have $d\wp \wedge dw=d\wp \wedge \Theta_{F}\wedge\d q_F $.  On the other hand, the hypersurface Liouville measure $d\J^{-1}(\{\zeta\})_{\textrm{top}}$ on $\J^{-1}(\{\zeta\})_{\textrm{top}}$ is characterized by the condition 
\bq
\d M|_p= \frac{1}{n!}\omega^n|_p= d \J|_p  \wedge d\J^{-1}(\{\zeta\})_{\textrm{top}}|_p 
\,  \in \Lambda^{2n}(T^\ast_p M)\label{eq:390239583209}
\eq
for any  $p\in \J^{-1}(\{\zeta\})_{\textrm{top}}$. Pulling back \eqref{eq:390239583209} along $\Phi_F^{-1}\circ \tilde \pi_F$ and taking \eqref{eq:-69437-49-5078347} into account yields 
\[
|(\Phi_F^{-1}\circ \tilde \pi_F)^\ast(d\J^{-1}(\{\zeta\})_{\textrm{top}})\wedge \Pi_{F}^\ast\eta_F| = |\d \wp  \wedge \Theta_F|.
\]
Since $\Theta_F|_{\Sigma_\bullet^{\zeta_F}}=d\Sigma_\bullet^{\zeta_F}$  and $\int_{\pi_F^{-1}(\{p\})}\eta_F=1$ for each $p\in F$, this proves that one has for each $f\in C_c(U_F)$
\bq
    \int_{\J^{-1}(\{\zeta\})_{\textrm{top}}}^{}   f\;
   \d\J^{-1}(\{\zeta\})_{\textrm{top}}= \int_{P_F} 
     \int_{\Sigma_\bullet^{\zeta_F}} f (\Phi^{-1}_{F}(\tilde \pi_F(\wp,w)))
    \d\Sigma_\bullet^{\zeta_F}(w) \d \wp. \label{eq:3930-6930-326326326}
  \eq
 It remains to show that   
  \bq
  \int_{\M^{\zeta}_\mathrm{top}}^{}  \eklm{f}_T
   \d\M^{\zeta}_\mathrm{top}=\int_{\J^{-1}(\{\zeta\})_{\textrm{top}}}^{}   f\;
   \d\J^{-1}(\{\zeta\})_{\textrm{top}}.\label{eq:9326802396803297}
\eq
To this end, recall that $\omega^\zeta_{\textrm{top}}$ is characterized by $\pi^\ast\omega^\zeta_{\textrm{top}} = i_\zeta^\ast\omega$, where $i_\zeta : \J^{-1}(\{\zeta\})_{\textrm{top}} \to
  M$ is the inclusion and $\pi: \J^{-1}(\{\zeta\})_{\textrm{top}} \to\M^{\zeta}_\mathrm{top}$ the orbit projection. Also notice  that our identification of $t$ with $\R$ corresponds to a choice of an element $x_0\in \t$ that is identified with $1$, and leads to an identification of $\J$ with $J(x_0)$. On the top stratum $ M_{(\h_\mathrm{top})}\subset M$, the fundamental vector field $\tilde x_0$ is nowhere-vanishing, so it has a dual one-form $\xi_{0}$. One then computes on $M_{(\h_\mathrm{top})}$
  \bqn
\iota_{\tilde x_0}(\xi_{0}\wedge d \J)=d \J=d J(x_0)=\iota_{\tilde x_0}\omega,
\eqn
where the first equality uses the $T$-invariance of $\J$, the middle equality is the remark above, and the last equality is the defining property  of the momentum map $\J$. Consequently, 
\bq
(\xi_{0}\wedge d \J)|_{M_{(\h_\mathrm{top})}} + \beta=\omega|_{M_{(\h_\mathrm{top})}}
\label{eq:21851290580}
  \eq
  for some $\beta\in \Omega^2(M_{(\h_\mathrm{top})})$ that fulfills $\iota_{\tilde x_0}\beta=0$.  Thus, on $M_{(\h_\mathrm{top})}$ we have
   \begin{align*}
 \frac{1}{n!}\omega^n&=\frac{1}{n!}\omega^{n-1}\wedge \xi_{0}\wedge d \J + \frac{1}{n!}\omega^{n-1}\wedge \beta\\
 &=\frac{1}{n!}\omega^{n-1}\wedge \xi_{0}\wedge d \J + \frac{1}{n!}\omega^{n-2}\wedge \xi_{0}\wedge d \J \wedge \beta + \frac{1}{n!}\omega^{n-2}\wedge \beta^2\\
 &=\frac{1}{n!}\bigg(\sum_{j=1}^{n}\omega^{n-j}\wedge\beta^{j-1}\bigg)\wedge \xi_{0}\wedge d \J + \frac{1}{n!}\beta^n
 \end{align*}
 but $\beta^n=0$ because $\beta$ is degenerate.   Now, if we insert $\beta=\omega-\xi_{0}\wedge d \J$, then all non-zero powers of $\xi_{0}\wedge d \J$ get killed by the  wedge product with $\xi_{0}\wedge d \J$, and we arrive at
    \bqn
 \frac{1}{n!}\omega^n=\frac{1}{n!}\bigg(\sum_{j=1}^{n}\omega^{n-1}\bigg)\wedge \xi_{0}\wedge d \J=\frac{1}{(n-1)!}\omega^{n-1}\wedge \xi_{0}\wedge d \J.
 \eqn 
Inserting this in \eqref{eq:390239583209} gives us
  \[
  d\J^{-1}(\{\zeta\})_{\textrm{top}} = \frac{1}{(n-1)!}i_\zeta^\ast\omega^{n-1}\wedge i_\zeta^\ast(\xi_{0}).
  \]
On the other hand, we compute
  \[
\pi^\ast \d\M^{\zeta}_\mathrm{top}=\pi^\ast\big({(\omega^\zeta_{\mathrm{top}})^{n-1}}/{(n-1)!}\big)=\frac{1}{(n-1)!}\pi^\ast(\omega^\zeta_{\mathrm{top}})^{n-1}=\frac{1}{(n-1)!}i_\zeta^\ast\omega^{n-1},
\]
so that we find
\begin{align*}
d\J^{-1}(\{\zeta\})_{\textrm{top}}&=\pi^\ast(\d\M^{\zeta}_\mathrm{top})\wedge i_\zeta^\ast(\xi_{0}),\\
 \int_{\J^{-1}(\{\zeta\})_{\textrm{top}}}^{}   f\;
   \d\J^{-1}(\{\zeta\})_{\textrm{top}}&= \int_{\M^0_{\mathrm{top}}}\left(\int_{T\cdot p}f \xi_{0}\right)\d\M^{\zeta}_\mathrm{top}(T\cdot p).
  \end{align*}
We are left with comparing $\int_{T\cdot p}f \xi_{0}$ with $\eklm{f}_T(T\cdot p)$ for any orbit $T\cdot p$, where $p\in \J^{-1}(\{\zeta\})_{\textrm{top}}$. The map $
\Psi_p:S^1\owns g\mapsto g\cdot p \in T\cdot p$ 
is an $S^1$-equivariant diffeomorphism, so $\Psi_p^\ast \xi_{0}$ is a Haar measure on $S^1$ and thus a constant multiple of $dg$. To determine the constant, we note that the derivative of $\Psi_p$  at the identity fulfills
 $D\Psi_p|_1(x)=\tilde x|_p$,  $x\in \t=T_1S^1$. Consequently, 
 \[
 \Psi_p^\ast \xi_{0}|_1(x_0)=\xi_{0}(D\Psi_p|_1(x_0))=\xi_{0}(\tilde x_0)=1,
 \]
 proving $\Psi_p^\ast \xi_{0}=dg$. This finishes the proof of \eqref{eq:9326802396803297} and Assertion (1).  Assertion (2) follows along the same arguments taking into account \eqref{eq:UFMT}, \eqref{eq:meas1}, and the relation $
 \Phi_F(F)= \mklm{[\wp,w] \in \Phi_F(U_F) \mid w =0 }$.
 \end{proof}

\begin{cor}\label{cor:geom1}  Consider a spherical mean $S_F$ as in \eqref{eq:SFrs} and \eqref{eq:RSa}. Depending on whether the bilinear form $Q_F$ is indefinite or definite, write
\[
S_F(r,s)=:\mathscr S_{\tilde a_F}(r^2,s^2)\qquad \text{or}\qquad S_F(r)=:\mathscr S_{\tilde a_F}(r^2),\qquad r,s\in [0,\infty),
\]
where $\mathscr S_{\tilde a_F}$ is a smooth, compactly supported function on $\R^2$ or $\R$ as in \eqref{eq:doublesphermean} or \eqref{eq:doublesphermeandef}, respectively,  associated with the function $\tilde a_F$ from \eqref{eq:SFrs} which is related to our amplitude $a\in \CT(M)$ by
\[
 \tilde a_F(w) =\int_{P_F} (a\chi_F)(\Phi^{-1}_{F}(\tilde \pi_F(\wp,T_Fw)))  \d \wp,\qquad w\in \R^{\mathrm{codim} \, F}.
\]
\begin{enumerate}[leftmargin=*]
\item Suppose that $Q_F$ is indefinite. Then, for each $k,l\in \N_0$, there are differential operators \bqn
\mathscr{D}_{F,k,l}:\Cinft(U_F\setminus F)\to \Cinft(U_F\setminus F),\qquad 
\mathscr{Q}^\pm_{F,k}: \Cinft(U_F)\to \Cinft(U_F)
\eqn
 of orders $k$ and $2k$, respectively, such that
\begin{align}\begin{split}
\int_{0}^{\infty}t^{l} (\partial_--\partial_+)^k\mathscr{S}_{\tilde a_F}\Big(\frac{t}{2},\frac{t}{2}\Big)\d t&= \int_{\M^{\J(F)}_\mathrm{top}}  \eklm{\mathscr{D}_{F,k,l}(\chi_F a)}_T
   \d\M^{\J(F)}_\mathrm{top},\\
\partial^k_\pm\mathscr{S}_{\tilde a_F}(0,0)&=\int_F \mathscr{Q}^\pm_{F,k} a \d F.\label{eq:int362372}\end{split}
\end{align}
\item Suppose that $Q_F$ is definite. Then, for each $k\in\N_0$, there is a differential operator $\mathscr{Q}_{F,k}: \Cinft(U_F)\to \Cinft(U_F)$ of order $2k$ such that
\bq
 \mathscr{S}^{(k)}_{\tilde a_F}(0)=\int_F \mathscr{Q}_{F,k} a \d F.\label{eq:geom2}
\eq
\end{enumerate}
\end{cor}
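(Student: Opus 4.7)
The plan is to combine the explicit formulas from Propositions \ref{prop:Dpm}, \ref{prop:diffopint0}, and \ref{prop:extD2}, which express local spherical-mean objects as integrals or evaluations on the flat model $\R^{\mathrm{codim}\,F}$, with the global geometric dictionary of Lemma \ref{lem:geometric_interpretation}. The unifying observation is that $\tilde a_F$ is itself a fibre integral over $P_F$ of $(a\chi_F)\circ\Phi_F^{-1}\circ\tilde\pi_F$, and any differential operator acting purely in the $w$-variable commutes with this fibre integration.

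For the first identity in (1), I would set $f(w):=\int_{P_F}(a\chi_F)(\Phi_F^{-1}(\tilde\pi_F(\wp,w)))\,\d \wp$, so that $\tilde a_F=f\circ T_F$ and $\mathscr{S}_{\tilde a_F}=\mathscr{S}_{f\circ T_F}$. Applying Proposition \ref{prop:Dpm} at $\zetaF=0$, where the pointed and slit quadrics coincide by \eqref{eq:pointedslit}, and with the roles of $k$ and $l$ swapped relative to that statement, yields
\[
\int_0^\infty t^l(\partial_--\partial_+)^k\mathscr{S}_{\tilde a_F}\Big(\frac{t}{2},\frac{t}{2}\Big)\,\d t\;=\;\int_{\Sigma_\bullet^0}W_{F,l}(D^-_F-D^+_F)^kf\,\d\Sigma^0.
\]
Since $W_{F,l}$ and $D^\pm_F$ act only in the $w$-variables, they can be pulled inside the defining fibre integral of $f$. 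The remaining step is to recognize the resulting fibrewise operator as the pullback along $\Phi_F^{-1}\circ\tilde\pi_F(\wp,\cdot)$ of a well-defined differential operator $\mathscr{D}_{F,k,l}$ of order $k$ on $U_F\cap M_{(\h_\mathrm{top})}$; Fubini combined with Lemma \ref{lem:geometric_interpretation}(1) applied at $\zeta=\J(F)$ then delivers the claimed identity.

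The second identity in (1) and the identity \eqref{eq:geom2} in (2) follow the same scheme. Proposition \ref{prop:diffopint0} gives $\partial^k_\pm\mathscr{S}_{f\circ T_F}(0,0)=c_{\pm,k}(\Delta^{Q_F}_\pm)^kf(0)$ with a positive constant $c_{\pm,k}$ depending only on $n_F^\pm$ and $k$; Proposition \ref{prop:extD2} similarly produces $\mathscr{S}^{(k)}_{f\circ T_F}(0)=c_k(\Delta^{Q_F})^kf(0)$. Pulling these second-order, constant-coefficient operators inside the $P_F$-integral defining $f$ and evaluating at $w=0$, whose image under $\Phi_F^{-1}\circ\tilde\pi_F(\wp,\cdot)$ lies in $F$, yields together with Lemma \ref{lem:geometric_interpretation}(2) the operators $\mathscr{Q}^\pm_{F,k}$ and $\mathscr{Q}_{F,k}$ of order $2k$, once the numerical prefactors are absorbed.

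The principal obstacle is the well-definedness of the three resulting operators on $U_F\cap M_{(\h_\mathrm{top})}$ and $U_F$ respectively, independently of any local trivialization of $P_F\to F$. This reduces to verifying $K_F$-invariance of $W_{F,l}(D^-_F-D^+_F)^k$, $(\Delta^{Q_F}_\pm)^k$, and $(\Delta^{Q_F})^k$ on $\R^{\mathrm{codim}\,F}$. Since $K_F\subset U(\ell_F^+)\times U(\ell_F^-)$ commutes with $\rho_F(S^1)$, it preserves the joint weight decomposition of $\rho_F$; on each weight space both $T_F$ and $Q_F$ act by a single scalar and hence commute with $K_F$, while $K_F$ acts by unitaries and so preserves the standard Euclidean structure. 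This gives the needed invariance of all three compound operators, and the bundle isomorphism $E_F\cong P_F\times_{K_F}\C^{\ell_F^++\ell_F^-}$ combined with the symplectomorphism $\Phi_F$ then descends them to genuine global operators on the claimed domains.
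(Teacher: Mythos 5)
Your proposal follows essentially the same route as the paper's own proof: write $\tilde a_F=f\circ T_F$ with $f$ the $P_F$-fibre integral, apply Propositions \ref{prop:Dpm}, \ref{prop:diffopint0}, \ref{prop:extD2} at $\zeta_F=0$, pull the $w$-differential operators through the fibre integral, and convert via Lemma \ref{lem:geometric_interpretation}. You correctly identify the swap of $k$ and $l$ between the corollary and Proposition \ref{prop:Dpm}, and you add a useful explicit verification of the $K_F$-invariance of $W_{F,l}(D^-_F-D^+_F)^k$, $(\Delta^{Q_F}_\pm)^k$, $(\Delta^{Q_F})^k$ — namely that $K_F$ preserves the $\rho_F$-weight decomposition and hence commutes with $Q_F$ and $T_F$, which the paper's proof invokes implicitly when descending the extended operators from $P_F\times V$ to $P_F\times_{K_F}V$. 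The one minor blemish: $(\Delta^{Q_F}_\pm)^k$ is indeed constant-coefficient, but the operator $W_{F,l}(D^-_F-D^+_F)^k$ used for the first identity is not (it carries $\|T_F^{-1}w^\pm\|^{-2}$ factors); this does not affect the argument, since those factors are smooth and $K_F$-invariant on the slit locus, but you should not describe the whole family as constant-coefficient.
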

\begin{proof}
Applying Lemma \ref{lem:geometric_interpretation} and Propositions \ref{prop:Dpm},  \ref{prop:diffopint0}, \ref{prop:extD2} yields the claim with integrands of the form
\[
\eklm{\mathscr{D}_{F,k,l}(a\chi_F)}_T,\quad \mathscr{Q}^\pm_{F,k} (a\chi_F), \quad \mathscr{Q}_{F,k} (a\chi_F),
\]
where the operators $\mathscr{D}_{F,k,l}$,  $\mathscr{Q}^\pm_{F,k}$, and $\mathscr{Q}_{F,k}$ are defined as follows: first, we extend the operators $D_F^\pm$, $\Delta^{Q_F}$, $\Delta^{Q_F}_\pm$  and the functions $W_{F,k}$ from $V\subset\R^{\mathrm{codim} \, F}$ (here $V$ is $\R^{\mathrm{codim} \, F}$ or $ \R^{n_F^+}_\bullet\times \R^{n_F^-}_\bullet$)   to the trivial bundle $P_F\times V$ by pulling back along the projection $P_F\times V\to V$. Then these extended operators and functions induce differential operators $\tilde D^\pm_F$, $\tilde \Delta^{Q_F}$, $\tilde \Delta_\pm^{Q_F}$ and functions $\tilde W_{F,k}$ on  $P_F\times_{K_F} V$ by identifying smooth functions on $P_F\times_{K_F} V$ 
with $K_F$-invariant smooth functions on $P_F\times V$. By conjugating $\tilde D^-_F-\tilde D^+_F$  with $\Phi_F$, taking the $l$-th power, and multiplying with $\tilde W_{F,k}$ and appropriate constants as prescribed by Proposition \ref{prop:09.03.2020}, we obtain the operator $\mathscr{D}_{F,k,l}$. Similarly, we define the operators  $\mathscr{Q}_{F,k}$ and $\mathscr{Q}^\pm_{F,k}$ using $\tilde\Delta^{Q_F}$ and $\tilde\Delta_\pm^{Q_F}$. Finally, since $\chi_F\equiv 1$ near $F$, we can remove $\chi_F$ from the integrals over $F$.  
\end{proof}

\section{Proof of the main results}
\label{sec:5}

We are now ready to prove our main results, Theorems \ref{thm:main1} and \ref{thm:main2}.

\subsection{Proof of Theorem \ref{thm:main1}}\label{sec:proof}Fix a given $\zeta\in \R\cong \t^\ast$, and recall from \eqref{eq:sumdistr} the equality
\bq
I^\zeta(\varepsilon)  =I_{\chi_\mathrm{top}}^\zeta (\varepsilon)+  \sum_{F\in \mathcal \F}  I_{\chi_F}^\zeta (\varepsilon) \label{eq:decomprep}
\eq
which is equivalent to the statement \eqref{eq:25.5.2019}. We shall derive the desired asymptotic expansion of $I^\zeta(\varepsilon)$ by deriving an asymptotic expansion of each summand in \eqref{eq:decomprep}. Let us begin with the summands associated with fixed point set components $F\in \F$ satisfying $\J(F)=\zeta$. Fix such an $F$, put $j_F:=\mathrm{codim} \, F/ 2 -1$, and recall the notation \eqref{eq:09.03.2020}  as well as the expression \eqref{eq:27.10.2018}.  
Applying Corollary \ref{cor:geom1} to the statements of Propositions \ref{prop:09.03.2020} and \ref{prop:4.3.def}, taking $\mathscr S=\mathscr{S}_{\tilde a_F}$, $\zeta=0$, $2\eps$ as asymptotic parameter, and $L=\mathrm{codim} \, F/ 2 -2$ (in Proposition \ref{prop:09.03.2020}) or $L=\mathrm{codim} \, F/ 2$ (in Proposition \ref{prop:4.3.def}) there,  we obtain the following results:
\begin{enumerate}[leftmargin=*]\item Suppose that $F$ is indefinite. Then, there are differential operators $D^\mathrm{top}_{j,F}:\Cinft(U_F\setminus F)\to \Cinft(U_F\setminus F)$, $j\in \N_0$, and 
$D^\pm_{j,F}: \Cinft(U_F)\to \Cinft(U_F)$, $j\geq j_F$, 
 of orders $j$ and $2(j+1)-\mathrm{codim} \, F$ respectively, such that one has
\bqn
I_{\chi_F }^{\J(F)} (\eps)\;
\sim\;\eps\sum_{j=0}^{\infty}\eps^j A^{\J(F)}_{j,F} ,   \qquad \text{in}\quad(\D(M)\otimes\S(\t))'
\eqn
with $A^{\J(F)}_{j,F}=A'_{j,F} + A_{j,F}$ given by 
\begin{align*}
A'_{j,F}(a\otimes\sigma)&=\sigma^{(j)}(0)\int_{\M^{\J(F)}_\mathrm{top}} \eklm{D^\mathrm{top}_{j,F}(\chi_F a)}_T\d\M^{\J(F)}_\mathrm{top},\\
A_{j,F}(a\otimes\sigma)&=\begin{cases} 0, & j<j_F, \\ \sigma^{[k]}_{+}(0)\int_F D^+_{j,F} a \d F+\sigma^{[k]}_{-}(0)\int_F D^-_{j,F} a \d F, & j\geq j_F.
\end{cases}
\end{align*}
\item Suppose that $F$ is definite, $s_F=\pm$. Then, there are differential operators 
$D_{j,F}:\Cinft(U_F)\to \Cinft(U_F)$, $j\geq \frac{1}{2}\mathrm{codim} \, F-1$, of orders $2j+2-\mathrm{codim} \, F$ such that one has
\bqn
I_{\chi_F}^{\J(F)} (\eps)\;
\sim\;\eps\sum_{j=0}^{\infty}\eps^j A_{j,F}^{\J(F)}  \qquad \text{in}\quad(\D(M)\otimes\S(\t))',
\eqn
where $A_{j,F}^{\J(F)}$ vanishes unless $j \geq j_F$, in which case one has
$
A_{j,F}^{\J(F)}(a\otimes \sigma)=A_{j,F}(a\otimes \sigma)=\sigma^{[j]}_{\mp}(0)\int_F D_{j,F} a \d F.
$
\end{enumerate}
To prove that the operators $D^\pm_{j,F}$, $D_{j,F}$ equal constants for $j=\frac{1}{2}\mathrm{codim} \, F-1$ and to determine the constants $C_{\mathrm{indef}}$, $C_{\mathrm{def}}\in \C$  such that $D^\pm_{\frac{1}{2}\mathrm{codim} \, F-1,F}=C^\pm_{\mathrm{indef}}$, $D_{\frac{1}{2}\mathrm{codim} \, F-1,F}=C_{\mathrm{def}}$, we inspect the differential operators and the constants occurring in Propositions \ref{prop:09.03.2020}, \ref{prop:4.3.def},  \ref{prop:diffopint0}, and \ref{prop:extD2}, and recall that \eqref{eq:27.10.2018} and \eqref{eq:27.10.2018a} involve an overall factor of $\Lambda_F^{-1}$, which gives us 
\begin{align*}
C^\pm_{\mathrm{indef}}&=\Lambda_F^{-1}(\mathrm{vol}\,S^{n^+_F-1})(\mathrm{vol}\,S^{n^-_F-1})2^{\mathrm{codim}\, F/2}c^\pm_{\mathrm{codim}\, F/2-1,0,0,0}\\
&=\frac{2^{\mathrm{codim}\, F/2+2}\pi^{\mathrm{codim}\, F/2}}{\Lambda_F(n^+_F/2-1)!(n^-_F/2-1)!}c^\pm_{\mathrm{codim}\, F/2-1,0,0,0},\\
C_{\mathrm{def}}&=\Lambda_F^{-1}\mathrm{vol}(S^{\mathrm{codim} \, F-1})2^{\mathrm{codim}\, F/2} c_{\mathrm{codim}\, F/2-1,0}=\frac{2^{\mathrm{codim}\, F/2+1}\pi^{\mathrm{codim}\, F/2}}{\Lambda_F(\mathrm{codim}\, F/2-1)!} c_{\mathrm{codim}\, F/2-1,0}.
\end{align*}
Here the additional factors $2^{\mathrm{codim}\, F/2}$ occur because we apply Propositions \ref{prop:09.03.2020} and \ref{prop:4.3.def} with $\eps$ replaced by $2\eps$. Similarly, one finds $D^\mathrm{top}_{0,F}=2\pi$ by taking into account that the function $W_{F,k}$ occurring in Proposition \ref{prop:Dpm} is constant when $k=\mathrm{codim}\, F/2-2$, its value being given by $W_{F,\mathrm{codim}\, F/2-2}=2^{\mathrm{codim}\, F/2}\Lambda_F$ since $\norm{w}^2=2\norm{w^\pm}^2$ for $w\in T_F^{-1}(\Sigma^0)$. Plugging in the values of the constants $c^\pm_{\mathrm{codim}\, F/2-1,0,0,0}$, and $c_{\mathrm{codim}\, F/2-1,0}$, we arrive at
\bqn
C_{\mathrm{def}}=(2\pi)^2\frac{(2\pi i)^{\mathrm{codim}\, F/2-1}}{\Lambda_F(\mathrm{codim}\, F/2-1)!},\qquad 
C^\pm_{\mathrm{indef}}=(2\pi)^2\frac{(\pi i)^{\mathrm{codim}\, F/2-1}}{\Lambda_F(\mathrm{codim}\, F/2-1)!} N^\pm_F,
\eqn
and we also find that the constants $N^\pm_F$ from \eqref{eq:constantsDF} are given by
\begin{multline}
N^\pm_F=\frac{(-1)^{\mathrm{codim}\, F/2}(\mathrm{codim}\, F/2-1)!}{(n^+_F/2-1)!(n^-_F/2-1)!}\sum_{l=0}^{\mathrm{codim}\, F/2-2}\frac{(\pm 1)^{\mathrm{codim}\, F/2-l-1}}{\mathrm{codim}\, F/2-l-1}\label{eq:Nvalues0}\\
\sum_{\substack{l^++l^-=l\\ 0\leq l^\pm\leq n^\pm_F/2-1}}(-1)^{l^+}{n^+_F/2-1 \choose {l^+}}{n^-_F/2-1 \choose {l^-}}.
\end{multline}
To simplify the formula for $N^\pm_F$, we perform a computation kindly suggested by  Iosif Pinelis \cite{pinelisMO}. One computes for arbitrary $p,q\in \N_0$ with the convention that $\binom ab=0$ whenever $a<b$
\begin{align}
\nonumber	\sum_{l\ge0}\frac{1}{p+q-l+1}\sum_{\substack{j+k=l\\ j\ge0,k\ge0}}(-1)^{j}\binom pj\binom qk  
\nonumber	&=\sum_{l\ge0}\int_0^1 \,x^{p+q-l}\sum_{\substack{j+k=l\\ j\ge0,k\ge0}}(-1)^{j}\binom pj\binom qk \d x \\ 
\nonumber	&=\int_0^1 \,x^{p+q}\sum_{l\ge0}\sum_{\substack{j+k=l\\ j\ge0,k\ge0}}(-1)^{j}\binom pj\binom qk x^{-j} x^{-k} \d x\\
	&=\int_0^1 \,x^{p+q}\sum_{j\ge0}\binom pj(-x^{-1})^j\;\sum_{k\ge0}\binom qk x^{-k} \d x \label{eq:simplif}\\
\nonumber		&=\int_0^1 \,x^{p+q}(1-x^{-1})^p(1+x^{-1})^q \d x=\int_0^1 \,(x-1)^p(1+x)^q \d x,  
\end{align} 
	and the latter expression can be further rewritten into
\begin{align*}	
	\int_0^1 \,(x-1)^p\,\sum_{k=0}^q\binom qk x^k \d x &=(-1)^p\sum_{k=0}^q\binom qk\int_0^1 \,(1-x)^p x^k\d x \\ 
	&=(-1)^p\sum_{k=0}^q\frac{q!}{k!(q-k)!}\frac{k!p!}{(k+p+1)!}=(-1)^p \frac{q!p!}{(q+p+1)!} \sum_{j=0}^q\binom{q+p+1}j. 
	\end{align*} 
Applying this to \eqref{eq:Nvalues0} gives us the result\footnote{The appearance of $n^-_F$ rather than $n^\mp_F$ in the exponent determining the overall sign of $N^\pm_F$ is not a misprint. This subtle and seemingly peculiar asymmetry is a consequence of the fact that interchanging $n_F^+$ and $n^-_F$ corresponds to replacing the momentum map $\J$ by $-\J$, which changes the phase function in the generalized Witten integral \eqref{eq:2}.}
\bq
N^\pm_F=\pm (-1)^{n^-_F/2-1}  \sum_{j=0}^{n^\mp_F/2-1}\binom{\mathrm{codim}\, F/2-1}j .\label{eq:Nvalues}
\eq
In particular, we see that $N^\pm_F$ is a non-zero integer.
Let us now turn to the remaining summands in \eqref{eq:decomprep}:
\begin{enumerate}[leftmargin=*]
\item[(3)]  Assume that $F\in \F$ is such that $\J(F)\neq \zeta$. Then $\zeta$ is a regular value of $\J|_{U_F}$, and by Proposition \ref{prop:asymptreg} there is  an asymptotic expansion
\begin{align*}
I_{\chi_F}^{\zeta} (\eps)\;
&\sim\;\eps\sum_{j=0}^{\infty}\eps^j A^\zeta_{j,F} \qquad \text{in}\quad(\D(M)\otimes\S(\t))'
\end{align*}
with 
$$
A^\zeta_{j,F}(a\otimes \sigma)=\sigma^{(j)}(0)\int_{\mathscr M^\zeta_\mathrm{top}}\big<\mathscr D^\zeta_{j} (\chi_F a)\big>_T\d \mathscr M^\zeta_\mathrm{top},\qquad \mathscr D^\zeta_{0}=2\pi.
$$
Of course, instead of applying Proposition \ref{prop:asymptreg} we could have also treated this case as in (1) or (2) by applying  Corollary \ref{cor:geom1} and Propositions \ref{prop:09.03.2020} or \ref{prop:4.3.def}, respectively. By the uniqueness of the coefficients in the asymptotic expansion, the results of the two approaches agree. However, the form of the coefficients obtained in Proposition \ref{prop:asymptreg} is much more simple.
\item[(4)] The function $\J|_{M_{\h_\mathrm{top}}}$ has only regular values. In particular, $\zeta$ is a regular value  and  Proposition \ref{prop:asymptreg} gives us  an asymptotic expansion
\begin{align*}
I_{\chi_\mathrm{top}}^{\zeta} (\eps)\;
&\sim\;\eps\sum_{j=0}^{\infty}\eps^j A^\zeta_{j,\mathrm{top}} \qquad \text{in}\quad(\D(M)\otimes\S(\t))'
\end{align*}
with 
$$
A^\zeta_{j,\mathrm{top}}(a\otimes \sigma)=\sigma^{(j)}(0)\int_{\mathscr M^\zeta_\mathrm{top}}\big<\mathscr D^\zeta_{j} (\chi_\mathrm{top} a)\big>_T\d \mathscr M^\zeta_\mathrm{top},\qquad \mathscr D^\zeta_{0}=2\pi.
$$
\end{enumerate}
Taking (1)--(4)  together we deduce  with \eqref{eq:decomprep} that 
\bqn
I^\zeta(\varepsilon)  \sim \eps\sum_{j=0}^\infty \eps^j A^\zeta_j\qquad \text{in}\quad (\D(M)\otimes \S(\t))'
\eqn
with
\bqn
A^\zeta_j(a\otimes\sigma)=\sigma^{(j)}(0)\int_{\mathscr M^\zeta_{\mathrm{top}}}\big<D^\zeta_{j} a\big>_T\d \mathscr M^\zeta_{\mathrm{top}}+\sum_{\substack{F\in \F:\J(F)=\zeta,\\F\cap \supp a\neq \emptyset}} A_{j,F}(a\otimes\sigma),
\eqn
where the differential operator $D^\zeta_{j}$ is defined on the neighborhood $\mathscr U_j^\zeta\subset M_{(\h_\mathrm{top})}$ of $\J^{-1}(\{\zeta\})\cap M_{(\h_\mathrm{top})}$ on which the operator $\mathscr D^\zeta_{j} $ from Proposition \ref{prop:asymptreg} is defined and acts on a function $f\in \Cinft(\mathscr U_j^\zeta)$ by
\bq
D^\zeta_{j}(f):=\mathscr D^\zeta_{j} (\chi_\mathrm{top}|_{\mathscr U_j^\zeta} f) + \sum_{F\in \F:\J(F)\neq \zeta} \mathscr D^\zeta_{j}(\chi_F|_{\mathscr U_j^\zeta} f)+ \sum_{\substack{F\in \F:\J(F)=\zeta,\\F\text{ indefinite}}} D^\mathrm{top}_{j,F}(\chi_F f|_{(U_F\setminus F)\cap \mathscr U_j^\zeta}).\label{eq:defDj}
\eq
The sum in \eqref{eq:defDj} is locally finite and $\chi_F f|_{(U_F\setminus F)\cap \mathscr U_j^\zeta}$ extends smoothly by zero to $\mathscr U_j^\zeta$, so that $D^\zeta_{j}$ is  well-defined. To see that 
\bq
\int_{\mathscr M^\zeta_\mathrm{top}}\langle D^\zeta_{0} a\rangle_T\d \mathscr M^\zeta_\mathrm{top}=2\pi\int_{\mathscr M^\zeta_\mathrm{top}}\langle a\rangle_T\d \mathscr M^\zeta_\mathrm{top}\qquad \forall\; a\in \CT(M),\label{eq:zeroorder}
\eq 
we observe that for each definite fixed point $F\in \F$ one has $U_F\cap \J^{-1}(\{\J(F)\})=F$ and consequently the intersection $\J^{-1}(\{\J(F)\})\cap M_{(\h_\mathrm{top})}$ is disjoint from $U_F$. This implies that for each $\zeta\in \t^\ast$ the family of cutoff functions $\{\chi_\mathrm{top},\chi_F\}_{F\in \F:\J(F)=\zeta, F\,\text{indefinite}}$ restricts to a partition of unity on the set $\J^{-1}(\{\zeta\})\cap M_{(\h_\mathrm{top})}$. Since 
$\mathscr D^\zeta_{0}=2\pi$ and $D^\mathrm{top}_{0,F}=2\pi$ for all indefinite $F\in \F$, we get \eqref{eq:zeroorder}.

Finally, to prove the naturality claim, let $(M',\omega',\J')$ be another Hamiltonian $T$-space and $\Phi:M\to M'$ an isomorphism of Hamiltonian $T$-spaces. Then $\F'=\{\Phi(F):F\in \F\}$ is the set of connected components of $M'^T$, and for each $F'=\Phi(F)\in \F'$ the map $\Phi_{F'}:=\Phi_F\circ\Phi^{-1}:U_F':=\Phi(U_F)\to P_F$ serves as the local normal form symplectomorphism in Proposition \ref{prop:localnormform}. Furthermore, the partition of unity $\{\chi_{\mathrm{top}}\circ \Phi^{-1},\chi_{\Phi^{-1}(F')}\circ \Phi^{-1}\}_{F'\in \F'}$ is subordinate to the cover $M'=M'_{(\h_{\mathrm{top})}}\cup \bigcup_{F'\in \F'} U_{F'}$, and one has $n^\pm_{F'}=n^\pm_{\Phi^{-1}(F')}$, $\lambda_j^{F'}=\lambda_j^{\Phi^{-1}(F')}$ for each $F'\in \F'$, $1\leq j \leq \mathrm{codim}\, F'/2=\mathrm{codim}\, \Phi^{-1}(F')/2$. The claim now follows from the construction of the operators in the proofs of Theorem \ref{thm:main1} and Corollary \ref{cor:geom1}, which is carried out locally either using Proposition \ref{prop:asymptreg}, for which the naturality property holds since the phase function on $M'$ is given by composition with $\Phi$ in the manifold variable, or by composing operators in a Euclidean space which are uniquely determined by the numbers $n^\pm_{F'}$ and $\lambda_j^{F'}$ with the local normal form symplectomorphism  $\Phi_{F'}$ and gluing them together using the partition of unity. This concludes the proof of Theorem \ref{thm:main1}. \\
\qed

The construction of $D_j^\zeta$ in \eqref{eq:defDj} raises the question whether the coefficients $(A^\zeta_j)_\mathrm{top}$ in \eqref{eq:distrib1} depend on the choice of partition of unity when $j>0$. That this is not the case is shown in the following 
\begin{lem}For each $j\in \N_0$, the distribution $\I_j^\zeta\in \D'(M)$ defined by $\I^\zeta_{j}(a):=\int_{\mathscr M^\zeta_\mathrm{top}}\langle D^\zeta_{j} a\rangle_T\d \mathscr M^\zeta_\mathrm{top}$, and consequently the coefficient $(A^\zeta_j)_\mathrm{top}$ in \eqref{eq:distrib1}, is independent of the choice of partition of unity.
\end{lem}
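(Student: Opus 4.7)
The plan is to leverage the uniqueness of the coefficients in the asymptotic expansion \eqref{eq:exp000} together with the fact that the singular contributions $(A^\zeta_j)_\mathrm{sing}$ admit a description that is manifestly independent of the partition of unity.

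First, I observe that the defining integral \eqref{eq:2} for $I^\zeta_{a,\sigma}(\eps)$, hence the distribution $I^\zeta(\eps)\in(\D(M)\otimes\S(\t))'$ itself, makes no reference to any partition of unity. By a standard bootstrap (test the remainder estimate in \eqref{eq:exp000} on pure tensors $a\otimes\sigma$ and iterate in $J_0$), the coefficients $A^\zeta_j\in\D'(M)\otimes\S'(\t)$ are uniquely determined by $I^\zeta(\eps)$ as elements of $\D'(M)\otimes\S'(\t)$. Consequently each $A^\zeta_j$ is independent of the choice of partition $\{\chi_\mathrm{top},\chi_F\}_{F\in\F}$ used in the proof of Theorem \ref{thm:main1}.

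Next, I argue that the singular piece $(A^\zeta_j)_\mathrm{sing}=\sum_{F\in\F:\,\J(F)=\zeta,\,F\cap\supp a\neq\emptyset}A_{j,F}$ from \eqref{eq:distrib1} is likewise intrinsic. Each $A_{j,F}$ in Theorem \ref{thm:main1} is written as an integral over $F$ of $D_{j,F}a$ or $D^\pm_{j,F}a$ --- applied to $a$ itself rather than to $\chi_F a$ --- and these operators are constructed in Corollary \ref{cor:geom1} purely from the local normal form data near $F$ (the symplectomorphism $\Phi_F$, the weights $\lambda^F_j$, and the integers $n^\pm_F$); the cutoff $\chi_F$ may be suppressed precisely because $\chi_F\equiv 1$ in a neighborhood of $F$. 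Hence $(A^\zeta_j)_\mathrm{sing}$ does not depend on the partition of unity.

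Combining the two observations, $(A^\zeta_j)_\mathrm{top}=A^\zeta_j-(A^\zeta_j)_\mathrm{sing}$ is independent of the partition of unity. To conclude, I invoke the explicit formula $(A^\zeta_j)_\mathrm{top}(a\otimes\sigma)=\sigma^{(j)}(0)\,\I^\zeta_j(a)$ from Theorem \ref{thm:main1}, fix any $\sigma_0\in\S(\t)$ with $\sigma_0^{(j)}(0)=1$ (which exists, e.g.\ by rescaling a bump function), and read off $\I^\zeta_j(a)=(A^\zeta_j)_\mathrm{top}(a\otimes\sigma_0)$ for every $a\in\CT(M)$. The only mildly delicate point is the uniqueness of the $A^\zeta_j\in\D'(M)\otimes\S'(\t)$, which requires no topology on this tensor product: evaluating on pure tensors $a\otimes\sigma$ reduces uniqueness to the standard scalar-valued asymptotic uniqueness provided by the remainder estimate in \eqref{eq:exp000}, and bilinearity then determines $A^\zeta_j$ as a bilinear functional on $\D(M)\times\S(\t)$.
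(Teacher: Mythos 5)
Your proposal is correct, but it takes a genuinely different route from the paper. The paper's own argument first observes (via uniqueness of the stationary-phase expansion from Proposition~\ref{prop:asymptreg}) that the \emph{restriction} of $\I^\zeta_j$ to test functions $a$ supported in $M_{(\h_\mathrm{top})}$ is canonical, and then transfers this to arbitrary $a\in\CT(M)$ by a cutoff-and-measure trick: given $\eps>0$, it picks a $T$-invariant cutoff $\chi$ that is $\equiv 1$ near $\supp a\cap M^T$ but whose image in $\mathscr M^\zeta_\mathrm{top}$ has $\d\M^\zeta_\mathrm{top}$-volume smaller than $\eps/(\Vert D^\zeta_j a\Vert_\infty\vol T)$, splits $\I^\zeta_j(a)$ accordingly, and shows that the ambiguous piece is $O(\eps)$ while the rest is supported in $M_{(\h_\mathrm{top})}$. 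You instead argue at the level of the global coefficient $A^\zeta_j$: it is intrinsic by uniqueness of the full asymptotic expansion, the singular part $(A^\zeta_j)_\mathrm{sing}$ is intrinsic because $D_{j,F}$, $D^\pm_{j,F}$ are built from normal-form data alone and $\chi_F$ drops out of the integrals over $F$, and $(A^\zeta_j)_\mathrm{top}$ is the difference. Both proofs hinge on asymptotic uniqueness; your approach avoids the delicate shrinking-cutoff estimate but requires pinning down that the entire singular contribution, not just its leading term, is partition-free --- a fact that is visible in the proof of Corollary~\ref{cor:geom1} but must be invoked explicitly, as you do. The paper's localization argument has the mild advantage of never referring to the $A_{j,F}$ at all, staying entirely within the top-stratum side of the decomposition; your subtraction argument is perhaps conceptually cleaner once the intrinsicality of $(A^\zeta_j)_\mathrm{sing}$ is on the table. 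Both are valid.
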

\begin{proof} Applying Theorem \ref{thm:main1} and Proposition \ref{prop:asymptreg} immediately yields that the restriction of $\I_j^\zeta$ to $M_{(\h_\mathrm{top})}$ is independent of the choice of partition of unity by the uniqueness of the coefficients in the respective asymptotic expansions. Now, given $j\in \N_0$, $\zeta\in \t^\ast$, and $a\in \CT(M)$, choose $\eps>0$ and a $T$-invariant cutoff function $\chi\in \CT(M)$  with $\mathrm{vol}_{\mathscr M^\zeta_\mathrm{top}}((\supp \chi)/T\cap \mathscr M^\zeta_\mathrm{top})<\eps/(\Vert D^\zeta_{j} a\Vert_\infty\vol T)$ and $\chi\equiv 1$ near $\supp a\cap M^T$. Then we have
\bqn
\I_j^\zeta(a)=\I_j^\zeta(\chi a)+ \I_j^\zeta((1-\chi)a)=\int_{\mathscr M^\zeta_\mathrm{top}}\langle\chi D^\zeta_{j} a\rangle_T\d \mathscr M^\zeta_\mathrm{top} + \I_j^\zeta(\chi^\zeta_j a) + \I_j^\zeta((1-\chi)a),
\eqn
where $\chi^\zeta_j\in \CT(M)$ is such that $D^\zeta_{j} (\chi a)=\chi D^\zeta_{j} a + \chi^\zeta_j a$. In particular, $\chi^\zeta_j$ vanishes near $\supp a\cap M^T$. Since $\chi^\zeta_j a$ and $(1-\chi)a$ are supported in $M_{(\h_\mathrm{top})}$, the terms $\I_j^\zeta(\chi^\zeta_j a)$ and $\I_j^\zeta((1-\chi)a)$ are independent of the choice of partition of unity. Finally, the integral of $\langle\chi D^\zeta_{j} a\rangle_T$ over $\mathscr M^\zeta_\mathrm{top}$ is bounded in absolute value by $\eps$. Since $\eps>0$ was arbitrary, $\I_j^\zeta(a)$ does not depend on the choice of partition of unity. 
\end{proof}

\subsection{Proof of Theorem \ref{thm:main2}}\label{sec:proof2} Fix a given $\zeta_0\in \R\cong \t^\ast$. To determine the limit behavior  of each individual term in \eqref{eq:distrib1} as $\zeta\to \zeta_0$ under the two conditions $\pm(\zeta-\zeta_0)>0$, we begin with the summands $A^\zeta_{j,F}(a\otimes \sigma)$, using the same notation as in the proof of Theorem \ref{thm:main1}. Thus, fix an $F\in \F$. 
As long as $\zeta \neq \J(F)$, we  have two options how to express the coefficient $A^\zeta_{j,F}(a\otimes \sigma)$: Either we can observe that $\zeta$ is a regular value of $\J|_{U_F}$,  and invoke the general regular stationary phase asymptotics from Proposition \ref{prop:asymptreg}, as we did in  (3) in the proof of Theorem \ref{thm:main1}. Or we can  apply Proposition \ref{prop:09.03.2020} (if $F$ is indefinite) or \ref{prop:4.3.def} (if $F$ is definite) and Corollary \ref{cor:geom1}, as we did in  (1) and (2) in the proof of Theorem \ref{thm:main1}. By uniqueness of the coefficients in asymptotic expansions, the two approaches do describe the same coefficients. 
However, only the latter approach is useful when $\zeta$ approaches the singular value $\J(F)$ of $\J|_{U_F}$ because the coefficients featured in Propositions \ref{prop:09.03.2020} and \ref{prop:4.3.def} do have a clearly visible limit behavior in this case, in contrast to the less explicit but simpler terms appearing in Proposition \ref{prop:asymptreg}, where no statement on limits towards singular values is made.  We thus distinguish three cases:
 \begin{enumerate}[leftmargin=*]
\item If $\J(F)=\zeta_0$ and $F$ is indefinite, we apply Proposition \ref{prop:09.03.2020} with $\eps$ replaced by $2\eps$, $\zeta$ by $2\zeta_F=2(\zeta-\J(F))$, $\mathscr{S}=\mathscr{S}_{\tilde a_F}$, and $L=\mathrm{codim}\,F/2-2$, together with Corollary \ref{cor:geom1}, yielding
\begin{align*}
\lim_{\substack{\zeta\to \J(F)\\
\pm(\zeta-\J(F))>0}} A^\zeta_{j,F}(a\otimes \sigma)
&=\begin{dcases}\sigma^{(j)}(0)\int_{\mathscr M^{\J(F)}_{\mathrm{top}}}\big<D^\mathrm{top}_{j,F}(\chi_F a)\big>_T\d \mathscr M^{\J(F)}_{\mathrm{top}},\qquad &j< j_F, \\
\sigma^{(j)}(0)\bigg(\int_{\mathscr M^{\J(F)}_{\mathrm{top}}}\big<D^\mathrm{top}_{j,F}(\chi_F a)\big>_T\d \mathscr M^{\J(F)}_{\mathrm{top}}+\int_{F}D^\mp_{j,F} a\d F\bigg), & j\geq j_F.\end{dcases}
\end{align*}
\item If $\J(F)=\zeta_0$ and $F$ is definite, we similarly get from Proposition \ref{prop:4.3.def} and Corollary \ref{cor:geom1}  the result
\begin{align*}
\lim_{\substack{\zeta\to \J(F)\\
\pm(\zeta-\J(F))>0}} A^\zeta_{j,F}(a\otimes \sigma)
&=\begin{dcases}0,\qquad &j< j_F\;\text{ or }\; s_F=\mp, \\
\sigma^{(j)}(0)\int_{F}D_{j,F} a, & j\geq j_F\;\text{ and }\; s_F=\pm.\end{dcases}
\end{align*}
\item If $\J(F)\neq\zeta_0$, then all $\zeta$ close to $\zeta_0$ are regular values of $\J|_{U_F}$, and by Proposition \ref{prop:asymptreg} one has
\bqn
\lim_{\substack{\zeta\to \zeta_0\\
\pm(\zeta-\zeta_0)>0}} A^\zeta_{j,F}(a\otimes \sigma)= A^{\zeta_0}_{j,F}(a\otimes \sigma)=\sigma^{(j)}(0)\int_{\mathscr M^{\zeta_0}_\mathrm{top}}\big<\mathscr D^{\zeta_0}_{j} (\chi_F a)\big>_T\d \mathscr M^{\zeta_0}_\mathrm{top}.
\eqn\end{enumerate}
Finally, it remains to consider the limits of the contributions $A^\zeta_{j,\mathrm{top}}(a\otimes \sigma)$ of the top stratum:
 \begin{enumerate}[leftmargin=*]
 \item[(4)] Since the function $\J|_{M_{\h_\mathrm{top}}}$ has only regular values, Proposition \ref{prop:asymptreg} gives 
\begin{align*}
\lim_{\substack{\zeta\to \zeta_0\\
\pm(\zeta-\zeta_0)>0}} A^\zeta_{j,\mathrm{top}}(a\otimes \sigma)&=A^{\zeta_0}_{j,\mathrm{top}}(a\otimes \sigma)=\sigma^{(j)}(0)\int_{\mathscr M^{\zeta_0}_\mathrm{top}}\big<\mathscr D^{\zeta_0}_{j} (\chi_\mathrm{top} a)\big>_T\d \mathscr M^{\zeta_0}_\mathrm{top}.
\end{align*}
\end{enumerate}
We are now ready to describe the limit behavior of
\bqn
A^\zeta_j(a\otimes\sigma)=A^\zeta_{j, \mathrm{top}}(a\otimes\sigma) + \sum_{F\in \F:F\cap \supp a\neq \emptyset}A^\zeta_{j,F}(a\otimes\sigma)
\eqn
 for each $\zeta\in \t^\ast$. Recalling the definition \eqref{eq:defDj} of the operators $D^\zeta_j$,  a combination of (1)--(4) yields  for each $j\in \N_0$   
\begin{align*}
&\lim_{\substack{\zeta\to \zeta_0\\
\pm(\zeta-\zeta_0)>0}} A^\zeta_j(a\otimes\sigma)=\lim_{\substack{\zeta\to \zeta_0\\
\pm(\zeta-\zeta_0)>0}}A^\zeta_{j,\mathrm{top}}(a\otimes\sigma) + \sum_{\substack{F\in \F: \J(F)\neq {\zeta_0},\\F\cap \supp a\neq \emptyset}}\lim_{\substack{\zeta\to \zeta_0\\
\pm(\zeta-\zeta_0)>0}}A^\zeta_{j,F}(a\otimes\sigma)\\
&\qquad\qquad\qquad\qquad\qquad\qquad\qquad\qquad\qquad\qquad + \sum_{\substack{F\in \F: \J(F)=\zeta_0,\\F\cap \supp a\neq \emptyset}}\lim_{\substack{\zeta\to \J(F)\\
\pm(\zeta-\J(F))>0}}A^\zeta_{j,F}(a\otimes\sigma)\\
&=\sigma^{(j)}(0)\bigg(\int_{\mathscr M^{\zeta_0}_{\mathrm{top}}}\big<D^{\zeta_0}_{j}a\big>_T\d \mathscr M^{\zeta_0}_{\mathrm{top}}+\sum_{\substack{F\in \F: \J(F)=\zeta_0,\\
\mathrm{codim}\,F/2-1\leq j,\\
F \text{ indefinite},\\
F\cap \supp a\neq \emptyset}} \int_{F}D^\mp_{j,F} a\d F+\sum_{\substack{F\in \F: \J(F)=\zeta_0,\\
\mathrm{codim}\,F/2-1\leq j,\\
F \text{ definite},\; s_F=\pm,\\
F\cap \supp a\neq \emptyset}} \int_{F}D_{j,F} a\d F\bigg).
\end{align*}
This concludes the proof of  Theorem \ref{thm:main2}. \\
\qed

\section*{Index of Notation}

In what follows we include a list with the main notation used in this paper, explaining its meaning and specifying the place where it is used first. 

\bigskip

\begin{longtable}{p{0.1\textwidth}p{0.7\textwidth}p{0.2\textwidth}}
 $M$ & A symplectic manifold with symplectic form $\omega$   & p.\ \pageref{Jintrod}\\
 $T$ & The circle group, acting on $M$ in a Hamiltonian fashion   & p.\ \pageref{Jintrod}\\
 $\t$ & The Lie algebra of $T$, identified with $\R$ by fixing a Lebesgue measure   & p.\ \pageref{Jintrod}\\
 $\J$ & The momentum map  $M\to \t^\ast$ & p.\ \pageref{Jintrod}\\
  $J(x)$ & The map $M \rightarrow \R$ given by $\J(p)(x)=J(x)(p)$ & Eq.\ \eqref{eq:duistermaatheckman}\\
$\S(V)$ & The space of Schwartz functions on $V$ & Eq.\ \eqref{eq:2}\\
$\D(M)$ & The space of test functions  $\CT(M)$ with the test function topology & Eq.\ \eqref{eq:Iasigmaeps}\\
$\D'(M)$ & The space of distributions on $M$, identified with the space of distribution densities on $M$ via the symplectic volume form $dM$ & Eq.\ \eqref{eq:exp000}\\
$\M^\zeta$ & The symplectic reduction $\M^\zeta=\J^{-1}(\{\zeta\})/T$ & Eq.\ \eqref{eq:mstrat}\\
$\mathscr M^\zeta_\mathrm{top}$& The top stratum of $\M^\zeta$ & Eq.\ \eqref{eq:mstrat}\\
$\mathscr M^\zeta_\mathrm{sing}$& The singular stratum of $\M^\zeta$ & Eq.\ \eqref{eq:mstrat}\\
$M_{(h_\aleph)}$ & The stratum of $M$ of infinitesimal orbit type $(\h_\aleph)$ &  Eq.\ \eqref{eq:mstrat} f.\\
$F$ & A connected component of the fixed point set $M^T$ & p.\ \pageref{Fintrod}\\
$\F$ & The set of all connected components $F$ of  $M^T$ & p.\ \pageref{Fintrod}\\
$I^\zeta(\varepsilon)$ & The generalized Witten integral & Eq. \eqref{eq:Iasigmaeps} \\
$a$ & A function in $\CT(M)$ & Eq. \eqref{eq:Iasigmaeps} \\
$\sigma$ & A function in $\S(\t)$ & Eq. \eqref{eq:Iasigmaeps} \\
$I_{a,\sigma}^\zeta(\eps)$ & The generalized Witten integral evaluated on $a \otimes \sigma$ & Eq. \eqref{eq:2} \\
$I_{\chi_\mathrm{top}}^\zeta (\varepsilon)$  & A component of the generalized Witten integral & Eq. \eqref{eq:sumdistr} \\ 
$I_{\chi_F}^\zeta (\varepsilon)$  & A component of the generalized Witten integral & Eq. \eqref{eq:sumdistr} \\ 
$A^\zeta_j$ & The $j$-th coefficient in the expansion of $I^\zeta(\varepsilon)$ & Thm. \ref{thm:main1} \\
$\Sigma^{\zetaF}$ & The local model  for the level set $\J^{-1}(\zeta+\J(F))$ & Def. \ref{def:quadrics} \\
$\Sigma^{\zetaF}_\bullet$ & A subset of $\Sigma^{\zetaF}$ & Def. \ref{def:quadrics} \\
$\Sigma^{\zetaF}_\times$ & A subset of $\Sigma^{\zetaF}_\bullet$ & Def. \ref{def:quadrics} \\
$A^\zeta_{j, \mathrm{top}}$ & A  contribution to the coefficient  $A^\zeta_j$ & Sec. \ref{sec:proof} \\
$A^\zeta_{j,F}$ & A contribution to the coefficient  $A^\zeta_j$ & Sec. \ref{sec:proof} \\
$a=\O(r)$ & There is a constant $C> 0$ such that $\lvert a\rvert\leq C r$ & Eq.\ \eqref{eq:bilineartrace} f.\\
$a=\O_x(r)$ & There is a constant $C> 0$, depending on $x$, such that $\lvert a\rvert\leq C r$ & Prop.\ \ref{prop:09.03.2020}
\end{longtable}

\bibliography{bibliography}
\bibliographystyle{amsplain}

\medskip

\end{document}